\documentclass[twoside,leqno,10pt, A4]{amsart}
\usepackage{amsfonts}
\usepackage{amsmath}
\usepackage{amscd}
\usepackage{amssymb}
\usepackage{amsthm}
\usepackage{amsrefs}
\usepackage{latexsym}
\usepackage{mathrsfs}
\usepackage{bbm}
\usepackage{enumerate}
\usepackage{graphicx}
\usepackage{amsfonts}
\usepackage{amsmath}
\usepackage{amscd}
\usepackage{amssymb}
\usepackage{amsthm}
\usepackage{amsrefs}
\usepackage{latexsym}
\usepackage{mathrsfs}
\usepackage{bbm}
\usepackage{amscd}
\usepackage{amssymb}
\usepackage{amsthm}
\usepackage{amsrefs}
\usepackage{latexsym}
\usepackage{mathrsfs}
\usepackage{bbm}
\usepackage{enumerate}
\usepackage{graphicx}
\usepackage{color}
\setlength{\textwidth}{18.2cm}
\setlength{\oddsidemargin}{-0.7cm}
\setlength{\evensidemargin}{-0.7cm}
\setlength{\topmargin}{-0.7cm}
\setlength{\headheight}{0cm}
\setlength{\headsep}{0.5cm}
\setlength{\topskip}{0cm}
\setlength{\textheight}{23.9cm}
\setlength{\footskip}{.5cm}

\begin{document}

\newtheorem{theorem}[subsection]{Theorem}
\newtheorem{proposition}[subsection]{Proposition}
\newtheorem{lemma}[subsection]{Lemma}
\newtheorem{corollary}[subsection]{Corollary}
\newtheorem{conjecture}[subsection]{Conjecture}
\newtheorem{prop}[subsection]{Proposition}
\numberwithin{equation}{section}
\newcommand{\mr}{\ensuremath{\mathbb R}}
\newcommand{\mc}{\ensuremath{\mathbb C}}
\newcommand{\dif}{\mathrm{d}}
\newcommand{\intz}{\mathbb{Z}}
\newcommand{\ratq}{\mathbb{Q}}
\newcommand{\natn}{\mathbb{N}}
\newcommand{\comc}{\mathbb{C}}
\newcommand{\rear}{\mathbb{R}}
\newcommand{\prip}{\mathbb{P}}
\newcommand{\uph}{\mathbb{H}}
\newcommand{\fief}{\mathbb{F}}
\newcommand{\majorarc}{\mathfrak{M}}
\newcommand{\minorarc}{\mathfrak{m}}
\newcommand{\sings}{\mathfrak{S}}
\newcommand{\fA}{\ensuremath{\mathfrak A}}
\newcommand{\mn}{\ensuremath{\mathbb N}}
\newcommand{\mq}{\ensuremath{\mathbb Q}}
\newcommand{\half}{\tfrac{1}{2}}
\newcommand{\f}{f\times \chi}
\newcommand{\summ}{\mathop{{\sum}^{\star}}}
\newcommand{\chiq}{\chi \bmod q}
\newcommand{\chidb}{\chi \bmod db}
\newcommand{\chid}{\chi \bmod d}
\newcommand{\sym}{\text{sym}^2}
\newcommand{\hhalf}{\tfrac{1}{2}}
\newcommand{\sumstar}{\sideset{}{^*}\sum}
\newcommand{\sumprime}{\sideset{}{'}\sum}
\newcommand{\sumprimeprime}{\sideset{}{''}\sum}
\newcommand{\sumflat}{\sideset{}{^\flat}\sum}
\newcommand{\shortmod}{\ensuremath{\negthickspace \negthickspace \negthickspace \pmod}}
\newcommand{\V}{V\left(\frac{nm}{q^2}\right)}
\newcommand{\sumi}{\mathop{{\sum}^{\dagger}}}
\newcommand{\mz}{\ensuremath{\mathbb Z}}
\newcommand{\leg}[2]{\left(\frac{#1}{#2}\right)}
\newcommand{\muK}{\mu_{\omega}}
\newcommand{\thalf}{\tfrac12}
\newcommand{\lp}{\left(}
\newcommand{\rp}{\right)}
\newcommand{\Lam}{\Lambda_{[i]}}
\newcommand{\lam}{\lambda}
\def\L{\fracwithdelims}
\def\om{\omega}
\def\pbar{\overline{\psi}}
\def\phis{\varphi^*}
\def\lam{\lambda}
\def\lbar{\overline{\lambda}}
\newcommand\Sum{\Cal S}
\def\Lam{\Lambda}
\newcommand{\sumtt}{\underset{(d,2)=1}{{\sum}^*}}
\newcommand{\sumt}{\underset{(d,2)=1}{\sum \nolimits^{*}} \widetilde w\left( \frac dX \right) }

\newcommand{\hf}{\tfrac{1}{2}}
\newcommand{\af}{\mathfrak{a}}
\newcommand{\Wf}{\mathcal{W}}

\theoremstyle{plain}
\newtheorem{conj}{Conjecture}
\newtheorem{remark}[subsection]{Remark}

\makeatletter
\def\widebreve{\mathpalette\wide@breve}
\def\wide@breve#1#2{\sbox\z@{$#1#2$}%
     \mathop{\vbox{\m@th\ialign{##\crcr
\kern0.08em\brevefill#1{0.8\wd\z@}\crcr\noalign{\nointerlineskip}%
                    $\hss#1#2\hss$\crcr}}}\limits}
\def\brevefill#1#2{$\m@th\sbox\tw@{$#1($}%
  \hss\resizebox{#2}{\wd\tw@}{\rotatebox[origin=c]{90}{\upshape(}}\hss$}
\makeatletter

\title[Twisted second moment of modular $L$-functions to a fixed modulus]{Twisted second moment of modular $L$-functions to a fixed modulus}

\author[P. Gao]{Peng Gao}
\address{School of Mathematical Sciences, Beihang University, Beijing 100191, China}
\email{penggao@buaa.edu.cn}

\author[L. Zhao]{Liangyi Zhao}
\address{School of Mathematics and Statistics, University of New South Wales, Sydney NSW 2052, Australia}
\email{l.zhao@unsw.edu.au}

\begin{abstract}
We study asymptotically the twisted second moment of the family of modular $L$-functions to a fixed modulus. As an application, we establish sharp lower
 bounds for all real $k \geq 0$ and sharp upper bounds for $k$ in the range $0 \leq k \leq 1$ for the $2k$-th moment of these $L$-functions on the critical line.
\end{abstract}

\maketitle

\noindent {\bf Mathematics Subject Classification (2010)}: 11M06  \newline

\noindent {\bf Keywords}:  twisted second moment, modular $L$-functions, lower bounds, upper bounds

\section{Introduction}
\label{sec 1}

 Let $f$ be a fixed holomorphic Hecke eigenform of even weight $\kappa $ and level $1$. Write the Fourier expansion of
  $f$ at infinity as
\[ f(z) = \sum_{n=1}^{\infty} \lambda_f (n) n^{\frac{\kappa -1}{2}} e(nz), \quad \mbox{where} \quad e(z) = \exp (2 \pi i z).  \]
Throughout the paper, let $q$ be a positive integer such that $q \not \equiv 2 \pmod 4$ (so that primitive characters modulo $q$ exist) and $\chi$ a primitive Dirichlet character modulo $q$.  We write $L(s, f \otimes \chi)$ for the twisted modular $L$-function defined in Section \ref{sec:cusp form}. \newline

We aim to evaluate the twisted second moment of the family of modular $L$-functions to the fixed modulus $q$. In \cite{BM15}, V. Blomer and D. Mili\'{c}evi\'{c} studied the second moments of fixed modular $L$-functions at the central point.  In \cite{BFKMMS}, V. Blomer, E. Fouvry, E. Kowalski, P. Michel, D. Mili\'cevi\'c and W. Sawin investigated the twisted second moment of fixed modular $L$-functions  to a fixed prime modulus at more general points. See \cites{Stefanicki,GKR, CH, BM15, BFKMM, KMS17} for other works on the moments of modular $L$-functions.  The motivation of the present work does not solely emanate from \cites{BFKMMS, BM15}.  We also have an application of the twisted moment to establish sharp bounds for the $2k$-th moment of the corresponding family of modular $L$-functions on the critical line. Here, note that upon applying the upper bounds principle due to M. Radziwi{\l\l} and K. Soundararajan  \cite{Radziwill&Sound} and the lower bounds principle due to W. Heap and K. Soundararajan  \cite{H&Sound}, it is shown in \cite{GHH} that we have 
\begin{align*}
\begin{split}
   \sumstar_{\substack{ \chi \shortmod q }}|L(\tfrac{1}{2}, f \otimes \chi)|^{2k} \gg_k & \phis(q)(\log q)^{k^2}, \quad \text{for all } k \geq 0, \quad \mbox{and} \\
    \sumstar_{\substack{ \chi \shortmod q }}|L(\tfrac{1}{2}, f \otimes \chi)|^{2k} \ll_k & \phis(q)(\log q)^{k^2}, \quad \text{for } 0 \leq k \leq 1,
\end{split}
\end{align*}  
  where $\phis(q)$ denotes the number of primitive characters modulo $q$ and $\sum^{\star}$ the sum over primitive Dirichlet characters
  modulo $q$ throughout the paper. \newline

Furthermore, moments of families of $L$-functions on the critical line also attract much attention. For instance, it is shown by M. Munsch \cite{Munsch17} that upper bounds for the shifted moments of the family of Dirichlet $L$-functions to a fixed modulus can be applied to obtain bounds for moments of character sums. Using  a method of K. Soundararajan \cite{Sound2009} and its refinement by A. J. Harper \cite{Harper} on sharp upper bounds for shifted moments of $L$-functions under the generalized Riemann hypothesis (GRH), improvements of Munsch's results were obtained by B. Szab\'o in \cite{Szab}.  In \cite{G&Zhao24-12}, the authors applied a similar approach to show that under GRH, for a large fixed modulus $q$, any positive integer $k$, real tuples ${\bf a} =  (a_1, \ldots, a_k), {\bf t} =  (t_1, \ldots, t_k)$  such that $a_j \geq 0$ and $|t_j| \leq q^A$ for a fixed positive real number $A$,
\begin{align}
\label{Lprodbounds}
\begin{split}
 \sumstar_{\substack{ \chi \bmod q }} & \big| L\big( \tfrac12+it_1, f \otimes \chi \big) \big|^{a_1} \cdots \big| L\big( \tfrac12+it_{k},f \otimes \chi  \big) \big|^{a_{k}} \\
 &  \hspace*{0.5cm} \ll  \varphi(q)(\log q)^{(a_1^2+\cdots +a_{k}^2)/4} \prod_{1\leq j<l \leq k} \Big|\zeta \Big(1+i(t_j-t_l)+\tfrac 1{\log q} \Big) \cdot L\Big(1+i(t_j-t_l)+\tfrac 1{\log q}, \operatorname{sym}^2 f\Big) \Big|^{a_ja_l/2}.
\end{split}
\end{align}
  where $\varphi$ denotes the Euler totient function,  $\zeta(s)$ is the Riemann zeta function and $L(s, \operatorname{sym}^2 f)$  the symmetric square $L$-function of $f$ defined in Section \ref{sec:cusp form}. \newline

Setting $t_j=t$ in \eqref{Lprodbounds} and applying the bound $\zeta(1+\tfrac 1{\log q}) \ll \log q$ (see \cite[Corollary 1.17]{MVa1}), $L(1+\tfrac 1{\log q}, \operatorname{sym}^2 f) \ll 1$ (see Section \ref{sec:cusp form}), we deduce that under GRH,  for any real $k \geq 0$ and $|t| \leq q^A$ for a fixed positive real number $A$,
\begin{align*}
\begin{split}
\sumstar_{\substack{ \chi \shortmod q }}\big| L\big( \tfrac12+it,\chi \big) \big|^{2k} \ll_{{\bf t}, k} & \  \varphi(q)(\log q)^{k^2}. 
\end{split}
\end{align*}  
We shall establish the above as well as its complementary result on lower bounds for certain ranges of ${\bf t}, k$ and certain $q$ unconditionally.   Our results rely crucially on the following evaluation of the twisted second moment of the family of modular $L$-functions to a fixed modulus $q$.  We shall reserve the letter $p$ for a prime number throughout this paper. 
\begin{theorem}
\label{thmtwistedsecondmoment}
    With the notation as above, let $q \not \equiv 2 \pmod 4$ be a positive integer, $a, b$ be positive integers such that $(a,b)=(ab, q)=1$ and $s_1=\sigma_1+it_1$ and $s_2=\sigma_2+it_2$ with $0<\sigma_1$, $\sigma_2<1$, $t_1, t_2\in \mr$, $s_1+s_2 \neq 1$. Suppose that one of the two conditions are satisfied:

\begin{enumerate}[(i)]
\item \label{case1} There exists a divisor $q_0$ of $q$ such that $q/q_0$ is odd and $q^{\eta} \ll q_0 \ll q^{1/2-\eta}$ for some $0<\eta <1/2$.

\item \label{case2} The integer $q$ is a prime and $\eta=1/144$. 
\end{enumerate}

Suppose moreover corresponding to the above two conditions,
\begin{align}
\label{scondition}
\begin{split}
 (|s_1|+1)(|s_2|+1) \ll &  \left\{
\begin{array}
  [c]{lll}
  q^{19\eta/5-\varepsilon_0} \text{ for some } 0<\varepsilon_0< \frac {19\eta}5 , &  \text{in case} \; \eqref{case1}, \\ \\
 q^{1/4-\varepsilon_0} \text{ for some } 0<\varepsilon_0< \frac {1}4 ,  & \text{in case} \; \eqref{case2}.
 \end{array}
\right. \\
 \max(a, b) \leq & \ q^{1/4}, \; \text{in case} \; \eqref{case2} .
\end{split}
\end{align}
  Then we have, 
\begin{align}
\label{twistedsecondmoment}
\begin{split}
   \sumstar_{\substack{ \chi \bmod q }} & L(s_1, f \otimes \chi)L(s_2, f \otimes \overline \chi) \chi(a)\overline \chi(b) \\
=& \frac{\phis(q)}{b^{s_1}a^{s_2}} \zeta(s_1+s_2)L(s_1+s_2, \operatorname{sym}^2 f)H(s_1+s_2; q, a,b)\\
&+ \Big( \frac{q}{2\pi}\Big)^{2(1-s_1-s_2)}\frac{\phis(q)}{a^{1-s_1}b^{1-s_2}}\frac {\Gamma(\frac{\kappa-1}{2}+1-s_1)\Gamma(\frac{\kappa-1}{2}+1-s_2)}{\Gamma(\frac{\kappa-1}{2}+s_1)\Gamma(\frac{\kappa-1}{2}+s_2)} \\
& \hspace*{1cm} \times \zeta(2-s_1-s_2)L(2-s_1-s_2, \operatorname{sym}^2 f)H(2-s_1-s_2; q, a,b)\\
&+O\Big( \Big(\frac {q^2}{ab} \Big)^{1/4-(\sigma_1+\sigma_2)/2+\varepsilon} \frac{q}{b^{\sigma_1}a^{\sigma_2}}+(|s_1|+1)^{1-2\sigma_1}(|s_2|+1)^{1-2\sigma_2}\frac{q^{1+2(1-\sigma_1-\sigma_2)}}{a^{1-\sigma_1}b^{1-\sigma_2}}\Big(\frac {q^2}{ab}\Big)^{-3/4+(\sigma_1+\sigma_2)/2+\varepsilon} \Big) \\
&+O \Big( q^{\varepsilon}(1+(|s_1|+1)^{1-2\sigma_1}(|s_2|+1)^{1-2\sigma_2}q^{2(1-\sigma_1-\sigma_2)})\big (|s_1|+1)(|s_2|+1)q^{2}\big )^{|\sigma_1-1/2|+|\sigma_2-1/2|}\mathcal{R} \Big), 
\end{split}
\end{align}
  where $H(s; q, a, b)=\displaystyle \prod_pH_p(s; q, a, b)$ with 
\begin{align}
\label{Hp}
\begin{split}
H_p(s; q, a, b) =\left\{
 \begin{array}
  [c]{lll}
  \displaystyle{\Big(1-\frac {\lambda_f^2(p)}{p^{s}} \Big) \Big(1-\frac {\lambda_f(p^2)}{p^s}+\frac {\lambda_f(p^2)}{p^{2s}}-\frac {1}{p^{3s}} \Big)}, & \text{if} \; p |  q, \\ \\
 \displaystyle{ \Big(1-\frac {\lambda_f^2(p)}{p^{s}}\Big) \Big(1-\frac {\lambda_f(p^2)}{p^s}+\frac {\lambda_f(p^2)}{p^{2s}}-\frac {1}{p^{3s}} \Big) \sum_{j \geq 0}\frac {\lambda_f(p^{l+js})\lambda_f(p^{js})}{p^{js}}}, & \text{if } p^l \|  ab, \\ \\
  1 , &\text{otherwise },
 \end{array}
 \right.
 \end{split} 
\end{align}
  and
\begin{align*}
\begin{split}
  \mathcal{R}=&  \left\{
\begin{array}
  [c]{lll}
  q^{1-\eta/100}+q^{1-\varepsilon_0} , &  \text{for case} \; \eqref{case1}, \\ \\
 q^{1-\eta/100},  & \text{for case} \; \eqref{case2}.
 \end{array}
\right.
\end{split}
\end{align*}
\end{theorem}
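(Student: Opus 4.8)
The plan is to follow the standard architecture for twisted second moments: an approximate functional equation, orthogonality of primitive characters, extraction of the diagonal main terms, and estimation of the off-diagonal by Voronoi summation together with bounds for complete exponential sums modulo $q$.

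I would begin from the product functional equation
\[
\Lambda(s_1,f\otimes\chi)\,\Lambda(s_2,f\otimes\overline\chi)=\Lambda(1-s_1,f\otimes\overline\chi)\,\Lambda(1-s_2,f\otimes\chi),
\]
valid because the two root numbers multiply to $\varepsilon(f\otimes\chi)\varepsilon(f\otimes\overline\chi)=1$ (here $\kappa$ is even and $\tau(\chi)\tau(\overline\chi)=\chi(-1)q$), to obtain a balanced two-piece approximate functional equation for $L(s_1,f\otimes\chi)L(s_2,f\otimes\overline\chi)$: a piece $\Sigma_1$ whose two Dirichlet series carry smooth weights of lengths $\ll q(|s_1|+1)$ and $\ll q(|s_2|+1)$, and a dual piece $\Sigma_2$ carrying the factor $(q/2\pi)^{2(1-s_1-s_2)}$ times the $\Gamma$-ratio in \eqref{twistedsecondmoment}, with $s_i$ replaced by $1-s_i$; crucially there are no cross terms. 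Inserting the twist $\chi(a)\overline\chi(b)$, summing over primitive characters via $\sumstar_{\chi\bmod q}=\sum_{d\mid q}\mu(q/d)\sum_{\chi\bmod d}$ and using $\sum_{\chi\bmod d}\chi(x)\overline\chi(y)=\phi(d)$ when $x\equiv y\bmod d$ (coprimality being automatic, since the $L$-sums already run over arguments coprime to $q$), one splits $\Sigma_1$ and $\Sigma_2$ each into a diagonal part ($am=bn$ for $\Sigma_1$, $na=mb$ for $\Sigma_2$) and an off-diagonal part.

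For the diagonal of $\Sigma_1$, writing $m=bt$ and $n=at$ with $(t,q)=1$ collapses the sum to $\phis(q)\,b^{-s_1}a^{-s_2}$ times the Dirichlet series $\sum_{(t,q)=1}\lambda_f(bt)\lambda_f(at)\,t^{-s}$ at $s=s_1+s_2$; by Hecke multiplicativity and the Rankin--Selberg structure this series factors as $\zeta(s)L(s,\operatorname{sym}^2 f)\,H(s;q,a,b)$ with $H$ the finite Euler product \eqref{Hp}, which is holomorphic at $s=s_1+s_2$ because $s_1+s_2\neq1$. Separating the smooth weights by Mellin inversion and taking the residue then produces the first main term (any residue crossed on shifting the contour is of strictly smaller order and is absorbed into the error). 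The diagonal of $\Sigma_2$ is treated identically with $s_i\mapsto1-s_i$ and $a,b$ interchanged, giving the second main term; here $s_1+s_2\neq1$ again avoids the pole of $\zeta(2-s_1-s_2)$.

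The off-diagonal is the crux and, I expect, the main obstacle. For each $d\mid q$ one detects the congruence $am\equiv bn\bmod d$ by additive characters modulo $d$ and applies the Voronoi summation formula for $f$ to both the $m$- and $n$-sums; since the lengths are $\asymp q(|s_i|+1)$ and the modulus divides $q$, the dual sums are short and their coefficients are complete Kloosterman-type sums modulo $q$ against Bessel transforms of the weights. The unbalanced ranges, where one dual variable dominates, are disposed of by a single Voronoi step, the bound $\lambda_f(n)\ll n^\varepsilon$ and elementary counting, and produce the first two $O$-terms of \eqref{twistedsecondmoment}, the second of these being the contribution of $\Sigma_2$---that is, the first with $s_i$ replaced by $1-s_i$ and multiplied by an archimedean factor of size $\asymp q^{2(1-\sigma_1-\sigma_2)}(|s_1|+1)^{1-2\sigma_1}(|s_2|+1)^{1-2\sigma_2}$. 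For the balanced core one needs genuine cancellation in the complete sums modulo $q$, uniformly in $d$ and in the dual variables. In case \eqref{case1} the factorization $q=q_0\cdot(q/q_0)$ splits each such sum by the Chinese Remainder Theorem into a sum modulo $q_0$, bounded by Weil-type estimates, and a Gauss/Ramanujan-type sum modulo $q/q_0$---this is where the oddness of $q/q_0$ enters---which is negligible unless a strong divisibility condition holds, sharply cutting down the remaining summation; balancing the two gives the saving recorded in $\mathcal{R}$ precisely for $q^\eta\ll q_0\ll q^{1/2-\eta}$ and $(|s_1|+1)(|s_2|+1)\ll q^{19\eta/5-\varepsilon_0}$, with $a,b$ unrestricted. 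In case \eqref{case2}, $q$ prime, the complete sums modulo $q$ are trace functions of bounded conductor, and the bounds for their correlations coming from Deligne's Riemann Hypothesis over finite fields through the Fouvry--Kowalski--Michel framework (as used in \cite{BFKMMS}) give the saving; the hypotheses $\max(a,b)\le q^{1/4}$ and $(|s_1|+1)(|s_2|+1)\ll q^{1/4-\varepsilon_0}$ keep the underlying sheaves non-degenerate and the dual lengths admissible. Finally, I would prove the core off-diagonal bound $\ll q^\varepsilon\mathcal{R}$ cleanly on the symmetric line $\sigma_1=\sigma_2=\tfrac12$ and bound it trivially near the edges of the critical strip, then interpolate holomorphically in $s_1$ and $s_2$ by the Phragm\'en--Lindel\"of principle, which produces the factor $\big((|s_1|+1)(|s_2|+1)q^2\big)^{|\sigma_1-1/2|+|\sigma_2-1/2|}$ in the last term. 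The principal difficulties are the uniform bookkeeping of the parameters ($d$, the dual variables, the shifted lengths, and the complex shifts $s_1,s_2$) and securing the two exponential-sum inputs: the Chinese-Remainder factorization argument in case \eqref{case1}, and the non-degeneracy verification for the trace functions in case \eqref{case2}.
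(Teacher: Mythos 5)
Your overall architecture (approximate functional equation with the root-number cancellation $\iota_\chi\iota_{\overline\chi}=1$, orthogonality over primitive characters via M\"obius, diagonal/off-diagonal split with the diagonal Dirichlet series factoring as $\zeta\cdot L(\cdot,\operatorname{sym}^2 f)\cdot H$) matches the paper. There is, however, one concrete gap in the diagonal analysis, and the off-diagonal and interpolation steps diverge from what the paper actually does.

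The gap: you assert that after Mellin inversion, extracting the residue produces the first main term and that ``any residue crossed on shifting the contour is of strictly smaller order and is absorbed into the error.'' This is false and would lead to the wrong asymptotic. When the contour in the integral representation of $F(s_1,s_2)$ (the weighted diagonal Dirichlet series) is moved from $\Re(s)=2$ down to $\Re(s)=1/4-(\sigma_1+\sigma_2)/2+\varepsilon$, one necessarily crosses the simple pole of $\zeta(2s+s_1+s_2)$ at $s=(1-s_1-s_2)/2$. The residue there is of size $\asymp \phis(q)\,(q^2/ab)^{(1-\sigma_1-\sigma_2)/2}/(b^{\sigma_1}a^{\sigma_2})$; on the critical line $\sigma_1=\sigma_2=1/2$ this is $\asymp \phis(q)/\sqrt{ab}$, i.e.\ the same order as the main term itself and far larger than the claimed error $O((q^2/ab)^{-1/4+\varepsilon}\,q/\sqrt{ab})$. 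This contribution is not absorbed; it must be shown to cancel exactly against the analogous residue arising from the dual piece $F(1-s_1,1-s_2)$, after multiplying the latter by the $(q/2\pi)^{2(1-s_1-s_2)}\Gamma$-ratio prefactor. A short computation of the two prefactors shows they differ by a sign (coming from $1/(1-s_1-s_2)$ versus $1/(s_1+s_2-1)$), and this cancellation is precisely what makes the stated two-term main formula correct. Without identifying and executing this cancellation your diagonal computation produces a spurious leading-order term.

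Two smaller remarks on method. For the off-diagonal, the paper does not detect the congruence with additive characters modulo each $d\mid q$ and apply Voronoi twice; for case (i) it reduces $E(A,B)$ to the quantity estimated in \cite{CH}, and for case (ii) (prime $q$) it applies Voronoi once, arrives at Kloosterman sums modulo $q$, and then invokes the shifted-convolution bound from \cite[Proposition~5.5]{BFKMM} as established in \cite[Theorem~1.1]{KMS17}; your CRT-plus-Weil sketch for case (i) is a plausible reading of what lies inside \cite{CH} but is not something your proposal itself secures. Finally, the factor $\big((|s_1|+1)(|s_2|+1)q^2\big)^{|\sigma_1-1/2|+|\sigma_2-1/2|}$ is not obtained by Phragm\'en--Lindel\"of interpolation in the paper; it comes from the elementary pointwise bound $A^{-\sigma_1}B^{-\sigma_2}+(\cdots)A^{\sigma_1-1}B^{\sigma_2-1}\ll(\cdots)A^{|\sigma_1-1/2|}B^{|\sigma_2-1/2|}(AB)^{-1/2}$ inside the dyadic decomposition, so that route is more direct and avoids the delicate verification of hypotheses a genuine Phragm\'en--Lindel\"of argument would require.
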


   We shall establish Theorem \ref{thmtwistedsecondmoment} based on the approach in \cite{BM15}. Note that condition \eqref{case1} in Theorem \ref{thmtwistedsecondmoment} is satisfied when $q=q^n_0$ where $q_0$ is an odd prime and $n \geq 3$. We point that the case of $q$ being a prime in Theorem \ref{thmtwistedsecondmoment} has already been established as a special case of \cite[Theorem 1.18]{BFKMMS}, except the asymptotical formula here is more explicit and $s_1$, $s_2$ more general. Note that the $O$-terms in \eqref{twistedsecondmoment} are not necessarily smaller than the main term in the $q$-aspect. However, if both $s_1$, $s_2$ are on the critical line, \eqref{twistedsecondmoment} does yield a valid asymptotic formula, which we summarize in the following.
\begin{corollary}
\label{cortwistedsecondmoment}. Under the same conditions and notations of Theorem~\ref{thmtwistedsecondmoment}, we have
\begin{align}
\label{twistedsecondmomentsspecial}
\begin{split}
 \sumstar_{\substack{ \chi \bmod q }}& L(\tfrac 12+it_1, f \otimes \chi)L(\tfrac 12+it_2, f \otimes \overline \chi) \chi(a)\overline \chi(b) \\
=& \frac{\phis(q)}{b^{1/2+it_1}a^{1/2+it_2}} \zeta(1+i(t_1+t_2))L(1+i(t_1+t_2), \operatorname{sym}^2 f)H(1+i(t_1+t_2); q, a,b)\\
& \hspace*{0.5cm} + (\frac{q}{2\pi})^{-2i(t_1+t_2)}\frac{\phis(q)}{a^{1/2-it_1}b^{1/2-it_2}}\frac {\Gamma(\frac{\kappa}{2}-it_1)\Gamma(\frac{\kappa}{2}-it_2)}{\Gamma(\frac{\kappa}{2}+it_1)\Gamma(\frac{\kappa}{2}+it_2)} \\
& \hspace*{1cm} \times \zeta(1-i(t_1+t_2))L(1-i(t_1+t_2), \operatorname{sym}^2 f)H(1-i(t_1+t_2); q, a,b)  +O \Big( \Big(\frac {q^2}{ab} \Big)^{-1/4+\varepsilon} \frac{q}{\sqrt{ab}}+ q^{\varepsilon}\mathcal{R} \Big).  
\end{split}
\end{align}
\end{corollary}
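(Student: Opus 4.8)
The plan is to obtain Corollary \ref{cortwistedsecondmoment} as a direct specialization of Theorem \ref{thmtwistedsecondmoment} to the point $s_1 = \tfrac12 + it_1$, $s_2 = \tfrac12 + it_2$, i.e. $\sigma_1 = \sigma_2 = \tfrac12$ (the hypothesis $s_1+s_2\neq 1$, inherited from the theorem, then reads $t_1+t_2\neq 0$, so the zeta factors below are finite). First I would substitute these values into the two main terms of \eqref{twistedsecondmoment}. Since $s_1 + s_2 = 1 + i(t_1+t_2)$ and $2 - s_1 - s_2 = 1 - i(t_1+t_2)$, the arguments of $\zeta$, $L(\cdot, \operatorname{sym}^2 f)$ and $H$ take the stated shapes; the exponent $2(1-s_1-s_2)$ of $q/(2\pi)$ becomes $-2i(t_1+t_2)$; in the ratio of gamma factors one uses $\tfrac{\kappa-1}{2} + 1 - s_j = \tfrac{\kappa}{2} - it_j$ and $\tfrac{\kappa-1}{2} + s_j = \tfrac{\kappa}{2} + it_j$; and $b^{s_1}a^{s_2} = b^{1/2+it_1}a^{1/2+it_2}$, $a^{1-s_1}b^{1-s_2} = a^{1/2-it_1}b^{1/2-it_2}$. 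This reproduces verbatim the two main terms in \eqref{twistedsecondmomentsspecial}.

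The second step is to collapse the two $O$-terms of \eqref{twistedsecondmoment}. In the first one, at $\sigma_1 = \sigma_2 = \tfrac12$ the exponents $1/4 - (\sigma_1+\sigma_2)/2$ and $-3/4 + (\sigma_1+\sigma_2)/2$ both equal $-1/4$, the factors $(|s_1|+1)^{1-2\sigma_1}(|s_2|+1)^{1-2\sigma_2}$ equal $1$, and $q^{1+2(1-\sigma_1-\sigma_2)} = q$; moreover $q/(b^{\sigma_1}a^{\sigma_2}) = q/(a^{1-\sigma_1}b^{1-\sigma_2}) = q/\sqrt{ab}$, so both summands coincide with $(q^2/(ab))^{-1/4+\varepsilon}\, q/\sqrt{ab}$, leaving $O\big((q^2/(ab))^{-1/4+\varepsilon}\, q/\sqrt{ab}\big)$. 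In the second $O$-term, again $(|s_1|+1)^{1-2\sigma_1}(|s_2|+1)^{1-2\sigma_2} = 1$ and $q^{2(1-\sigma_1-\sigma_2)} = 1$, so the bracket $1 + (|s_1|+1)^{1-2\sigma_1}(|s_2|+1)^{1-2\sigma_2}q^{2(1-\sigma_1-\sigma_2)}$ equals $2$, while $|\sigma_1 - 1/2| + |\sigma_2 - 1/2| = 0$ forces $\big((|s_1|+1)(|s_2|+1)q^2\big)^{|\sigma_1-1/2|+|\sigma_2-1/2|} = 1$; hence this term reduces to $O(q^\varepsilon \mathcal{R})$. Adding the two contributions gives the error term $O\big((q^2/(ab))^{-1/4+\varepsilon}\, q/\sqrt{ab} + q^\varepsilon \mathcal{R}\big)$ of \eqref{twistedsecondmomentsspecial}, and combining with the main terms yields the corollary.

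There is no genuine obstacle here beyond careful bookkeeping: the only point worth verifying is that every factor which is potentially large away from the critical line — the powers $(|s_j|+1)^{1-2\sigma_j}$, the power $q^{2(1-\sigma_1-\sigma_2)}$ arising from the functional-equation shift, and the correction factor raised to the power $|\sigma_1-1/2|+|\sigma_2-1/2|$ — degenerates to $1$ (or to a bounded constant) precisely because $\sigma_1 = \sigma_2 = \tfrac12$. This is exactly why the off-critical-line bound of Theorem \ref{thmtwistedsecondmoment} specializes to the clean asymptotic formula \eqref{twistedsecondmomentsspecial} on the critical line.
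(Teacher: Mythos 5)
Your proposal is correct and is exactly what the paper intends: the corollary is presented as an immediate specialization of Theorem~\ref{thmtwistedsecondmoment} to $\sigma_1=\sigma_2=\tfrac12$, and your bookkeeping of the main terms, the gamma ratio, and the collapse of both $O$-terms (in particular the exponents $1/4-(\sigma_1+\sigma_2)/2$ and $-3/4+(\sigma_1+\sigma_2)/2$ both becoming $-1/4$, and the factor $\big((|s_1|+1)(|s_2|+1)q^2\big)^{|\sigma_1-1/2|+|\sigma_2-1/2|}$ degenerating to $1$) is accurate. No gap.
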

 
   We now consider the case $t_1 \neq 0$ and $t_2\rightarrow -t_1$ in \eqref{twistedsecondmomentsspecial}. Note that by \cite[Corollary 1.17]{MVa1}, one has
\begin{align*}
\begin{split}
 \zeta(1+it)=\frac 1{it}+O(1), \quad |t| \leq 1.
\end{split}
\end{align*}        
   
   The above estimations allow us to see that the right-hand side of \eqref{twistedsecondmomentsspecial} remains holomorphic in the process that $t_1 \neq 0$ and $t_2 \rightarrow t_1$.  Moreover,
\begin{align*}
\begin{split}   
   \Big( \frac{q}{2\pi} \big)^{-2i(t_1+t_2)} =& 1-2i(t_1+t_2)\log \Big(\frac{q}{2\pi} \Big) +O((t_1+t_2)^2), \\
   a^{-1/2-it_2}= & a^{-1/2+it_1-i(t_1+t_2)}=a^{-1/2+it_1}(1- i(t_1+t_2)\log a)+O((t_1+t_2)^2), \\ 
   b^{-1/2+it_2}= & b^{-1/2-it_1+i(t_1+t_2)}=b^{-1/2-it_1}(1 +i(t_1+t_2)\log b)+O((t_1+t_2)^2), \quad \mbox{and} \\
   \frac {\Gamma(\frac{\kappa}{2}-it_2)}{\Gamma(\frac{\kappa}{2}+it_2)} =&  \frac {\Gamma(\frac{\kappa}{2}+it_1-i(t_1+t_2))}{\Gamma(\frac{\kappa}{2}-it_1+i(t_1+t_2))} \\
   = & \frac {\Gamma(\frac{\kappa}{2}+it_1)}{\Gamma(\frac{\kappa}{2}-it_1)}\Big (1-i\frac {\Gamma(\frac{\kappa}{2}-it_1)\Gamma'(\frac{\kappa}{2}+it_1)+\Gamma'(\frac{\kappa}{2}-it_1)\Gamma(\frac{\kappa}{2}+it_1)}{\Gamma(\frac{\kappa}{2}+it_1)\Gamma(\frac{\kappa}{2}-it_1)}(t_1+t_2)\Big )   +O((t_1+t_2)^2).
\end{split}
\end{align*}   
  We then derive from the above the following special case $t_2 \rightarrow t_1$ of Corollary \ref{cortwistedsecondmoment}. 
\begin{corollary}
\label{cortwistedsecondmomenttspecial}
Under the same conditions and notations of Theorem~\ref{thmtwistedsecondmoment}, we have
\begin{align}
\label{twistedsecondmomenttspecial}
\begin{split}
 \sumstar_{\substack{ \chi \bmod q }}& \Big |L(\frac 12+it, f \otimes \chi)\Big |^2 \chi(a)\overline \chi(b) \\
=& \frac{\phis(q)}{a^{1/2-it} b^{1/2+it}}L(1, \operatorname{sym}^2 f)H(1; q, a,b) \\
& \hspace*{1cm} \times \Big (2\log \Big(\frac{q}{2\pi}\Big)+2L'(1, \operatorname{sym}^2 f)+2H'(1; q, a,b) + 
\frac {\Gamma'(\frac{\kappa}{2}+it)}{\Gamma(\frac{\kappa}{2}+it)}+\frac {\Gamma'(\frac{\kappa}{2}-it)}{\Gamma(\frac{\kappa}{2}-it)}-\log (ab) \Big )\\
& \hspace*{3cm}+O\Big( \Big(\frac {q^2}{ab} \Big)^{-1/4+\varepsilon} \frac{q}{\sqrt{ab}}+ q^{\varepsilon}\mathcal{R} \Big).  
\end{split}
\end{align}
\end{corollary}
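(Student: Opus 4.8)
The plan is to obtain \eqref{twistedsecondmomenttspecial} from Corollary~\ref{cortwistedsecondmoment} by letting $t_2\to-t_1$ in \eqref{twistedsecondmomentsspecial}. The right-hand side of \eqref{twistedsecondmomentsspecial} is not literally defined on the locus $s_1+s_2=1$, since each of its two main terms has there a simple pole coming from the corresponding $\zeta$-factor; however the sum of the two main terms extends holomorphically across that locus, while the character sum on the left is entire in $t_2$, and the $O$-term carries no $t_1,t_2$-dependence once $\sigma_1=\sigma_2=\tfrac12$. First I would record that, the $\lambda_f(n)$ being real, one has $\overline{L(s,f\otimes\chi)}=L(\overline s,f\otimes\overline\chi)$ by analytic continuation, whence
\[
\big|L(\tfrac12+it,f\otimes\chi)\big|^2=L(\tfrac12+it,f\otimes\chi)\,L(\tfrac12-it,f\otimes\overline\chi),
\]
so the sum in \eqref{twistedsecondmomenttspecial} is the left-hand side of \eqref{twistedsecondmomentsspecial} at $t_1=t$, $t_2=-t$. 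Since the size hypotheses \eqref{scondition} are open conditions that are assumed to hold at this limiting tuple, \eqref{twistedsecondmomentsspecial} remains valid, with the same error term, for all $t_2$ in a punctured neighbourhood of $-t$.

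Next I would expand both main terms of \eqref{twistedsecondmomentsspecial} as functions of $w:=t_1+t_2$ near $w=0$, combining $\zeta(1\pm iw)=\pm\tfrac{1}{iw}+\gamma+O(w)$, the Taylor expansions $L(1\pm iw,\operatorname{sym}^2 f)=L(1,\operatorname{sym}^2 f)\pm iw\,L'(1,\operatorname{sym}^2 f)+O(w^2)$ and $H(1\pm iw;q,a,b)=H(1;q,a,b)\pm iw\,H'(1;q,a,b)+O(w^2)$, and the expansions of $(q/2\pi)^{-2iw}$, $a^{-1/2-it_2}$, $b^{-1/2+it_2}$ and $\Gamma(\tfrac{\kappa}{2}-it_2)/\Gamma(\tfrac{\kappa}{2}+it_2)$ already recorded in the text preceding the statement. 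At $w=0$ one has $(q/2\pi)^{-2iw}=1$, $\Gamma(\tfrac{\kappa}{2}-it_2)/\Gamma(\tfrac{\kappa}{2}+it_2)=1$ and $a^{-1/2-it_2}b^{-1/2+it_2}=a^{-1/2+it}b^{-1/2-it}$, so the two main terms have simple poles at $w=0$ with equal and opposite residues; hence their sum is holomorphic at $w=0$, and its value there is obtained by collecting the $w^0$-coefficients. The simple pole $\tfrac{1}{iw}$ of the $\zeta$-factor, multiplied against the linear parts of $(q/2\pi)^{-2iw}$, $a^{-1/2-it_2}$, $b^{-1/2+it_2}$ and the $\Gamma$-quotient, produces the entries $2\log(q/2\pi)$, $-\log(ab)$ and $\tfrac{\Gamma'}{\Gamma}(\tfrac{\kappa}{2}+it)+\tfrac{\Gamma'}{\Gamma}(\tfrac{\kappa}{2}-it)$, and multiplied against the linear parts of $L(\cdot,\operatorname{sym}^2 f)$ and $H(\cdot;q,a,b)$ it produces $2L'(1,\operatorname{sym}^2 f)$ and $2H'(1;q,a,b)$; after factoring out $\phis(q)\,a^{-1/2+it}b^{-1/2-it}L(1,\operatorname{sym}^2 f)H(1;q,a,b)$ one reads off the bracketed main term of \eqref{twistedsecondmomenttspecial}.

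It then remains to pass to the limit. For $t_2\neq-t$ near $-t$, \eqref{twistedsecondmomentsspecial} reads $S(t_2)=N(t_2)+E(t_2)$, where $S(t_2)$ is the character sum on its left-hand side, an entire function of $t_2$; $N(t_2)$ is the sum of the two main terms, holomorphic at $-t$ by the previous paragraph; and $|E(t_2)|\ll\big(\tfrac{q^2}{ab}\big)^{-1/4+\varepsilon}\tfrac{q}{\sqrt{ab}}+q^\varepsilon\mathcal{R}$ uniformly in $t_2$. Letting $t_2\to-t$ and using the continuity of $S$ and $N$ at $-t$ yields \eqref{twistedsecondmomenttspecial} with the stated error term.

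I do not anticipate a genuine obstacle: once Corollary~\ref{cortwistedsecondmoment} is in hand, this is a routine removable-singularity argument. The only point demanding some care is the differentiation of $\Gamma(\tfrac{\kappa}{2}-it_2)/\Gamma(\tfrac{\kappa}{2}+it_2)$ at $t_2=-t$, which is precisely what supplies the digamma terms; this, along with the other linear expansions, has essentially been carried out already in the lines before the statement, so what remains is to assemble the pieces and verify the cancellation of the polar parts.
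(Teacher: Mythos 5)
Your proposal follows essentially the same route as the paper: with $t_1=t$ fixed, send $t_2\to-t$ in \eqref{twistedsecondmomentsspecial}, observe that the two simple poles (from the $\zeta$-factors) at $s_1+s_2=1$ cancel, and read off the $w^0$-coefficient ($w:=t_1+t_2$) using the Taylor expansions of $(q/2\pi)^{-2iw}$, $a^{-1/2-it_2}$, $b^{-1/2+it_2}$ and the $\Gamma$-quotient that the paper records just before the corollary. The error term is uniform in $t_2$ near $-t$ (since $\sigma_1=\sigma_2=1/2$ kills the $|s_j|$-dependent factors), so the limit passes to the error as you say.

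One caveat worth flagging: you correctly invoke $\zeta(1\pm iw)=\pm\tfrac{1}{iw}+\gamma+O(w)$, but when enumerating the constant-term contributions you list only the pole-times-linear products; the constant $\gamma$ from each $\zeta$-factor, paired with the leading $1$'s of the other factors, contributes an extra $+2\gamma$ inside the bracket, which neither your summary nor the paper's stated formula (which only records $\zeta(1+it)=\tfrac{1}{it}+O(1)$) displays. This is lower order compared to $2\log(q/2\pi)$ and harmless for the subsequent applications, but a literal execution of your computation produces it. Likewise, since $L(1,\operatorname{sym}^2 f)H(1;q,a,b)$ has been factored out, the terms you (following the paper) write as $2L'(1,\operatorname{sym}^2 f)$ and $2H'(1;q,a,b)$ inside the bracket should be read as the logarithmic derivatives $2L'/L(1,\operatorname{sym}^2 f)$ and $2H'/H(1;q,a,b)$.
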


Using Corollary \ref{cortwistedsecondmomenttspecial}, we establish sharp bounds for the $2k$-th moment of the family of modular $L$-functions to a fixed modulus on the critical line.  The lower bounds are as follows.
\begin{theorem}
\label{thmlowerbound}
With the notation as above, suppose that one of the two conditions are satisfied:

\begin{enumerate}[(i)]
\item \label{casei} There exists a divisor $q_0$ of $q$ such that $q/q_0$ is odd and $q^{\eta} \ll q_0 \ll q^{1/2-\eta}$ for some $0<\eta <1/2$.

\item \label{caseii} The integer $q$ is a prime and $\eta=1/144$. 
\end{enumerate}
  
  Let $t \in \mr$ such that corresponding to the above two conditions, 
\begin{align}
\label{scondition1}
\begin{split}
 & |t| \ll \left\{
\begin{array}
  [c]{lll}
  q^{19\eta/10-\varepsilon_0} \text{ for some } 0<\varepsilon_0< \frac {19\eta}5 , & \text{in case} \; \eqref{casei}, \\
 q^{1/8-\varepsilon_0} \text{ for some } 0<\varepsilon_0< \frac {1}4 ,  & \text{in case} \; \eqref{caseii}.
 \end{array}
\right.
\end{split}
\end{align}
 
 Suppose moreover that for a fixed $\varepsilon>0$,
\begin{align}
\label{pcondition}
 \sum_{\substack{p|q \\ p \leq q^{\varepsilon}}}\frac {\lambda_f^2(p)}p \ll 1.
\end{align}
 Then for any real number $k \geq 0$, we have
\begin{align*}
   \sumstar_{\substack{ \chi \bmod q }}|L(\tfrac{1}{2}+it, f \otimes \chi)|^{2k} \gg_k \phis(q)(\log q)^{k^2}.
\end{align*}
\end{theorem}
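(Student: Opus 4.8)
The plan is to follow the lower bounds principle of Heap and Soundararajan \cite{H&Sound}, using Corollary \ref{cortwistedsecondmomenttspecial} as the twisted second moment input; this parallels the treatment of the case $t=0$ in \cite{GHH}, now with the shift $it$ carried through, and one should note at the outset that \eqref{scondition1} is exactly what makes Corollary \ref{cortwistedsecondmomenttspecial} applicable with $s_1=\tfrac12+it$, $s_2=\tfrac12-it$, since then $(|s_1|+1)(|s_2|+1)\asymp(1+|t|)^2$. The mollifier is built in the standard way: fix a large constant, split the primes $p\nmid q$ with $p\le q$ into ranges $I_1,\dots,I_K$ of the form $I_j=(q^{\theta_{j+1}},q^{\theta_j}]$ with $\theta_1$ small and the $\theta_j$ decreasing geometrically, put $\mathcal P_j(\chi)=\sum_{p\in I_j}\lambda_f(p)\chi(p)p^{-1/2-it}$, and for real $\beta$ set $\mathcal N_j^{[\beta]}(\chi)=\sum_{0\le m\le\ell_j}\tfrac1{m!}(\beta\mathcal P_j(\chi))^m$ with truncation levels $\ell_j$ chosen so that $q^{\ell_j\theta_j}$ decays geometrically, and $\mathcal N^{[\beta]}(\chi)=\prod_{j=1}^K\mathcal N_j^{[\beta]}(\chi)$. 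This is a Dirichlet polynomial of length $\le q^{\delta}$ for $\delta$ as small as we please, in particular $\le q^{1/4}$, so that after expanding $|\mathcal N^{[\beta]}(\chi)|^2=\sum_{a,b}\alpha_a\overline{\alpha_b}\chi(a)\overline\chi(b)$ and replacing the pair $(a,b)$ by $(a/(a,b),\,b/(a,b))$ we may invoke Corollary \ref{cortwistedsecondmomenttspecial} term by term; hypothesis \eqref{pcondition} ensures that discarding the primes dividing $q$ changes the arithmetic sums below only by a bounded factor.

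I would then record three estimates. (a) By character orthogonality (valid since $\mathcal N^{[\beta]}$ is short) together with the standard fact that $\mathcal N_j^{[\beta]}(\chi)$ agrees with $\exp(\beta\mathcal P_j(\chi))$ outside a set of $\chi$ of negligible proportion, one has $\sumstar_\chi|\mathcal N^{[\beta]}(\chi)|^2\asymp\phis(q)(\log q)^{\beta^2}$ and, for fixed $m>0$ (absorbing $m$ into the truncation levels), $\sumstar_\chi|\mathcal N^{[\beta]}(\chi)|^{2m}\ll_m\phis(q)(\log q)^{m^2\beta^2}$. (b) Inserting $|\mathcal N^{[\beta]}(\chi)|^2$ into Corollary \ref{cortwistedsecondmomenttspecial} as above, the main term is $\asymp\phis(q)(\log q)^{(\beta+1)^2}$: the residue at $s=1$ of $\zeta(s)L(s,\operatorname{sym}^2 f)H(s;q,a,b)$ contributes a factor $\log q$, while the double sum over $a,b$ of the mollifier coefficients against $H(1;q,a,b)$ contributes $(\log q)^{\beta^2+2\beta}$ — evaluated using $|\lambda_f(p)|\le 2$ and $\sum_{p\le q}\lambda_f^2(p)/p=\log\log q+O(1)$, the $\beta^2$ coming from the pairs $(a,b)$ with $p\mid(a,b)$. (c) One also has the twisted first moment $\sumstar_\chi L(\tfrac12+it,f\otimes\chi)\mathcal N^{[\beta]}(\chi)=\phis(q)(1+o(1))$, from the approximate functional equation for $L(s,f\otimes\chi)$ and orthogonality, the dual and off-diagonal contributions being absorbed via Weil's bound for Kloosterman sums because the mollifier is short.

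For $k>1$ I would take $\beta=k-1$, so that (b) gives $\sumstar_\chi|L(\tfrac12+it,f\otimes\chi)|^2|\mathcal N^{[k-1]}(\chi)|^2\asymp\phis(q)(\log q)^{k^2}$; Hölder's inequality in the form $\sumstar_\chi|L|^2|\mathcal N^{[k-1]}|^2\le\big(\sumstar_\chi|L|^{2k}\big)^{1/k}\big(\sumstar_\chi|\mathcal N^{[k-1]}|^{2k/(k-1)}\big)^{(k-1)/k}$, combined with the case $m=k/(k-1)$ of (a) (which contributes $(\log q)^{k^2}$ after raising to the power $k-1$, since $(k/(k-1))^2(k-1)^2=k^2$), then gives $\sumstar_\chi|L|^{2k}\gg_k\phis(q)(\log q)^{k^2}$. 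The case $k=1$ is immediate from Corollary \ref{cortwistedsecondmomenttspecial} with $a=b=1$, and $k=0$ is trivial. For $0\le k<1$ the Hölder inequality runs the wrong way, and one argues instead as in \cite{H&Sound}: with $\beta=k-1$ one shows that the contribution to $\sumstar_\chi|L\,\mathcal N^{[k-1]}|^2$ from the exceptional $\chi$, where $\mathcal N^{[k-1]}(\chi)$ fails to approximate $L(\tfrac12+it,f\otimes\chi)^{k-1}$, is $o\big(\phis(q)(\log q)^{k^2}\big)$, using that the exceptional set is thin and that $L$ is not large there, so that on the complement $|L|^{2k}=|L\,\mathcal N^{[k-1]}|^2(1+o(1))$; estimate (c) enters in bounding the exceptional contribution, and the argument runs parallel to \cite{GHH}.

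The step I expect to be the main obstacle is the uniform control of the accumulated error: Corollary \ref{cortwistedsecondmomenttspecial} is applied to $\asymp q^{o(1)}$ pairs $(a,b)$, and, weighted by the mollifier coefficients, the total of the error terms $\big(\tfrac{q^2}{ab}\big)^{-1/4+\varepsilon}\tfrac{q}{\sqrt{ab}}+q^{\varepsilon}\mathcal R$ must come out $o\big(\phis(q)(\log q)^{k^2}\big)$. This forces the length of $\mathcal N^{[\beta]}$ below a fixed small power of $q$, and is where \eqref{scondition1} is used (so that \eqref{scondition} holds for $s_1=\tfrac12+it$, $s_2=\tfrac12-it$), together with $q^{\varepsilon}\mathcal R=o(\phis(q))$, valid because $\phis(q)\gg q^{1-\varepsilon}$ and $\mathcal R\ll q^{1-\eta/100}+q^{1-\varepsilon_0}$ in the admissible ranges of $\eta$. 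A secondary, standard, technical point is the uniform passage between the truncated exponentials $\mathcal N_j^{[\beta]}$ and genuine exponentials in (a)–(c).
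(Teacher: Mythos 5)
Your proposal follows a genuinely different route from the paper, and in its present form the case $0<k<1$ has a real gap.

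The paper implements the Heap--Soundararajan lower bounds principle: the central computable quantity is the twisted \emph{first} moment $\sumstar_\chi L(\tfrac12+it,f\otimes\chi)\,\mathcal N(t,\chi,k-1)\,\mathcal N(-t,\overline\chi,k)$, with \emph{two} mollifiers approximating $L^{k-1}$ and $\overline L^{\,k}$ respectively, so that the product heuristically matches $|L|^{2k}$ and Proposition \ref{Prop4} yields the lower bound $\gg\phis(q)(\log q)^{k^2}$ from the approximate functional equation and orthogonality alone, without invoking the twisted second moment.  Lemma \ref{lem1} then supplies the Hölder step: for $k>1$ a two-factor inequality, and for $0<k<1$ a three-factor inequality with exponents $2$, $2/(1-k)$, $2/k$ whose middle factor is the mollified second moment controlled by Proposition \ref{Prop5} (an \emph{upper} bound only, which is all that is required).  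You instead take as the central object the mollified second moment $\sumstar_\chi |L(\tfrac12+it,f\otimes\chi)\,\mathcal N^{[k-1]}(\chi)|^2$ with a \emph{single} mollifier, in the Radziwi\l\l--Soundararajan style.  For $k>1$ your Hölder step is arithmetically correct and does give the lower bound, but note that it requires a genuine \emph{two-sided} asymptotic for the mollified second moment, not merely the upper bound that the paper proves in Proposition \ref{Prop5}; establishing the matching lower bound (i.e.\ that the main term of \eqref{LNsquaresum} dominates and is $\gg\phis(q)(\log q)^{k^2}$) is extra work that the paper's first-moment route avoids.

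The more substantive issue is your treatment of $0<k<1$.  You assert that ``the Hölder inequality runs the wrong way'' and resort to an exceptional-set argument, claiming that away from a thin set $|L|^{2k}=|L\,\mathcal N^{[k-1]}|^2(1+o(1))$.  This is not how Heap--Soundararajan handle $0<k<1$: the point of their lower bounds principle is precisely that a three-factor Hölder inequality, of the shape appearing in Lemma \ref{lem1}, succeeds for all $k>0$ and makes any exceptional-set analysis unnecessary.  As written, your exceptional-set step is a gap: to make it rigorous one must quantify both the measure of the set where $\mathcal N^{[k-1]}(\chi)$ fails to approximate $L(\tfrac12+it,f\otimes\chi)^{k-1}$ \emph{and} bound $|L|$ on that set; ``the exceptional set is thin and $L$ is not large there'' is not an argument, and your estimate (c), a plain twisted first moment with a single mollifier giving $\phis(q)(1+o(1))$, is not by itself enough to supply the control you need.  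The paper's Lemma \ref{lem1} together with Propositions \ref{Prop4}--\ref{Prop6} delivers the $0<k<1$ case cleanly, with the twisted second moment (Corollary \ref{cortwistedsecondmomenttspecial}) entering only through the upper bound of Proposition \ref{Prop5}.  Your identification of the range restriction \eqref{scondition1} as the hypothesis that activates Corollary \ref{cortwistedsecondmomenttspecial} for $s_1=\tfrac12+it$, $s_2=\tfrac12-it$ is correct and matches the paper.
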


Now we have the following upper bound.
\begin{theorem}
\label{thmupperbound}
Under the same notations and conditions of Theorem~\ref{thmlowerbound}.  For any real number $k$ such that $0 \leq k \leq 1$, we have
\begin{align*}
   \sumstar_{\substack{ \chi \bmod q }}|L(\tfrac{1}{2}+it,f \otimes \chi)|^{2k} \ll_k \phis(q)(\log q)^{k^2}.
\end{align*}
\end{theorem}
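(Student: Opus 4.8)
The plan is to deduce Theorem~\ref{thmupperbound} from Corollary~\ref{cortwistedsecondmomenttspecial} by the upper bounds principle of M.~Radziwi{\l\l} and K.~Soundararajan \cite{Radziwill&Sound}, implemented for the present family in the style of \cite{GHH}. One may assume $0<k\leq 1$; the case $k=0$ is trivial, and $k=1$ is Corollary~\ref{cortwistedsecondmomenttspecial} with $a=b=1$. Fix a small $\theta_0=\theta_0(\eta,\varepsilon_0)>0$ and split the primes $p\leq q^{\theta_0}$ into consecutive blocks $\mathcal I_0,\ldots,\mathcal I_M$ with doubly exponentially growing endpoints and $M\asymp\log\log\log q$; put $V_j=\sum_{p\in\mathcal I_j}\lambda_f^2(p)/p$, so $\sum_j V_j=\log\log q+O(1)$. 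For each primitive $\chi\bmod q$ set
\[
 \mathcal P_j(\chi)=\sum_{p\in\mathcal I_j}\frac{\lambda_f(p)\chi(p)}{p^{1/2+it}}\,w_j(p),\qquad
 \mathcal M_j(\chi)=\sum_{0\leq r\leq \nu_j}\frac{(k-1)^r}{r!}\,\mathcal P_j(\chi)^{r},
\]
with $w_j$ a smooth cutoff supported on $\mathcal I_j$ and $\nu_j\asymp V_j^{3/4}$ chosen so that $\mathcal M(\chi)=\prod_j\mathcal M_j(\chi)$ is a Dirichlet polynomial in $\chi$ of length $q^{\theta_0+o(1)}$; thus $\mathcal M(\chi)$ is a short mollifier modelling $L(\tfrac12+it,f\otimes\chi)^{k-1}$, the truncated exponentials remaining meaningful even though $k-1\leq 0$. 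I would take $\theta_0$ small enough in terms of $\eta$ and $\varepsilon_0$ --- and $\leq 1/4$ in case \eqref{caseii} --- that, uniformly for positive integers $a,b$ with $ab\leq q^{\theta_0}$, every error term of \eqref{twistedsecondmomentsspecial} applied with $s_1=\tfrac12+it$, $s_2=\tfrac12-it$ is $O(\phis(q)q^{-\delta})$ for a fixed $\delta>\theta_0$; inspection of $\mathcal R$ shows this forces $\theta_0$ to be a small multiple of $\eta$ in case \eqref{casei}, and the hypotheses \eqref{scondition} reduce, for this $s_1,s_2$, to precisely the range \eqref{scondition1} of $|t|$. Since $\theta_0>0$ is fixed, one still has $\sum_{p\leq q^{\theta_0}}\lambda_f^2(p)/p=\log\log q+O_k(1)$, so Euler products over $p\leq q^{\theta_0}$ reach the full power $(\log q)^{k^2}$.

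Next I would partition $\sumstar_{\chi\bmod q}$ by the block structure: let $\mathcal T_{-1}$ consist of the primitive $\chi\bmod q$ with $|\mathcal P_j(\chi)|\leq V_j^{3/4}$ for all $j$, and for $0\leq j\leq M$ let $\mathcal T_j$ consist of those $\chi$ for which $j$ is the largest index with $|\mathcal P_j(\chi)|>V_j^{3/4}$, so $\sumstar_{\chi\bmod q}=\sum_{\chi\in\mathcal T_{-1}}+\sum_{j=0}^{M}\sum_{\chi\in\mathcal T_j}$. On the typical set $\mathcal T_{-1}$ I would use the identity $|L(\tfrac12+it,f\otimes\chi)|^{2k}=\big|L(\tfrac12+it,f\otimes\chi)\mathcal M(\chi)\big|^{2k}|\mathcal M(\chi)|^{-2k}$ and H\"older's inequality with exponents $1/k$ and $1/(1-k)$, reducing to
\[
 \Big(\sumstar_{\chi\bmod q}\big|L(\tfrac12+it,f\otimes\chi)\,\mathcal M(\chi)\big|^{2}\Big)^{k}\;\Big(\sum_{\chi\in\mathcal T_{-1}}|\mathcal M(\chi)|^{-2k/(1-k)}\Big)^{1-k}.
\]
Writing $\mathcal M(\chi)=\sum_{n\leq q^{\theta_0+o(1)}}b_{\mathcal M}(n)\chi(n)n^{-1/2-it}$ turns the inner sum of the first factor into a bilinear form in the twisted second moments $\sumstar_{\chi}|L(\tfrac12+it,f\otimes\chi)|^{2}\chi(n)\overline\chi(m)$, which Corollary~\ref{cortwistedsecondmomenttspecial} evaluates with $a=n$, $b=m$: the two main terms there, assembled against the coefficients $b_{\mathcal M}(n)$ in the standard mollifier computation, contribute $\ll_k\phis(q)(\log q)^{k^2}$, while the error terms contribute $O(\phis(q)q^{\theta_0-\delta})=o(\phis(q))$. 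For the second factor, on $\mathcal T_{-1}$ each $\mathcal M_j(\chi)$ lies within a bounded factor of $\exp\big((k-1)\mathcal P_j(\chi)\big)$, so $|\mathcal M(\chi)|^{-2k/(1-k)}\ll\exp\big(2k\,\Re\sum_j\mathcal P_j(\chi)\big)$, and the mean of the right-hand side over all $\chi$ is computed by orthogonality of primitive characters (the underlying Dirichlet polynomial having length $q^{o(1)}$, only the diagonal survives), giving $\ll_k\phis(q)(\log q)^{k^2}$; it is here that condition \eqref{pcondition} enters, guaranteeing that deleting the primes $p\mid q$ from the relevant Euler product changes it only by a bounded factor. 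Multiplying the two bounds gives $\sum_{\chi\in\mathcal T_{-1}}|L(\tfrac12+it,f\otimes\chi)|^{2k}\ll_k\phis(q)(\log q)^{k^2}$.

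For each bad set $\mathcal T_j$ I would instead mollify only over the higher blocks, $\mathcal M^{(j)}(\chi)=\prod_{i>j}\mathcal M_i(\chi)$ (all of whose $\mathcal P_i$ are below threshold on $\mathcal T_j$), and exploit that $|\mathcal P_j(\chi)|V_j^{-3/4}>1$ there to insert a large even power of this ratio:
\[
 |L(\tfrac12+it,f\otimes\chi)|^{2k}\leq\big|L(\tfrac12+it,f\otimes\chi)\mathcal M^{(j)}(\chi)\big|^{2k}\,|\mathcal M^{(j)}(\chi)|^{-2k}\,\big(|\mathcal P_j(\chi)|V_j^{-3/4}\big)^{2\mu_j}.
\]
H\"older with three exponents then splits the $\mathcal T_j$-sum into a mollified second moment (bounded again via Corollary~\ref{cortwistedsecondmomenttspecial}, now with the shorter $\mathcal M^{(j)}$), a mean value of a pure Dirichlet polynomial evaluated by orthogonality, and a high moment $\sumstar_{\chi}|\mathcal P_j(\chi)|^{2\ell_j}$ of the $j$-th piece, also evaluated by orthogonality; the last, multiplied by the normalising power $V_j^{-3\ell_j/2}$, is $\ll\phis(q)c^{\mu_j}$ with $c<1$ --- small enough that, after summing the $O(M)$ contributions, the total over all $\mathcal T_j$ is again $\ll_k\phis(q)(\log q)^{k^2}$.

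The main obstacle is the exponent bookkeeping across the multi-scale decomposition --- choosing the blocks $\mathcal I_j$, the thresholds $V_j^{3/4}$, the truncation lengths $\nu_j$, and the inserted powers $2\mu_j,2\ell_j$ so that the sub-Gaussian tails of the $\mathcal P_j$ (quantified by the orthogonality estimates for $\sumstar_{\chi}|\mathcal P_j|^{2\ell}$) interlock exactly with the gains coming from the mollified second moments --- which is precisely the delicate core of the Radziwi{\l\l}--Soundararajan argument and must be carried through carefully to cover the full interval $0\leq k\leq 1$ rather than a subinterval. A secondary technical point is uniformity: Corollary~\ref{cortwistedsecondmomenttspecial} has to be invoked for every pair $a=n$, $b=m$ up to the mollifier length $q^{\theta_0}$, which is why $|t|$ is constrained as in \eqref{scondition1} and, in case \eqref{caseii}, why $\theta_0\leq 1/4$; one must also check that the admissible $\theta_0$ still leaves enough primes below $q^{\theta_0}$ for the Euler products to produce the full power of $\log q$.
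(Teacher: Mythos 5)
Your plan is essentially the paper's proof: the Radziwi\l\l--Soundararajan upper bounds principle implemented with truncated-exponential mollifiers over doubly-exponentially growing prime blocks, feeding the resulting mollified second moments into Corollary~\ref{cortwistedsecondmomenttspecial} (with $a,b$ ranging over the mollifier support, $\theta_0$ taken small in terms of $\eta$ and $\varepsilon_0$, and condition~\eqref{pcondition} controlling the $p\mid q$ Euler factors), which is exactly Lemma~\ref{lem2} combined with Propositions~\ref{Prop5} and~\ref{Prop6}. The only presentational difference is that the paper, following \cite{GHH}, encodes your partition into $\mathcal T_{-1},\mathcal T_0,\ldots$ algebraically via the $\sum_v\big(\prod_{j\le v}|\mathcal N_j|^2\big)|\mathcal Q_{v+1}|^2$ structure, and replaces the inverse mollifier $|\mathcal M(\chi)|^{-2k/(1-k)}$ in your H\"older step by the forward polynomial $\prod_j|\mathcal N_j(t,\chi,k)|^2$ (exploiting $-2k(k-1)/(1-k)=2k$), thereby avoiding inverting Dirichlet polynomials altogether; these are equivalent formulations of the same argument.
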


Combining Theorems \ref{thmlowerbound} and \ref{thmupperbound} leads to the following result.
\begin{theorem}
\label{thmorderofmag}
Under the same notations and conditions of Theorem~\ref{thmlowerbound}.  Then for any real number $k$ such that $0 \leq k \leq 1$, we have
\begin{align*}
   \sumstar_{\substack{ \chi \bmod q }}|L(\tfrac{1}{2}+it, f \otimes \chi)|^{2k} \asymp \phis(q)(\log q)^{k^2}.
\end{align*}
\end{theorem}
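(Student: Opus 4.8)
The plan is to deduce Theorem~\ref{thmorderofmag} by combining the lower bound of Theorem~\ref{thmlowerbound} with the upper bound of Theorem~\ref{thmupperbound}. Since both theorems are stated under the identical hypotheses (those of Theorem~\ref{thmlowerbound}: one of the two conditions \eqref{casei} or \eqref{caseii} on $q$ and $\eta$, the constraint \eqref{scondition1} on $|t|$, and the prime-divisor condition \eqref{pcondition}), there is nothing to reconcile between the two sets of assumptions. For any real $k$ with $0 \leq k \leq 1$, Theorem~\ref{thmlowerbound} applies (it is valid for all real $k \geq 0$, in particular for $0 \leq k \leq 1$) and gives
\begin{align*}
\sumstar_{\substack{ \chi \bmod q }}|L(\tfrac{1}{2}+it, f \otimes \chi)|^{2k} \gg_k \phis(q)(\log q)^{k^2},
\end{align*}
while Theorem~\ref{thmupperbound} gives, for the same range $0 \leq k \leq 1$,
\begin{align*}
\sumstar_{\substack{ \chi \bmod q }}|L(\tfrac{1}{2}+it, f \otimes \chi)|^{2k} \ll_k \phis(q)(\log q)^{k^2}.
\end{align*}

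The two displays together say precisely that the quantity in question is bounded above and below by constant multiples (depending only on $k$) of $\phis(q)(\log q)^{k^2}$, which is the definition of the asymptotic relation $\asymp$ in the $q$-aspect. Hence
\begin{align*}
\sumstar_{\substack{ \chi \bmod q }}|L(\tfrac{1}{2}+it, f \otimes \chi)|^{2k} \asymp \phis(q)(\log q)^{k^2},
\end{align*}
which is the assertion of Theorem~\ref{thmorderofmag}. One small point to verify is that the implied constant in the combined statement may be taken to depend only on $k$ (and on the fixed form $f$ and the fixed $\varepsilon$ entering \eqref{pcondition}), but not on $q$ or $t$ within the allowed ranges; this is automatic since the implied constants in Theorems~\ref{thmlowerbound} and~\ref{thmupperbound} already have exactly this uniformity.

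There is essentially no obstacle here: the work has all been done in establishing Theorems~\ref{thmlowerbound} and~\ref{thmupperbound}, and the present statement is a formal juxtaposition of the two. If anything, the only thing worth emphasising in the write-up is that the range $0 \leq k \leq 1$ is dictated by the upper bound — the lower bound of Theorem~\ref{thmlowerbound} holds for all $k \geq 0$, so it is Theorem~\ref{thmupperbound} that forces the restriction $k \leq 1$ in Theorem~\ref{thmorderofmag}, just as in the analogous statement for the central-point moments quoted from \cite{GHH} in the introduction.
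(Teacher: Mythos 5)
Your proposal is correct and matches the paper exactly: the paper explicitly states that Theorem~\ref{thmorderofmag} is obtained by combining Theorems~\ref{thmlowerbound} and~\ref{thmupperbound}, and your discussion of why the ranges and hypotheses are compatible is a fair (if slightly more verbose) account of this.
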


Note that the condition \eqref{pcondition} holds when $q$ has a fixed number of prime divisors.  Our strategy of proofs for Theorems \ref{thmlowerbound} and \ref{thmupperbound} are an extension of \cite{GHH} in treating the special case of $t=0$, which largely follows from the lower bounds principle of W. Heap and K. Soundararajan \cite{H&Sound} and the upper bounds principle of M. Radziwi{\l\l} and K. Soundararajan \cite{Radziwill&Sound} on moments of general families of $L$-functions. 

\section{Preliminaries}
\label{sec 2}

\subsection{Cusp form $L$-functions}
\label{sec:cusp form}

  For any primitive Dirichlet character $\chi$ modulo  $q$, the twisted modular $L$-function $L(s, f \otimes \chi)$ is defined for $\Re(s) > 1$  to be
\begin{align}
\label{Lphichi}
L(s, f \otimes \chi) &= \sum_{n=1}^{\infty} \frac{\lambda_f(n)\chi(n)}{n^s}
 = \prod_{p\nmid q} \left(1 - \frac{\lambda_f (p) \chi(p)}{p^s}  + \frac{1}{p^{2s}}\right)^{-1}=\prod_{p\nmid q} \left(1 - \frac{\alpha_p \chi(p)}{p^s} \right)^{-1}\left(1 - \frac{\beta_p \chi(p)}{p^s} \right)^{-1}.
\end{align}
From Deligne's proof \cite{D} of the Weil conjecture,
\begin{align*}
|\alpha_{p}|=|\beta_{p}|=1, \quad \alpha_{p}\beta_{p}=1.
\end{align*}
  We deduce from this that $\lambda_f (n) \in \mr$ satisfying $\lambda_f (1) =1$ and 
\begin{align}
\label{lambdafbound}
|\lambda_f(n)|\leq d(n), \quad n \geq 1, 
\end{align}
 where $d(n)$ is the number of positive divisors of $n$. \newline

 Let $\iota_{\chi}=i^{\kappa}\tau(\chi)^2/q$, where $\tau(\chi)$ is the Gauss sum associated to $\chi$. We note that the $L$-function $L(s, f \otimes \chi)$ admits
analytic continuation to an entire function and satisfies the
functional equation (\cite[Proposition 14.20]{iwakow})
\begin{align}
\label{fneqn}
  \Lambda(s, f \otimes \chi) = \iota_{\chi}\Lambda(1-s, f \otimes \overline{\chi}), \quad \mbox{where} \; \Lambda(s, f \otimes \chi) = (\frac {q}{2\pi})^{s}\Gamma \big(\frac{\kappa-1}{2}+s \big)L(s, f\otimes \chi).
\end{align}

  Recall that the Rankin-Selberg $L$-function $L(s, f \times f)$ of $f$ is defined (see \cite[(23.24)]{iwakow}) for $\Re(s)>1$ to be
\begin{align}
\label{Lff}
 L(s, f \times f)=\sum_{n \geq 1}\frac {\lambda_f^2(n)}{n^s}=\zeta(s)L(s, \operatorname{sym}^2 f),
\end{align}
  where the last equality above follows from \cite[(5.97)]{iwakow}. Here $L(s, \operatorname{sym}^2 f)$ is the symmetric square $L$-function of $f$ defined for $\Re(s)>1$ by (see \cite[(25.73)]{iwakow})
\begin{align*}
 L(s, \operatorname{sym}^2 f)=& \zeta(2s) \sum_{n \geq 1}\frac {\lambda_f(n^2)}{n^s}=\prod_{p}\Big( 1-\frac {\lambda_f(p^2)}{p^s}+\frac {\lambda_f(p^2)}{p^{2s}}-\frac {1}{p^{3s}} \Big)^{-1}.
\end{align*}
  A result of G. Shimura \cite{Shimura} implies the corresponding completed $L$-function
\begin{align*}
 \Lambda(s, \operatorname{sym}^2 f)=& \pi^{-3s/2}\Gamma \big(\frac {s+1}{2}\big)\Gamma \big(\frac {s+\kappa-1}{2}\big) \Gamma \big(\frac {s+\kappa}{2}\big) L(s, \operatorname{sym}^2 f)
\end{align*}
  is entire and satisfies the functional equation
\begin{align}
\label{fcneqsymsquareLfcn}
 \Lambda(s, \operatorname{sym}^2 f)=\Lambda(1-s, \operatorname{sym}^2 f).
\end{align}
Thus $L(s, f \times f)$ has a simple pole at $s=1$. \newline

 We apply \eqref{fneqn}, \eqref{fcneqsymsquareLfcn} with \cite[(5.8)]{iwakow} and make use of the convexity bounds (see \cite[Exercise 3, p.
  100]{iwakow}) for $L$-functions to get that, for $0 \leq \Re(s) \leq 1$,
\begin{equation} \label{Lsymest}
  L(s,f) \ll  (|s|+\kappa)^{1-\Re(s)+\varepsilon} \quad \mbox{and} \quad  L(s, \operatorname{sym}^2 f) \ll \left( 1+|s| \right)^{3(1-\Re(s))/2+\varepsilon}.
\end{equation}

Furthermore, the convexity bound for $\zeta(s)$ implies that
\begin{align}
\label{zetaest}
\begin{split}
  \zeta(s) \ll & \left( 1+|s| \right)^{(1-\Re(s)/2+\varepsilon}, \quad \mbox{for} \; 0 \leq \Re(s) \leq 1.
\end{split}
\end{align}

\subsection{The approximate functional equation}
\label{sect: apprfcneqn}
 In this section, we develope the approximate functional equations for $ L(\half+it, f \otimes \chi)$ and $L(s_1, f \otimes \chi)L(s_2, f \otimes \overline \chi)$.
\begin{lemma}
\label{PropDirpoly}
   For $X>0$, we have
\begin{align}
\label{lstapprox}
 L(\half+it, f \otimes \chi) = \sum^{\infty}_{n=1} \frac{\lambda_f(n)\chi(n)}{n^{1/2+it}} W_t\left(\frac {n}{qX}\right)+\iota_{\chi}\frac {(2\pi)^{2it}}{q^{2it}}\frac {\Gamma(\frac{\kappa}{2}-it)}{\Gamma(\frac{\kappa}{2}+it)}\sum^{\infty}_{n=1} \frac{\lambda_f(n)\overline \chi(n)}{n^{1/2-it}} W_{-t}\left(\frac {nX}{q}\right),
\end{align}
  where
$$ W_{\pm t}(x) = \frac{1}{2\pi i} \int\limits_{(2)} \frac{\Gamma (\frac{\kappa}{2}\pm it +s)}{\Gamma (\frac{\kappa}{2}\pm it)} e^{s^2}(2\pi x)^{-s} \> \frac{\dif s}{s}.$$
Moreover,
\begin{align}
\label{lsquareapprox}
\begin{split}
  L &(s_1, f \otimes \chi)L(s_2, f \otimes \overline \chi) \\
& =   \sum^{\infty}_{m, n=1} \frac{\lambda_f(m)\lambda_f(n)\chi(m) \overline{\chi}(n)}{m^{s_1}n^{s_2}} \Wf_{s_1,s_2} \left(\frac {mn}{q^2}\right) \\
  & \hspace*{1cm} +\Big(\frac{q}{2\pi}\Big)^{2(1-s_1-s_2)}\frac {\Gamma(\frac{\kappa-1}{2}+1-s_1)\Gamma(\frac{\kappa-1}{2}+1-s_2)}{\Gamma(\frac{\kappa-1}{2}+s_1)\Gamma(\frac{\kappa-1}{2}+s_2)}\sum^{\infty}_{m, n=1} \frac{\lambda_f(m)\lambda_f(n)\overline\chi(m) \chi(n)}{m^{1-s_1}n^{1-s_2}}\Wf_{1-s_1, 1-s_2}\left(\frac {mn}{q^2}\right),
\end{split}
\end{align}
  where
$$ \Wf_{s_1,s_2} (x) = \frac{1}{2\pi i} \int\limits_{(2)} \frac{\Gamma\left(\frac{\kappa-1}{2} +s_1+ s \right)\Gamma\left(\frac{\kappa-1}{2} +s_2+ s \right)}{(2\pi)^{2s} \Gamma\left(\frac{\kappa-1}{2}+s_1 \right)\Gamma\left(\frac{\kappa-1}{2}+s_2 \right) } e^{s^2} x^{-s} \> \frac{\dif s}{s}.$$
  Moreover, the functions $W_{\pm t}(x), \Wf_{s_1,s_2} (x)$ satisfy the bound that for any $c>0$,
\begin{align}
\label{W}
 W_{\pm t}(x) \ll_c \min( 1 , (|t|+1)^cx^{-c}), \quad  \Wf_{s_1, s_2} (x)  \ll_c \min( 1 , (|s_1|+1)^{c}(|s_2|+1)^cx^{-c}).
\end{align}
\end{lemma}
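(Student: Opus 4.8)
The statement to prove is Lemma~\ref{PropDirpoly}, giving approximate functional equations for $L(\half+it, f\otimes\chi)$ and for the product $L(s_1, f\otimes\chi)L(s_2, f\otimes\overline\chi)$, together with decay bounds on the weight functions.

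\textbf{Plan.} The proof is a standard contour-shifting argument built on the functional equations \eqref{fneqn}. For the first formula, I would start from the integral
\[
I(X) = \frac{1}{2\pi i}\int_{(2)} \frac{\Gamma(\tfrac{\kappa}{2}+it+s)}{\Gamma(\tfrac{\kappa}{2}+it)}\, e^{s^2}\, (2\pi X q)^{s}\, L(\tfrac12+it+s, f\otimes\chi)\,\frac{\dif s}{s},
\]
expand $L(\tfrac12+it+s, f\otimes\chi)$ into its Dirichlet series (legitimate since $\Re(\tfrac12+it+s)=\tfrac52>1$ on the line $\Re(s)=2$), and integrate term by term to recognize the first sum in \eqref{lstapprox} with weight $W_t(n/(qX))$. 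Then I would move the contour to $\Re(s)=-2$. The integrand is entire in the strip except for the simple pole at $s=0$ contributed by $1/s$ (the completed $L$-function $\Lambda(s,f\otimes\chi)$ is entire by \eqref{fneqn}, and the $\Gamma$-factor in the numerator has no poles for $\kappa\ge 2$ in this range), so the residue at $s=0$ is exactly $L(\tfrac12+it, f\otimes\chi)$. On the shifted line $\Re(s)=-2$, I substitute $s\mapsto -s$ and invoke the functional equation \eqref{fneqn} written as $L(\tfrac12+it+s, f\otimes\chi) = \iota_\chi (q/2\pi)^{-2(it+s)} \frac{\Gamma(\frac{\kappa}{2}-it-s)}{\Gamma(\frac{\kappa}{2}+it+s)} L(\tfrac12-it-s, f\otimes\overline\chi)$; expanding the resulting $L$-function (now with $\Re(\tfrac12-it-s)=\tfrac52>1$) into its Dirichlet series and integrating term by term produces the dual sum with weight $W_{-t}(nX/q)$ and the claimed archimedean and $\iota_\chi$ factors. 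The decay of the Gaussian $e^{s^2}$ justifies all interchanges of sum and integral and guarantees the tails converge.

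\textbf{The product case.} The second formula \eqref{lsquareapprox} is handled identically, starting instead from
\[
\frac{1}{2\pi i}\int_{(2)} \frac{\Gamma(\tfrac{\kappa-1}{2}+s_1+s)\Gamma(\tfrac{\kappa-1}{2}+s_2+s)}{\Gamma(\tfrac{\kappa-1}{2}+s_1)\Gamma(\tfrac{\kappa-1}{2}+s_2)}\, \frac{e^{s^2}}{(2\pi)^{2s}}\, q^{2s}\, L(s_1+s, f\otimes\chi)L(s_2+s, f\otimes\overline\chi)\,\frac{\dif s}{s},
\]
expanding the product of $L$-functions into the double Dirichlet series $\sum_{m,n}\lambda_f(m)\lambda_f(n)\chi(m)\overline\chi(n) m^{-s_1-s} n^{-s_2-s}$, which converges absolutely on $\Re(s)=2$ since $\Re(s_j)>0$. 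Again one shifts to $\Re(s)=-2$; the only pole crossed is the simple pole at $s=0$ from $1/s$ (both $\Lambda(s_1+s, f\otimes\chi)$ and $\Lambda(s_2+s,f\otimes\overline\chi)$ are entire, and the hypothesis $s_1+s_2\ne 1$ ensures no coincidental issue, though for this identity it is the pole at $s=0$ alone that matters), with residue $L(s_1,f\otimes\chi)L(s_2,f\otimes\overline\chi)$. Applying \eqref{fneqn} twice (once for each factor, with $\iota_\chi \iota_{\overline\chi} = 1$ since $\tau(\chi)\tau(\overline\chi) = \chi(-1) q$ and $i^{2\kappa}\chi(-1)^2 = 1$ for even $\kappa$) converts the shifted integral into the dual double sum with parameters $1-s_1, 1-s_2$ and the stated ratio of $\Gamma$-factors and power of $q/2\pi$.

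\textbf{The weight bounds.} For \eqref{W}, the bound $W_{\pm t}(x)\ll 1$ follows by moving the contour in the definition of $W_{\pm t}$ to $\Re(s)=\delta$ for small $\delta>0$ (no pole crossed, picking up nothing) and bounding trivially using the Gaussian decay, after controlling the $\Gamma$-ratio on vertical lines via Stirling — here one uses that $|\Gamma(\tfrac{\kappa}{2}+it+s)/\Gamma(\tfrac{\kappa}{2}+it)|$ stays bounded for bounded $\Re(s)$. For the decaying bound, one instead shifts the contour far to the right, to $\Re(s)=c$, which produces a factor $x^{-c}$; the $\Gamma$-ratio on that line is bounded by $O_c((|t|+1)^c)$ by Stirling's formula applied to $\Gamma(\tfrac{\kappa}{2}+it+s)$ for $\Re(s)=c$ relative to $\Gamma(\tfrac{\kappa}{2}+it)$, and the Gaussian $e^{s^2}$ keeps the integral convergent, yielding $W_{\pm t}(x)\ll_c (|t|+1)^c x^{-c}$. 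The identical argument with $\Wf_{s_1,s_2}$ gives the factor $(|s_1|+1)^c(|s_2|+1)^c x^{-c}$. Combining the two regimes gives the stated minimum.

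\textbf{Main obstacle.} The substantive points, rather than genuine obstacles, are (a) verifying that the shifted integrand has no poles other than $s=0$ — this rests on the entirety of the completed $L$-functions from \eqref{fneqn} and \eqref{fcneqsymsquareLfcn} and on $\kappa\ge 2$ so that the numerator $\Gamma$-factors are holomorphic in $\Re(s)\ge -2$ — and (b) the careful bookkeeping of the archimedean factors, the Gauss-sum constant $\iota_\chi$, and the powers of $q/(2\pi)$ when applying the functional equation, particularly confirming $\iota_\chi\iota_{\overline\chi}=1$ in the product case. Both are routine but must be done precisely; the analytic estimates for the interchange of sum and integral and for the weight bounds are immediate from the Gaussian damping and Stirling's formula.
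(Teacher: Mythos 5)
Your proposal follows essentially the same contour-shift-and-functional-equation argument the paper uses: the paper works with the completed functions $\Lambda(s_i+u,f\otimes\chi)$, keeps $G(u)$ and $X$ as free parameters, moves the contour to $\Re(u)=-2$, applies \eqref{fneqn} to get $\iota_\chi\iota_{\overline\chi}=1$ after a change of variable $u\mapsto -u$, and then divides out the archimedean factors and specializes $G(u)=e^{u^2}$, $X=1$; you simply carry out the division at the outset and work with $L$ directly, which is an equivalent bookkeeping choice. One small slip: in your first integral the kernel should be $\big(qX/(2\pi)\big)^s$, not $(2\pi Xq)^s$, in order to match the definition of $W_t$ with argument $n/(qX)$; everything else, including the computation of $\iota_\chi\iota_{\overline\chi}$ and the Stirling bounds for \eqref{W}, aligns with the paper's proof and its reference to \cite[Theorem 5.3, Proposition 5.4]{iwakow}.
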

\begin{proof}
  The approximate functional equation given in \eqref{lstapprox} can be derived using standard arguments as those in the proof of \cite[Theorem 5.3]{iwakow}. It remains to establish the approximate functional equation given in \eqref{lsquareapprox}. \newline
  
  We write ${\bf s}=(s_1, s_2), 1-{\bf s}=(1-s_1, 1-s_2)$.  Let $G(s)$ be an entire, even function, bounded in any strip $-A \leq \Re(s) \leq A$ for some $A>2$ such that $G(0)=1$. For a real parameter $X>0$, consider the integral
\begin{align*}
   I(X, {\bf s}, \chi)=\frac 1{2 \pi i}\int\limits_{(2)}\Lambda(s_1+u, f \otimes \chi)\Lambda(s_2+u, f \otimes \overline{\chi})G(u) X^u\frac {\dif u}{u}.
\end{align*}
    Moving the contour of integral to $\Re(u)=-2$, we get
\begin{equation*}
   \Lambda(s_1, f \otimes \chi)\Lambda(s_2, f \otimes \overline{\chi})=I(X,{\bf s}, \chi)-
\frac 1{2 \pi i}\int\limits_{(-2)}\Lambda(s_1+u, f \otimes \chi)\Lambda(s_2+u, f \otimes \overline{\chi})G(u) X^u \frac {\dif u}{u}.
\end{equation*}
Now the functional equation \eqref{fneqn} leads to
\begin{align}
\label{Lambda}
\begin{split}
     \Lambda(s_1, f \otimes \chi)\Lambda(s_2, f \otimes \overline{\chi}) = & I(X, {\bf s}, \chi)
-\frac {\iota_{\chi}\iota_{\overline{\chi}}}{2 \pi i}\int\limits_{(-2)}\Lambda(1-s_1-u, f \otimes \overline \chi)\Lambda(1-s_2-u, f \otimes \chi)G(u)X^u\frac {\dif u}{u} \\
=& I(X, {\bf s},\chi)+ \frac {\iota_{\chi}\iota_{\overline{\chi}}}{2 \pi i}\int\limits_{(2)}\Lambda(1-s_1+u, f \otimes \overline{\chi})\Lambda(1-s_2+u, f \otimes \chi )G(u)X^{-u} \frac {\dif u}{u} \\
=& I(X, {\bf s}, \chi)+I(X^{-1},1-{\bf s}, \overline{\chi}),
\end{split}
\end{align}
  where the penultimate equality above emerges from a change of variable $u \rightarrow -u$ in the first integral, and the last equality follows from \cite[Theorems 9.5, 9.7]{MVa1} and the assumption that $\kappa$ is even so that 
\begin{align*}
\begin{split}
 \iota_{\chi}\iota_{\overline{\chi}}=(-1)^{\kappa}\tau(\chi)^2\tau(\overline \chi)^2/q^2=\tau(\chi)^2(\overline{\tau( \chi)}\chi(-1))^2/q^2=1.
\end{split}
\end{align*}

   Upon expanding $\Lambda(s_i+u, f \otimes \chi), 1 \leq i \leq 2$ into convergent Dirichlet series, we have
\begin{align*}
 I(X, {\bf s},  \chi)=&\frac 1{2 \pi i}\int\limits_{(2)} \left ( \sum_{m_1,  m_2 } \frac{\lambda_f(m_1)\chi(m_1)}{m^{s_1+u}_1}\frac{\lambda_f(m_2)\overline \chi(m_2)}{m^{s_2+u}_2}\prod^2_{j=1} (\frac{q}{2\pi})^{s_j+u-1/2}\Gamma(\frac{\kappa-1}{2}+s_j+u)
 \right )
 G(u) X^u\frac {\dif u}{u}, \\
I(X^{-1}, 1-{\bf s}, \overline{\chi})=&
\frac 1{2 \pi i}\int\limits_{(2)}\left ( \sum_{m_1,  m_2 } \frac{\lambda_f(m_1)\overline  \chi(m_1)}{m^{1-s_1+u}_1}\frac{\lambda_f(m_2) \chi(m_2)}{m^{1-s_2+u}_2}\prod^2_{j=1} (\frac{q}{2\pi})^{1-s_j+u-1/2}\Gamma(\frac{\kappa-1}{2}+1-s_j+u)
 \right )
 G(u)X^{-u} \frac {\dif u}{u}.
\end{align*}

   Applying these expressions and dividing through $\prod^2_{j=1} (\frac{q}{2\pi})^{s_j-1/2}\Gamma(\frac{\kappa-1}{2}+s_j)$ on both sides of \eqref{Lambda},
we obtain that
\begin{align*}
\begin{split}
 L(s_1,& f \otimes\chi)L(s_2,f \otimes \overline \chi)\\
 =& \frac 1{2 \pi i}\int\limits_{(2)} \left ( \sum_{m_1,  m_2 }  \frac{\lambda_f(m_1)\chi(m_1)}{m^{1-s_1+u}_1}\frac{\lambda_f(m_2)\overline \chi(m_2)}{m^{1-s_2+u}_2}\prod^2_{j=1}\Big(\frac{q}{2\pi}\Big)^{u}\frac {\Gamma(\frac{\kappa-1}{2}+s_j+u)}{\Gamma(\frac{\kappa-1}{2}+s_j)}
 \right )
 G(u) X^u\frac {\dif u}{u} \\
 & \hspace*{1cm} + \frac {1}{2 \pi i}\int\limits_{(2)}\left (  \sum_{m_1,  m_2 } \frac{\lambda_f(m_1)\overline \chi(m_1)}{m^{1-s_1+u}_1}\frac{\lambda_f(m_2) \chi(m_2)}{m^{1-s_2+u}_2} \prod^2_{j=1} \Big(\frac{q}{2\pi}\Big)^{1-2s_j+u}\frac {\Gamma(\frac{\kappa-1}{2}+1-s_j+u)}{\Gamma(\frac{\kappa-1}{2}+s_j)}
 \right )
 G(u)X^{-u} \frac {\dif u}{u}.
\end{split}
\end{align*}

   Upon setting $G(u)=e^{u^2}$ and $X=1$, we deduce readily \eqref{lsquareapprox} from the above.  Next we note that Stirling's formula as given in \cite[(5.112)]{iwakow} implies that for $j=1,2$,
\begin{align}
\label{Stirlingratiosquare}
  \frac{\Gamma\left(\frac{\kappa-1}{2} +s_j+ s \right)}{ \Gamma\left(\frac{\kappa-1}{2}+s_j \right) } \ll \frac {|s+s_j|^{\sigma_j+\Re(s)-1/2}}{|s_j|^{\sigma_j-1/2}}\exp \Big(\frac {\pi}{2}(|s_j|-|s_j+s|)\Big) \ll (|s_j|+4)^{\Re(s)}\exp\Big(\frac {\pi}{2}|s|\Big).
\end{align}
  We apply this and argue in a manner similar to the proof of \cite[Propsition 5.4]{iwakow} to see that the bounds in \eqref{W} holds.  This completes the proof of the lemma.
\end{proof}

\section{Proofs of Theorem \ref{thmtwistedsecondmoment}}
\label{sec 3}

With $\mu$ denoting the M\"obius function, we have (see \cite[(3.8)]{iwakow}), for $(a, q)=1$,
\begin{align}
\label{sumchistar}
\sumstar_{\substack{ \chi \bmod q }}\chi(a)=\sum_{c | (q, a-1)}\mu\Big( \frac{q}{c} \Big)\varphi(c).
\end{align}
   In particular, taking $a=1$ in \eqref{sumchistar} gives
\begin{align}
\label{chistar}
 \phis(q)=\sum_{c | q}\mu\Big( \frac{q}{c} \Big) \varphi(c).
\end{align}
  
 We apply \eqref{lsquareapprox} and \eqref{sumchistar}  to see that
\begin{align}
\label{LNsquaresum1}
\begin{split}
 \sumstar_{\substack{ \chi \bmod q }} & L(s_1, f \otimes \chi)L(s_2, f \otimes \overline \chi) \chi(a)\overline \chi(b) \\
=&  \sum^{\infty}_{m, n=1} \frac{\lambda_f(m)\lambda_f(n)}{m^{s_1}n^{s_2}} \Wf_{s_1,s_2} \left(\frac {mn}{q^2}\right)
{\sumstar_{\chi \bmod q}} \chi(ma) \overline{\chi}(nb) \\
& \hspace*{0.5cm} +\Big(\frac{q}{2\pi}\Big)^{2(1-s_1-s_2)}\frac {\Gamma(\frac{\kappa-1}{2}+1-s_1)\Gamma(\frac{\kappa-1}{2}+1-s_2)}{\Gamma(\frac{\kappa-1}{2}+s_1)\Gamma(\frac{\kappa-1}{2}+s_2)}\sum^{\infty}_{m, n=1} \frac{\lambda_f(m)\lambda_f(n)}{m^{1-s_1}n^{1-s_2}}\Wf_{1-s_1, 1-s_2}\left(\frac {mn}{q^2}\right)
{\sumstar_{\chi \bmod q}} \chi(na) \overline{\chi}(mb)\\
=& \sum_{c | q}\mu \Big( \frac{q}{c} \Big)\phi(c) \sum_{\substack{m,n \\ (mn, q)=1 \\ ma \equiv nb \bmod c}}
\frac{\lambda_f(m)\lambda_f(n)}{m^{s_1}n^{s_2}} \Wf_{s_1,s_2} \left(\frac {mn}{q^2}\right) \\
& \hspace*{0.5cm} +\Big(\frac{q}{2\pi}\Big)^{2(1-s_1-s_2)}\frac {\Gamma(\frac{\kappa-1}{2}+1-s_1)\Gamma(\frac{\kappa-1}{2}+1-s_2)}{\Gamma(\frac{\kappa-1}{2}+s_1)\Gamma(\frac{\kappa-1}{2}+s_2)}\sum_{c | q}\mu\Big( \frac{q}{c} \Big)\phi(c) \sum_{\substack{m,n \\ (mn, q)=1 \\ mb \equiv na \bmod c}} \frac{\lambda_f(m)\lambda_f(n)}{m^{1-s_1}n^{1-s_2}} \Wf_{1-s_1,1-s_2} \left(\frac {mn}{q^2}\right).
\end{split}
\end{align}

Now Stirling's formula as given in \cite[(5.112)]{iwakow} implies that 
 \begin{align}
\label{Stirlingratiosquare1}
  \frac{\Gamma\left(\frac{\kappa-1}{2}+1-s_1 \right)\Gamma\left(\frac{\kappa-1}{2}+1-s_2 \right)}{ \Gamma\left(\frac{\kappa-1}{2}+s_1 \right)\Gamma\left(\frac{\kappa-1}{2}+s_2 \right) } \ll (|s_1|+1)^{1-2\sigma_1}(|s_2|+1)^{1-2\sigma_2}.
\end{align}

   It follows from this that the contribution of the case $c=1$ in the in the last display of \eqref{LNsquaresum1} is
\begin{align}
\label{lambdasumc=1'}
\begin{split}
 \ll  \Bigg| \sum_{\substack{m,n \\ (mn, q)=1 }} &
 \frac{\lambda_f(m)\lambda_f(n)}{m^{s_1}n^{s_2}} \Wf_{s_1,s_2} \left(\frac {mn}{q^2}\right) \Bigg| \\ & +(|s_1|+1)^{1-2\sigma_1}(|s_2|+1)^{1-2\sigma_2}q^{2(1-\sigma_1-\sigma_2)}\Bigg|\sum_{\substack{m,n \\ (mn, q)=1 }} \frac{\lambda_f(m)\lambda_f(n)}{m^{1-s_1}n^{1-s_2}} \Wf_{1-s_1,1-s_2} \left(\frac {mn}{q^2}\right)\Bigg|.
\end{split}
\end{align}
We apply \eqref{Lphichi} and  the definition of $\Wf_{s_1, s_2} (x)$ given in Lemma \ref{PropDirpoly} to arrive at 
\begin{align}
\label{lambdasumc=1}
\begin{split}
 \sum_{\substack{m,n \\ (mn, q)=1 }} & \frac{\lambda_f(m)\lambda_f(n)}{m^{s_1}n^{s_2}} \Wf_{s_1,s_2} \left(\frac {mn}{q^2}\right) \\
 =& \frac 1{2\pi i}\int\limits_{(2)}
\frac{\Gamma\left(\frac{\kappa-1}{2} +s_1+ s \right)\Gamma\left(\frac{\kappa-1}{2} +s_2+ s \right)}{(2\pi)^{2s} \Gamma\left(\frac{\kappa-1}{2}+s_1 \right)\Gamma\left(\frac{\kappa-1}{2}+s_2 \right) } L(s_1+s, f)L(s_2+s,f) \\
& \hspace*{2cm} \times \left(1 - \frac{\lambda_f (q)}{q^{s_1+s}} + \frac{1}{q^{2s_1+2s}}\right )\left(1 - \frac{\lambda_f (q)}{q^{s_2+s}} + \frac{1}{q^{2s_2+2s}}\right )q^{2s}e^{s^2}
\frac{\dif s}{s}.
\end{split}
\end{align}
 
We shift the contour of integration in \eqref{lambdasumc=1} to the line $\Re(s)=\varepsilon$ and apply \eqref{Lsymest}, \eqref{Stirlingratiosquare} to bound the integral on the new line.  This reveals that \eqref{lambdasumc=1} is $\ll q^{\varepsilon}$. Similarly,
\begin{align*}
\begin{split}
  \sum_{\substack{m,n \\ (mn, q)=1 }}
 \frac{\lambda_f(m)\lambda_f(n)}{m^{1-s_1}n^{1-s_2}} \Wf_{1-s_1,1-s_2} \left(\frac {mn}{q^2}\right) \ll q^{\varepsilon}.
\end{split}
\end{align*}

Inserting these bounds in \eqref{lambdasumc=1'}, the total contribution of the case $c=1$ in the last display of \eqref{LNsquaresum1} is
\begin{align}
\label{contributionc=1}
\begin{split}
  \ll q^{\varepsilon}\Big (1+(|s_1|+1)^{1-2\sigma_1}(|s_2|+1)^{1-2\sigma_2}q^{2(1-\sigma_1-\sigma_2)}\Big ).
\end{split}
\end{align}

\subsection{Diagonal terms}  

 We consider first the terms $ma = nb$ (resp. $mb = na$) in the last expression of \eqref{LNsquaresum1}. As $(a, b)=1$, we may write $m = \alpha b, n =\alpha a$ (resp. $m = \alpha a, n =\alpha b$). Moreover, as $(ab, q)=1$, the condition $(mn, q)=1$ reduces to $(\alpha, q)=1$.  So \eqref{chistar} gives that these terms equal
\begin{align}
\label{mainterm}
\begin{split}
 & \frac{\phis(q)}{b^{s_1}a^{s_2}} F(s_1, s_2)+(\frac{q}{2\pi})^{2(1-s_1-s_2)}\frac {\Gamma(\frac{\kappa-1}{2}+1-s_1)\Gamma(\frac{\kappa-1}{2}+1-s_2)}{\Gamma(\frac{\kappa-1}{2}+s_1)\Gamma(\frac{\kappa-1}{2}+s_2)}
\frac{\phis(q)}{a^{1-s_1}b^{1-s_2}}F(1-s_1, 1-s_2),
\end{split}
\end{align}
  where
\begin{align*}
\begin{split}
 & F(s_1, s_2)=\sum_{(\alpha, q)=1} \frac{\lambda_f(\alpha b)\lambda_f(\alpha a)}{\alpha^{s_1+s_2}} \Wf_{s_1, s_2} \left(\frac{\alpha^2
ab}{q^2}\right).
\end{split}
\end{align*}

Writing $Y=q^2/(ab)$ for convenience and applying the definition of $\Wf_{s_1,s_2} (x)$ in Lemma \ref{PropDirpoly}, we arrive at
\begin{align}
\label{maintermint}
\begin{split}
 F(s_1, s_2)= & \frac{1}{2\pi i} \int\limits_{(2)}
\frac{\Gamma\left(\frac{\kappa-1}{2}+s_1 + s \right)\Gamma\left(\frac{\kappa-1}{2}+s_2 + s \right)}{(2\pi )^{2s} \Gamma\left(\frac{\kappa-1}{2}+s_1 \right)\Gamma\left(\frac{\kappa-1}{2}+s_2  \right) }L(2s+s_1+s_2, f \times f) H(2s+s_1+s_2; q, a, b)Y^{s}
\frac{\dif s}{s}.
\end{split}
\end{align}
  
 We then shift the integral in \eqref{maintermint} to the line of integration to $\Re(s)=1/4-(\sigma_1+\sigma_2)/2+\varepsilon$. Here we take $\varepsilon>0$ small enough so that $\Re(2s+s_1+s_2)>1/2$ on this new line.  As $s_1+s_2 \neq 1$, we deduce from \eqref{Lff} that we encounter simple poles at $s=0$ and $s=(1-s_1-s_2)/2$ (due to the simple pole of $\zeta(s)$ at $s=1$) in this process.  Let $\omega(n)$ denote the number of primes dividing $n$ and recall that $d(n)$ denotes the divisor function of $n$. By the well-known estimations (see \cite[Theorems 2.10, 2.11]{MVa1}) that for $n \geq 3$,
\begin{align}
\label{omegandnbound}
\omega(n), \log d(n) \ll \frac {\log n}{\log \log n}, 
\end{align}
 and the observation from \eqref{Hp} that $H_p(s; q, a, b)= \lambda_f(p^l)+ O(l/p^{\Re(s)})$ for $p^l \|  ab$, we infer that on the new line, for some constant $B_1$,
\begin{align*}
\begin{split}
 H(2s+s_1+s_2; q, a,b) \ll B_1^{\omega(q)+\omega(a)+\omega(b)}d(a)d(b) \ll (abq)^{\varepsilon}.
\end{split}
\end{align*}
 
  Combining the above with \eqref{Lsymest}, \eqref{zetaest} and the rapid decay of $\Gamma(s)$ as $|\Im(s)| \rightarrow \infty$, the integral on the new line is
\begin{align*}
\begin{split}
  \ll (\frac {q^2}{ab})^{1/4-(\sigma_1+\sigma_2)/2+\varepsilon}.
\end{split}
\end{align*}

Recall that the reside of $\zeta(s)$ at $s=1$ equals to $1$.  Taking account the residues at $s=0$ and $s=(1-s_1-s_2)/2$,
\begin{align}
\label{alphasum}
\begin{split}
  F(s_1, s_2)= \zeta(s_1+ &s_2)L(s_1+s_2, \operatorname{sym}^2 f)H(s_1+s_2; q, a,b)\\
&+\frac{\Gamma\left(\frac{\kappa+s_1-s_2}{2} \right)\Gamma\left(\frac{\kappa-s_1+s_2}{2} \right)L(1, \operatorname{sym}^2 f) H(1; q, a,b)}{(2\pi )^{1-s_1-s_2}(1-s_1-s_2) \Gamma\left(\frac{\kappa-1}{2}+s_1 \right)\Gamma\left(\frac{\kappa-1}{2}+s_2  \right) } \Big(\frac {q^2}{ab}\Big)^{(1-s_1-s_2)/2} \\
&+O\Big( \Big(\frac {ab}{q^2}\Big)^{1/4-(\sigma_1+\sigma_2)/2+\varepsilon}\Big). 
\end{split}
\end{align}
  Similarly,
\begin{align}
\label{alphasum1}
\begin{split}
  F(1-s_1, 1-s_2)= \zeta(2-s_1&-s_2)L(2-s_1-s_2, \operatorname{sym}^2 f)H(2-s_1-s_2; q, a,b)\\
&+\frac{\Gamma\left(\frac{\kappa+s_1-s_2}{2} \right)\Gamma\left(\frac{\kappa-s_1+s_2}{2} \right) L(1, \operatorname{sym}^2 f) H(1; q, a,b) }{(2\pi )^{s_1+s_2-1}(s_1+s_2-1) \Gamma\left(\frac{\kappa-1}{2}+1-s_1 \right)\Gamma\left(\frac{\kappa-1}{2}+1-s_2  \right) } \Big(\frac {q^2}{ab}\Big)^{(s_1+s_2-1)/2} \\
&+O\Big( \Big(\frac {q^2}{ab}\Big)^{-3/4+(\sigma_1+\sigma_2)/2+\varepsilon} \Big). 
\end{split}
\end{align}

From \eqref{mainterm}, \eqref{alphasum}, \eqref{alphasum1} and \eqref{Stirlingratiosquare1}, the terms $ma = nb$ and $mb = na$ in the last expression of \eqref{LNsquaresum1} are
\begin{align}
\label{mainterm1}
\begin{split}
 \frac{\phis (q)}{b^{s_1}a^{s_2}} & \zeta(s_1+s_2)L(s_1+s_2, \operatorname{sym}^2 f)H(s_1+s_2; q, a,b)\\
&+\frac{\phis(q)}{b^{s_1}a^{s_2}}\cdot \frac{\Gamma\left(\frac{\kappa+s_1-s_2}{2} \right)\Gamma\left(\frac{\kappa-s_1+s_2}{2} \right) L(1, \operatorname{sym}^2 f) H(1; q, a,b) }{(2\pi )^{1-s_1-s_2}(1-s_1-s_2) \Gamma\left(\frac{\kappa-1}{2}+s_1 \right)\Gamma\left(\frac{\kappa-1}{2}+s_2  \right) } \Big(\frac {q^2}{ab}\Big)^{(1-s_1-s_2)/2} \\
&+ \Big(\frac{q}{2\pi}\Big)^{2(1-s_1-s_2)}\frac {\Gamma(\frac{\kappa-1}{2}+1-s_1)\Gamma(\frac{\kappa-1}{2}+1-s_2)}{\Gamma(\frac{\kappa-1}{2}+s_1)\Gamma(\frac{\kappa-1}{2}+s_2)}
\frac{\phis(q)}{a^{1-s_1}b^{1-s_2}} \\
& \hspace*{2cm} \times \zeta(2-s_1-s_2)L(2-s_1-s_2, \operatorname{sym}^2 f)H(2-s_1-s_2; q, a,b)\\
&+\Big(\frac{q}{2\pi}\Big)^{2(1-s_1-s_2)}\frac{\phis(q)}{a^{1-s_1}b^{1-s_2}}\frac{\Gamma\left(\frac{\kappa+s_1-s_2}{2} \right)\Gamma\left(\frac{\kappa-s_1+s_2}{2} \right)L(1, \operatorname{sym}^2 f) H(1; q,a,b) }{(2\pi )^{s_1+s_2-1}(s_1+s_2-1) \Gamma\left(\frac{\kappa-1}{2}+s_1 \right)\Gamma\left(\frac{\kappa-1}{2}+s_2  \right) }\Big(\frac {q^2}{ab}\Big)^{(s_1+s_2-1)/2} \\
&+O\Big( \Big(\frac {q^2}{ab}\Big)^{1/4-(\sigma_1+\sigma_2)/2+\varepsilon} \frac{q}{b^{\sigma_1}a^{\sigma_2}}+(|s_1|+1)^{1-2\sigma_1}(|s_2|+1)^{1-2\sigma_2}\frac{q^{1+2(1-\sigma_1-\sigma_2)}}{a^{1-\sigma_1}b^{1-\sigma_2}}\Big(\frac {q^2}{ab}\Big)^{-3/4+(\sigma_1+\sigma_2)/2+\varepsilon}\Big) \\
= \frac{\phis(q)}{b^{s_1}a^{s_2}} & \zeta(s_1+s_2)L(s_1+s_2, \operatorname{sym}^2 f)H(s_1+s_2; q, a,b)\\
&+ \Big(\frac{q}{2\pi}\Big)^{2(1-s_1-s_2)}\frac {\Gamma(\frac{\kappa-1}{2}+1-s_1)\Gamma(\frac{\kappa-1}{2}+1-s_2)}{\Gamma(\frac{\kappa-1}{2}+s_1)\Gamma(\frac{\kappa-1}{2}+s_2)}
\frac{\phis(q)}{a^{1-s_1}b^{1-s_2}} \\
& \hspace*{2cm} \times \zeta(2-s_1-s_2)L(2-s_1-s_2, \operatorname{sym}^2 f)H(2-s_1-s_2; q, a,b)\\
&+O\Big( \Big(\frac {q^2}{ab}\Big)^{1/4-(\sigma_1+\sigma_2)/2+\varepsilon} \frac{q}{b^{\sigma_1}a^{\sigma_2}}+(|s_1|+1)^{1-2\sigma_1}(|s_2|+1)^{1-2\sigma_2}\frac{q^{1+2(1-\sigma_1-\sigma_2)}}{a^{1-\sigma_1}b^{1-\sigma_2}}\Big(\frac {q^2}{ab}\Big)^{-3/4+(\sigma_1+\sigma_2)/2+\varepsilon}\Big). 
\end{split}
\end{align}

\subsection{Off-diagonal terms}

It still remains to consider the contribution of the terms $ma \neq nb$ and $mb \neq na$ in the last expression of \eqref{LNsquaresum1}. Due to the rapid decay of $\Wf_{s_1,s_2}(x)$ (see \eqref{W}), we may assume that $mn \leq (|s_1|+1)^{1+\varepsilon}(|s_2|+1)^{1+\varepsilon}q^{2+\varepsilon}$.  Now \cite[Lemma 1.6]{BFKMM} gives that there exist two non-negative function ${\mathcal V}_1(x)$, ${\mathcal V}_2(x)$ supported on $[1/2,2]$,  satisfying
\begin{align}
\label{Vbounds}
\begin{split}
 {\mathcal V}^{(k)}_j(x) \ll_{k, \varepsilon} q^{k\varepsilon}.
\end{split}
\end{align}
  Moreover, we have the following smooth partition of unity:
\begin{align*}
\begin{split}
  \sum_{k \geq 0}{\mathcal V}_j \Big(\frac x{2^k}\Big)=1, \quad j=1,2.
\end{split}
\end{align*}

  Applying this, the definition of $\Wf_{s_1,s_2}(x)$ in Lemma \ref{PropDirpoly} and  \eqref{Stirlingratiosquare1}, the terms $ma \neq nb$ in the last expression of \eqref{LNsquaresum1} contribute
\begin{align}
\label{offdiagbounds}
\begin{split}
 \ll & \sum_{k_1, k_2} \sum_{c | q} \Big| \mu\Big(\frac{q}{c} \Big) \phi(c) \Big|\sum_{\substack{A=2^{k_1}, B=2^{k_2} \\ k_1, k_2 \geq 0 \\ AB \leq (|s_1|+1)^{1+\varepsilon}(|s_2|+1)^{1+\varepsilon}q^{2+\varepsilon}}} \frac {\mathcal{I}_{V_1, V_2} (s_1, s_2)}{A^{\sigma_1}B^{\sigma_2}}  \\
&+ (|s_1|+1)^{1-2\sigma_1}(|s_2|+1)^{1-2\sigma_2}q^{2(1-\sigma_1-\sigma_2)}\sum_{k_1, k_2}  \sum_{c | q} \Big| \mu\Big(\frac{q}{c} \Big) \phi(c) \Big| \sum_{\substack{A=2^{k_1}, B=2^{k_2} \\ k_1, k_2 \geq 0 \\ AB \leq (|s_1|+1)^{1+\varepsilon}(|s_2|+1)^{1+\varepsilon}q^{2+\varepsilon}}} \frac { \mathcal{I}_{V_3, V_4} (1-s_1, 1-s_2)}{A^{1-\sigma_1}B^{1-\sigma_2}} ,
\end{split}
\end{align}
 where
\begin{equation} \label{calIdef}
 \mathcal{I}_{V_j, V_l} (s_1, s_2) = \int\limits_{(\varepsilon)} \Big |
\sum_{\substack{ma \neq nb \\ (mn, q)=1 \\ ma \equiv nb  \bmod c}}
\lambda_f(m)\lambda_f(n)V_j \left(\frac {m}{A}\right) V_l \left(\frac {n}{B}\right)\Big (\frac {q^2}{AB}\Big )^{s} \frac{\Gamma\left(\frac{\kappa-1}{2} +s_1+ s \right)\Gamma\left(\frac{\kappa-1}{2} +s_2+ s \right)}{(2\pi)^{2s} \Gamma\left(\frac{\kappa-1}{2}+s_1 \right)\Gamma\left(\frac{\kappa-1}{2}+s_2 \right) }\Big | \frac{|e^{s^2} \dif s|}{|s|}
\end{equation}
 with
\begin{align*}
\begin{split}
V_j \left(x \right)= \left\{
 \begin{array}
  [c]{ll}
  x^{-s_j-s}{\mathcal V}_j(x), \quad j=1,2,\\
 x^{s_j-s-1}{\mathcal V}_{j-2}(x), \quad j=3,4 .
 \end{array}
 \right.
 \end{split} 
\end{align*}
Now the rapid decay of $e^{s^2}$ on the vertical line and \eqref{Stirlingratiosquare} ensure that truncating the two integrals defined in \eqref{calIdef} and appearing in \eqref{offdiagbounds} to $\Im(s) \leq (\log 5q)^2$ inccur an error of size 
\begin{align*}
\begin{split}
 \ll (|s_1|+1)^{\varepsilon}(|s_2|+1)^{\varepsilon}q^{-C},
\end{split}
\end{align*}
 for any constant $C$. \newline

  Observe further that when  $\Im(s) \leq (\log 5q)^2$, the bounds given in \eqref{Vbounds} are also satisfied by $V_i (i=1,2)$. Also,  note that the number of the effective summations over $k_1$ and $k_2$ is $O(\log ((|s_1|+4)(|s_2|+4))(\log q)^2)$. Moreover,
\begin{align*}
\begin{split}
\frac {1}{A^{\sigma_1}B^{\sigma_2}}&+\frac {(|s_1|+1)^{1-2\sigma_1}(|s_2|+1)^{1-2\sigma_2}q^{2(1-\sigma_1-\sigma_2)}}{A^{1-\sigma_1}B^{1-\sigma_2}} \\
\ll & \frac {(1+(|s_1|+1)^{1-2\sigma_1}(|s_2|+1)^{1-2\sigma_2}q^{2(1-\sigma_1-\sigma_2)})A^{|\sigma_1-1/2|}B^{|\sigma_2-1/2|}}{\sqrt{AB}} \\
\ll & (1+(|s_1|+1)^{1-2\sigma_1}(|s_2|+1)^{1-2\sigma_2}q^{2(1-\sigma_1-\sigma_2)})\big (|s_1|+1)^{1+\varepsilon}(|s_2|+1)^{1+\varepsilon}q^{2+\varepsilon}\big )^{|\sigma_1-1/2|+|\sigma_2-1/2|}\frac {1}{\sqrt{AB}}. 
\end{split}
\end{align*}
It follows from the above observations that the expression in \eqref{offdiagbounds} can be further bounded by
\begin{align}
\label{offdiagbounds1}
\begin{split}
 & (|s_1|+1)^{\varepsilon}(|s_2|+1)^{\varepsilon}q^{\varepsilon}(1+(|s_1|+1)^{1-2\sigma_1}(|s_2|+1)^{1-2\sigma_2}q^{2(1-\sigma_1-\sigma_2)})\big (|s_1|+1)(|s_2|+1)q^{2}\big )^{|\sigma_1-1/2|+|\sigma_2-1/2|} \\
& \times \max_{\substack{A, B \geq 1 \\AB \leq (|s_1|+1)^{1+\varepsilon}(|s_2|+1)^{1+\varepsilon}q^{2+\varepsilon}}} E(A,B) +(|s_1|+1)^{\varepsilon}(|s_2|+1)^{\varepsilon}q^{-C}(q+(|s_1|+1)^{1-2\sigma_1}(|s_2|+1)^{1-2\sigma_2}q^{1+2(1-\sigma_1-\sigma_2)}). 
\end{split}
\end{align}
  where 
\begin{align*}
\begin{split}
& E(A, B) \\
=& \frac {1}{\sqrt{AB}}\sum_{c | q}\Big| \mu\Big(\frac{q}{c} \Big) \phi(c) \Big|\Big (\Big |\sum_{\substack{ma \neq nb \\ (mn, q)=1 \\ ma \equiv nb \,\shortmod c}}
\lambda(m)\lambda(n) V_1 \left(\frac {m}{A}\right) V_2 \left(\frac {n}{B}\right) \Big |+\Big |\sum_{\substack{mb \neq na \\ (mn, q)=1 \\ mb \equiv na \,\shortmod c}}
\lambda(m)\lambda(n) V_3 \left(\frac {m}{A}\right) V_4 \left(\frac {n}{B}\right) \Big |\Big ).
\end{split}
\end{align*}

Now we must estimate $E(A, B)$ for integers $A, B \geq 1$, $AB \leq (|s_1|+1)^{1+\varepsilon}(|s_2|+1)^{1+\varepsilon}q^{2+\varepsilon}$ and functions $V_i, 1 \leq i \leq 4$ satisfying \eqref{Vbounds}. We notice that the estimation for $E(A, B)$ is the same as the one for $E(M, N)$ defined in \cite[(5.3)]{CH} with $k=1, X=1$ there. Without loss of generality, we may assume $A \leq B$ in the sequel.  The bounds in \eqref{lambdafbound} and \eqref{omegandnbound} trivially lead to
\begin{align*}
\begin{split}
& E(A, B) \ll \frac {q^{\varepsilon}}{\sqrt{AB}}\sum_{c | q}\Big| \mu\Big(\frac{q}{c} \Big) \phi(c) \Big| \Big (\sum_{\substack{ma \equiv nb \bmod c \\ A/2 \leq m \leq 2A, B/2 \leq n \leq 2B}}1+\sum_{\substack{mb \equiv na \bmod c \\ A/2 \leq m \leq 2A, B/2 \leq n \leq 2B}}1 \Big ) \ll q^{\varepsilon}(AB)^{1/2}.
\end{split}
\end{align*} 
The last estimate above emerges by noting that for a fixed $n$, there is only one $m \pmod c$ satisfying the condition $ma \equiv nb \,\pmod c$ or the condition $mb \equiv na \,\pmod c$. \newline

   It follows that we have 
\begin{align}
\label{Ebound1}
\begin{split}
 E(A, B) \ll q^{1-\eta/100+\varepsilon}, \quad AB < q^{2-\eta/25}. 
\end{split}
\end{align} 

  We may thus consider the case $q^{2-\eta/25} \leq AB  \leq (|s_1|+1)^{1+\varepsilon}(|s_2|+1)^{1+\varepsilon}q^{2+\varepsilon}$.  When $B<20A$, we have by \cite[(5.7)]{CH} that
\begin{align}
\label{Ebound2}
\begin{split}
& E(A, B) \ll q^{\varepsilon}\frac {B}{\sqrt{A}} \ll q^{\varepsilon}\sqrt{A} \ll (|s_1|+1)^{1/4+\varepsilon}(|s_2|+1)^{1/4+\varepsilon}q^{1/2+\varepsilon},
\end{split}
\end{align} 
  where the last inequality above follows as $A \leq B$, we have $A \leq (AB)^{1/2} \leq (|s_1|+1)^{1/2+\varepsilon}(|s_2|+1)^{1/2+\varepsilon}q^{1+\varepsilon}$. Thus, we may further assume that $q^{2-\eta/25}  \leq AB  \leq (|s_1|+1)^{1+\varepsilon}(|s_2|+1)^{1+\varepsilon}q^{2+\varepsilon}$, $B \geq 20A$. In which case, we apply \cite[(5.6)]{CH} to see that when $B/A < q^{1-\eta/5}$, 
\begin{align}
\label{Ebound2'}
\begin{split}
 E(A, B) \ll & q^{\varepsilon}\Big(\Big(\frac {B}{A}\Big)^{1/2}q^{1/2}+\Big(\frac {B}{A}\Big)^{1/2}(AB)^{1/4}+\Big(\frac {B}{A}\Big)^{1/4}q^{3/4}+\Big(\frac {B}{A}\Big)^{1/4}(AB)^{1/4}q^{1/4}) \\
\ll & q^{\varepsilon}\Big(\Big(\frac {B}{A}\Big)^{1/2}q^{1/2}+\Big(\frac {B}{A}\Big)^{1/4}q^{1/4}\Big) \ll q^{1-\eta+\varepsilon}.
\end{split}
\end{align}  

   We are now left with the case  $q^{2-\eta/25} \leq AB  \leq (|s_1|+1)^{1+\varepsilon}(|s_2|+1)^{1+\varepsilon}q^{2+\varepsilon}$, $B \geq 20A$ and that $B \geq q^{1-\eta/5}A$. When $q$ is large enough, the condition $B \geq q^{1-\eta/5}A$ implies that $B \geq 20A$. 
Thus, it remains to estimate $E(A, B)$ for integers $A, B \geq 1$ satisfying $q^{2-\eta/25} \leq AB  \leq (|s_1|+1)^{1+\varepsilon}(|s_2|+1)^{1+\varepsilon}q^{2+\varepsilon}, B \geq q^{1-\eta/5}A$.  If there exists a divisor $q_0$ of $q$ such that $q/q_0$ is odd and $q^{\eta} \ll q_0 \ll q^{1/2-\eta}$ for some $0<\eta <1/2$, \cite[(5.11)]{CH} allows us to deduce that
\begin{align}
\label{Ebound3}
\begin{split}
 E(A, B)  \ll & q^{\varepsilon}\Big((AB)^{1/4}\Big(\frac {B}{A}\Big)^{-1/4}q^{1/2}q^{1/2}_0+\Big(\frac {B}{A}\Big)^{-1/2}q^{5/4}q^{1/4}_0+(AB)^{-1/4}\Big(\frac {B}{A}\Big)^{-1/4}q^{7/4}q^{-1/4}_0) \\
\ll & q^{\varepsilon}((|s_1|+1)^{1/4+\varepsilon}(|s_2|+1)^{1/4+\varepsilon}q^{1/2}q^{-1/4+\eta/20}q^{1/2}q^{1/4-\eta/2} \\
& \hspace*{2cm} +q^{-1/2+\eta/10}q^{5/4}q^{1/8-\eta/4}+q^{-1/2+\eta/100}q^{-1/4+\eta/20}
q^{7/4}q^{-\eta/4}) \\
\ll & (|s_1|+1)^{1/4+\varepsilon}(|s_2|+1)^{1/4+\varepsilon}q^{1-19\eta/20+\varepsilon}+q^{7/8-3\eta/20+\varepsilon}+q^{1-19\eta/100+\varepsilon} \\
\ll & (|s_1|+1)^{1/4+\varepsilon}(|s_2|+1)^{1/4+\varepsilon}q^{1-19\eta/20+\varepsilon}+q^{1-19\eta/100+\varepsilon} .
\end{split}
\end{align} 

    We conclude from \eqref{scondition}, \eqref{offdiagbounds1}--\eqref{Ebound3} that by taking $C$ large enough, the expression in \eqref{offdiagbounds} is bounded by
\begin{align}
\label{offdiagboundsqcomposite}
\begin{split}
 (|s_1|&+1)^{\varepsilon}(|s_2|+1)^{\varepsilon}q^{\varepsilon}(1+(|s_1|+1)^{1-2\sigma_1}(|s_2|+1)^{1-2\sigma_2}q^{2(1-\sigma_1-\sigma_2)})\big (|s_1|+1)(|s_2|+1)q^{2}\big )^{|\sigma_1-1/2|+|\sigma_2-1/2|} \\
& \hspace*{2cm} \times \Big (q^{1-\eta/100+\varepsilon}+(|s_1|+1)^{1/4+\varepsilon}(|s_2|+1)^{1/4+\varepsilon}q^{1-19\eta/20+\varepsilon} \Big ) \\
& \hspace*{3cm} +(|s_1|+1)^{\varepsilon}(|s_2|+1)^{\varepsilon}q^{-C}(q+(|s_1|+1)^{1-2\sigma_1}(|s_2|+1)^{1-2\sigma_2}q^{1+2(1-\sigma_1-\sigma_2)}) \\
& \ll  q^{\varepsilon}(1+(|s_1|+1)^{1-2\sigma_1}(|s_2|+1)^{1-2\sigma_2}q^{2(1-\sigma_1-\sigma_2)})\big (|s_1|+1)(|s_2|+1)q^{2}\big )^{|\sigma_1-1/2|+|\sigma_2-1/2|}  \Big (q^{1-\eta/100}+q^{1-\varepsilon_0} \Big ). 
\end{split}
\end{align}

If $q$ is a prime, there are only two possible values of $c$ in the last display of \eqref{LNsquaresum1}:
$c=1$ and $c=q$. By \eqref{contributionc=1}, it remains to consider the case $c=q$ in the last display of \eqref{LNsquaresum1}. We note first that when $(mn,q)>1$, then either $q|m$ or $q|n$ but the conditions $ma \equiv nb \pmod q, mb \equiv na \pmod p$ and $(ab, q)=1$ then imply that $q|m$ and $q|n$ must hold simultaneously.  Therefore, by \eqref{lambdafbound} and \eqref{omegandnbound}, we see that removing the condition $(mn, q)=1$ in $E(A,B)$ leads to an error of size
\begin{align*}
\begin{split}
& \ll \frac {q^{1+\varepsilon}}{\sqrt{AB}}\sum_{\substack{m, n  \\ A/2 \leq mq \leq 2A, B/2 \leq nq \leq 2B}}1 \ll q^{-1+\varepsilon}(AB)^{1/2} \ll
(|s_1|+1)^{1/2+\varepsilon}(|s_2|+1)^{1/2+\varepsilon}q^{\varepsilon}.
\end{split}
\end{align*} 

  We thus derive from \eqref{contributionc=1},  \eqref{offdiagbounds1}--\eqref{Ebound2} and the above that \eqref{offdiagbounds} is majorized by
\begin{align}
\label{offdiagbounds2}
\begin{split}
 & (|s_1|+1)^{\varepsilon}(|s_2|+1)^{\varepsilon}q^{1+\varepsilon}(1+(|s_1|+1)^{1-2\sigma_1}(|s_2|+1)^{1-2\sigma_2}q^{2(1-\sigma_1-\sigma_2)})\big (|s_1|+1)(|s_2|+1)q^{2}\big )^{|\sigma_1-1/2|+|\sigma_2-1/2|} \\
& \hspace*{2cm} \times \max_{\substack{A, B \geq 1 \\ q^{2-\eta/25} \leq AB \leq (|s_1|+1)^{1+\varepsilon}(|s_2|+1)^{1+\varepsilon}q^{2+\varepsilon}\\ B \geq q^{1-\eta/5}A}} E'(A,B) \\
& + (|s_1|+1)^{\varepsilon}(|s_2|+1)^{\varepsilon}q^{\varepsilon}(1+(|s_1|+1)^{1-2\sigma_1}(|s_2|+1)^{1-2\sigma_2}q^{2(1-\sigma_1-\sigma_2)})\big (|s_1|+1)(|s_2|+1)q^{2}\big )^{|\sigma_1-1/2|+|\sigma_2-1/2|} \\
& \hspace*{2cm} \times \Big (q^{1-\eta/100+\varepsilon}+(|s_1|+1)^{1/4+\varepsilon}(|s_2|+1)^{1/4+\varepsilon}q^{1/2+\varepsilon} \Big ) \\
&+(|s_1|+1)^{\varepsilon}(|s_2|+1)^{\varepsilon}q^{-C}(q+(|s_1|+1)^{1-2\sigma_1}(|s_2|+1)^{1-2\sigma_2}q^{1+2(1-\sigma_1-\sigma_2)})\\
&+q^{\varepsilon}\Big (1+(|s_1|+1)^{1-2\sigma_1}(|s_2|+1)^{1-2\sigma_2}q^{2(1-\sigma_1-\sigma_2)}\Big )+(|s_1|+1)^{1/2+\varepsilon}(|s_2|+1)^{1/2+\varepsilon}q^{\varepsilon},
\end{split}
\end{align}
  where 
\begin{align} \label{E'def}
E'(A, B) = \frac {1}{\sqrt{AB}}\Big (\Big |\sum_{\substack{ma \neq nb  \\ ma \equiv nb \bmod q}}
\lambda(m)\lambda(n) V_1 \left(\frac {m}{A}\right) V_2 \left(\frac {n}{B}\right) \Big |+\Big |\sum_{\substack{mb \neq na  \\ mb \equiv na \bmod c}}
\lambda(m)\lambda(n) V_3 \left(\frac {m}{A}\right) V_4 \left(\frac {n}{B}\right) \Big |\Big ).
\end{align}

   Note that
\begin{align}
\label{offdiagbounds3}
\begin{split}
 &\max_{\substack{A, B \geq 1 \\ q^{2-\eta/25} \leq AB \leq (|s_1|+1)^{1+\varepsilon}(|s_2|+1)^{1+\varepsilon}q^{2+\varepsilon}\\ B \geq q^{1-\eta/5}A}} E'(A,B) \ll  \max_{\substack{A, B \geq 1 \\ q^{2-2\eta} \leq AB \leq (|s_1|+1)^{1+\varepsilon}(|s_2|+1)^{1+\varepsilon}q^{2+\varepsilon}\\ B \geq q^{1-4\eta}A}} E'(A,B)
\end{split}
\end{align}
 
   Moreover, due to similarities between its two constituent sums in \eqref{E'def}, 
\begin{align}
\label{Esimbound}
\begin{split}
 E'(A, B) \ll & \frac {1}{\sqrt{AB}}\Big |\sum_{\substack{ma \neq nb  \\ ma \equiv nb \bmod q}}
\lambda(m)\lambda(n) V_1 \left(\frac {m}{A}\right) V_2 \left(\frac {n}{B}\right) \Big |.
\end{split}
\end{align}

   Further, it follows from \cite[(3.3)]{BFKMM} and the paragraph below it together with the estimations given in \eqref{lambdafbound} and \eqref{omegandnbound} that
\begin{align}
\label{Eextraterm}
\begin{split}
\frac {1}{q\sqrt{AB}}\sum_{\substack{m, n}}
\lambda(m)\lambda(n) V_1 \left(\frac {m}{A}\right) V_2 \left(\frac {n}{B}\right) \ll & \frac {q^{\varepsilon}}{q\sqrt{AB}}.
\end{split}
\end{align}

   We now deduce from \eqref{offdiagbounds2}--\eqref{Eextraterm} that the expression in \eqref{offdiagbounds} is bounded by
\begin{align}
\label{offdiagbounds4}
\begin{split}
 & (|s_1|+1)^{\varepsilon}(|s_2|+1)^{\varepsilon}q^{1+\varepsilon}(1+(|s_1|+1)^{1-2\sigma_1}(|s_2|+1)^{1-2\sigma_2}q^{2(1-\sigma_1-\sigma_2)})\big (|s_1|+1)(|s_2|+1)q^{2}\big )^{|\sigma_1-1/2|+|\sigma_2-1/2|} \\
& \hspace*{2cm} \times \max_{\substack{A, B \geq 1 \\ q^{2-2\eta} \leq AB \leq (|s_1|+1)^{1+\varepsilon}(|s_2|+1)^{1+\varepsilon}q^{2+\varepsilon}\\ B \geq q^{1-4\eta}A}} \mathcal{E}(A, B) \\
& + (|s_1|+1)^{\varepsilon}(|s_2|+1)^{\varepsilon}q^{\varepsilon}(1+(|s_1|+1)^{1-2\sigma_1}(|s_2|+1)^{1-2\sigma_2}q^{2(1-\sigma_1-\sigma_2)})\big (|s_1|+1)(|s_2|+1)q^{2}\big )^{|\sigma_1-1/2|+|\sigma_2-1/2|} \\
& \hspace*{2cm} \times \Big (q^{1-\eta/100+\varepsilon}+(|s_1|+1)^{1/4+\varepsilon}(|s_2|+1)^{1/4+\varepsilon}q^{1/2+\varepsilon} \Big ) \\
&+(|s_1|+1)^{\varepsilon}(|s_2|+1)^{\varepsilon}q^{-C}(q+(|s_1|+1)^{1-2\sigma_1}(|s_2|+1)^{1-2\sigma_2}q^{1+2(1-\sigma_1-\sigma_2)})\\
&+q^{\varepsilon}\Big (1+(|s_1|+1)^{1-2\sigma_1}(|s_2|+1)^{1-2\sigma_2}q^{2(1-\sigma_1-\sigma_2)}\Big )+(|s_1|+1)^{1/2+\varepsilon}(|s_2|+1)^{1/2+\varepsilon}q^{\varepsilon},
\end{split}
\end{align}
  where
\begin{align*}
\begin{split}
& \mathcal{E}(A, B)=\frac {1}{\sqrt{AB}}\sum_{\substack{ma \neq nb \\ ma \equiv nb \bmod q}}
\lambda(m)\lambda(n) V_1 \left(\frac {m}{A}\right) V_2 \left(\frac {n}{B}\right)-\frac {1}{q\sqrt{AB}}\sum_{\substack{m, n}}
\lambda(m)\lambda(n) V_1 \left(\frac {m}{A}\right) V_2 \left(\frac {n}{B}\right).
\end{split}
\end{align*}
  
  We now estimate $\mathcal{E}(A,B)$ following the treatment in \cite[Section 6.2]{BFKMM} for the quantity $B^{\pm}_{f,g}(M,N)$ defined in \cite[(6.4)]{BFKMM}.
Note that the conditions  $q^{2-2\eta} \leq AB \leq (|s_1|+1)^{1+\varepsilon}(|s_2|+1)^{1+\varepsilon}q^{2+\varepsilon}, B \geq q^{1-4\eta}A$ imply that $B \geq q^{3/2-3\eta}$ and that we have $A \leq (AB)^{1/2} \leq (|s_1|+1)^{1/2+\varepsilon}(|s_2|+1)^{1/2+\varepsilon}q^{1+\varepsilon}$ as $A \leq B$.  It follows that upon taking $\varepsilon$ small enough, we can make the condition that $ma \neq nb$ vacuous if $(|s_1|+1)(|s_2|+1) \ll q^{1/4-\eta_0}$ and $a, b \leq q^{1/4}$.
  We further apply the additive characters to detect the condition $ma \equiv nb \, \pmod q$ to see that
\begin{align*}
\begin{split}
& \mathcal{E}(A, B)=\frac {1}{q\sqrt{AB}}\sum_{\substack{m, n}}
\lambda(m)\lambda(n) V_1 \left(\frac {m}{A}\right) V_2 \left(\frac {n}{B}\right)\sumstar_{c \bmod q}e\Big(\frac {(am-bn)c}{q} \Big ).
\end{split}
\end{align*}
  For any integer $(n, q)=1$, we denote $\overline{n}$ for a (fixed) integer satisfying $n\overline{n} \equiv 1 \pmod q$. We then apply the Voronoi summation formula given in \cite[Lemma 2.3]{BFKMM} to arrive at
\begin{align*}
\begin{split}
& \mathcal{E}(A, B)=\frac {1}{q\sqrt{AB^*}}\sum_{\substack{m, n}}
\lambda(m)\lambda(n) V_1 \left(\frac {m}{A}\right) \frac{1}{B} \widetilde{V}_{2,B} \left(\frac {n}{q^2}\right)
\sumstar_{c \shortmod q}e\Big(\frac {amc + n \overline{bc}}{q} \Big ),
\end{split}
\end{align*}
  where $B^*=q^2/B$ and
\begin{align*}
\begin{split}
 \widetilde{V}_{2,B}(y) = \int\limits^{\infty}_0V_2\Big(\frac xB\Big)\mathcal{J}(4\pi\sqrt{xy}) \dif x, \quad \mbox{with} \quad \mathcal{J}(x)=2\pi i^{\kappa}J_{\kappa-1}(x).
\end{split}
\end{align*}
  Here $J_{\kappa-1}(x)$ is the $J$-Bessel function. \newline

It is shown in \cite[Lemma 2.4]{BFKMM} that the functions $y \mapsto \widetilde{V}_{2,B} \left(y/q^2 \right)/B$ decays rapidly for $y \geq q^{\varepsilon}B^*$ so that we may assume further that $n \leq q^{\varepsilon}B^* <q$. Thus $n$ is invertible modulo $q$ and we can recast $\mathcal{E}(A,B)$ as
\begin{align*}
\begin{split}
& \mathcal{E}(A, B)=\frac {1}{q\sqrt{AB^*}} \sum_{\substack{m, n}}
\lambda(m)\lambda(n) V_1 \left(\frac {m}{A}\right) \frac{1}{B} \widetilde{V}_{2,B} \left(\frac {n}{q^2}\right) S(abm\overline{n}, 1, q),
\end{split}
\end{align*}
  where $S$ is the Kloosterman sum defined by
\begin{align*}
  S(u,v, q)=\sumstar_{\substack{ h \shortmod q }} e \Big(\frac {uh+v\overline h}{q}\Big).
\end{align*}

By virtue of the well-known Weil's bound for the Kloosterman sum given as in \cite[Corollary 11.12]{iwakow}, we see that $|S(abm\overline{n}, 1, q)| \leq 2q^{1/2}$. It follows from the above and the Cauchy-Schwarz
inequality that we have
\begin{align*}
\begin{split}
& \mathcal{E}(A, B) \ll  \frac {1}{\sqrt{qAB^*}} \sum_{\substack{m \ll A, n \ll q^{\varepsilon}B^*}}
|\lambda(m)\lambda(n)| \ll \frac {1}{\sqrt{qAB^*}}\Big(\sum_{\substack{m \ll A, n \ll q^{\varepsilon}B^*}}|\lambda(m)|^2\Big)^{1/2}\Big(\sum_{\substack{m \ll A, n \ll q^{\varepsilon}B^*}}|\lambda(n)|^2\Big)^{1/2}.
\end{split}
\end{align*}

   Note that by \cite[(2.4)]{BFKMM}, we have for any $x \geq 1$ and any $\varepsilon>0$,
\begin{align*}
\begin{split}
 \sum_{\substack{n \leq x}}|\lambda(n)|^2 \ll x^{1+\varepsilon}.
\end{split}
\end{align*}

   It follows from this that when $A/B < q^{-1-2\eta}$, we have
\begin{align}
\label{mathcalEbound1}
\begin{split}
& \mathcal{E}(A, B) \ll q^{-1/2+\varepsilon}\sqrt{AB^*}\ll  q^{-\eta+\varepsilon}. 
\end{split}
\end{align}
   It therefore remains to consider the case $A/B \geq q^{-1-2\eta}$ which is equivalent to $AB^* \geq q^{1-2\eta}$. Note further that the condition $B/A  \geq q^{1-4\eta}$ is equivalent to $AB^* \leq q^{1+4\eta}$. 
  Moreover, the condition $q^{2-2\eta}\leq AB  \leq (|s_1|+1)^{1+\varepsilon}(|s_2|+1)^{1+\varepsilon}q^{2+\varepsilon}$ implies
that $q^{-2\eta}< A/B^* \leq (|s_1|+1)^{1+\varepsilon}(|s_2|+1)^{1+\varepsilon}q^{\varepsilon}$. Together with the condition that $AB^* \geq q^{1-2\eta}$, this further implies that $q^{1/2-2\eta}\leq A$.  In this case we apply \cite[Proposition 5.5]{BFKMM} (note that this proposition originally assumes
\cite[Conjecture 5.7]{BFKMM} and is fully established in \cite[Theorem 1.1]{KMS17}) to arrive at in this case we also have
\begin{align}
\label{mathcalEbound2}
\begin{split}
& \mathcal{E}(A, B) \ll  q^{-1/2+\varepsilon}\sqrt{AB^*}(A^{-1/2}+q^{11/64}(AB^*)^{-3/16}) \ll  q^{-\eta+\varepsilon},
\end{split}
\end{align}
  provided that we have $A \leq q^{1/4}B^*$. As we have $(|s_1|+1)(|s_2|+1)\ll q^{1/4-\varepsilon_0}$, we see that this condition is implied by the condition that 
$A/B^* \leq (|s_1|+1)^{1+\varepsilon}(|s_2|+1)^{1+\varepsilon}q^{\varepsilon}$. Thus the estimation obtained in  is valid. \newline

  We conclude from \eqref{scondition}, \eqref{offdiagbounds4}--\eqref{mathcalEbound2} that by taking $C$ large enough, the expression in \eqref{offdiagbounds} is bounded by
\begin{align}
\label{offdiagbounds5}
\begin{split}
 & (|s_1|+1)^{\varepsilon}(|s_2|+1)^{\varepsilon}q^{1-\eta+\varepsilon}(1+(|s_1|+1)^{1-2\sigma_1}(|s_2|+1)^{1-2\sigma_2}q^{2(1-\sigma_1-\sigma_2)})\big (|s_1|+1)(|s_2|+1)q^{2}\big )^{|\sigma_1-1/2|+|\sigma_2-1/2|} \\
& + (|s_1|+1)^{\varepsilon}(|s_2|+1)^{\varepsilon}q^{\varepsilon}(1+(|s_1|+1)^{1-2\sigma_1}(|s_2|+1)^{1-2\sigma_2}q^{2(1-\sigma_1-\sigma_2)})\big (|s_1|+1)(|s_2|+1)q^{2}\big )^{|\sigma_1-1/2|+|\sigma_2-1/2|} \\
& \hspace*{2cm} \times \Big (q^{1-\eta/100+\varepsilon}+(|s_1|+1)^{1/4+\varepsilon}(|s_2|+1)^{1/4+\varepsilon}q^{1/2+\varepsilon} \Big ) \\
&+(|s_1|+1)^{\varepsilon}(|s_2|+1)^{\varepsilon}q^{-C}(q+(|s_1|+1)^{1-2\sigma_1}(|s_2|+1)^{1-2\sigma_2}q^{1+2(1-\sigma_1-\sigma_2)})\\
&+q^{\varepsilon}\Big (1+(|s_1|+1)^{1-2\sigma_1}(|s_2|+1)^{1-2\sigma_2}q^{2(1-\sigma_1-\sigma_2)}\Big )+(|s_1|+1)^{1/2+\varepsilon}(|s_2|+1)^{1/2+\varepsilon}q^{\varepsilon} \\
& \hspace*{1cm} \ll  q^{1-\eta/100+\varepsilon}(1+(|s_1|+1)^{1-2\sigma_1}(|s_2|+1)^{1-2\sigma_2}q^{2(1-\sigma_1-\sigma_2)})\big (|s_1|+1)(|s_2|+1)q^{2}\big )^{|\sigma_1-1/2|+|\sigma_2-1/2|}. 
\end{split}
\end{align}

\subsection{Conclusion}

  We now deduce the expression in \eqref{twistedsecondmoment} for case i) from \eqref{mainterm1} and \eqref{offdiagboundsqcomposite}. We also deduce the expression in \eqref{twistedsecondmoment} for case ii) from \eqref{mainterm1} and \eqref{offdiagbounds5}.

\section{Proofs of Theorem \ref{thmlowerbound}--\ref{thmupperbound}}
\label{sec 2'}

\subsection{Initial Treatments}
   
 As the case $k=0$ is trivial and the case $k=1$ follows from Corollary \ref{cortwistedsecondmomenttspecial} by setting $a=b=1$ there, we consider only the case $0< k \neq 1$ in what follows.  Let $N, M$ be two large natural numbers depending on $k$ only and $\{ \ell_j \}_{1 \leq j \leq R}$ a sequence of even natural
  numbers with $\ell_1= 2\lceil N \log \log q\rceil$ and $\ell_{j+1} = 2 \lceil N \log \ell_j \rceil$ for $j \geq 1$, where $R$ is the largest natural number such that $\ell_R >10^M$. \newline

Write further ${ P}_1$  the set of odd primes not exceeding $q^{1/\ell_1^2}$ and
${P_j}$ the set of primes lying in the interval $(q^{1/\ell_{j-1}^2}, q^{1/\ell_j^2}]$ for $2\le j\le R$. Define for each $1 \leq j \leq R$, 
\begin{equation*}
{\mathcal P}_j(t, \chi) = \sum_{p\in P_j} \frac{\lambda_f(p)}{p^{1/2+it}} \chi(p) \quad  \mbox{and} \quad {\mathcal Q}_j(t,\chi, k) =\Big (\frac{c_k {\mathcal
P}_j(t, \chi) }{\ell_j}\Big)^{r_k\ell_j},
\end{equation*}
  where
\begin{align*}
  c_k=  64 \max (1, k) \quad \mbox{and} \quad 
r_k = \left\{
 \begin{array}
  [c]{ll}
   2 & k>1,\\
  \lceil 1+1/k \rceil+1 & k<1.
 \end{array}
 \right.
\end{align*}
  We also set ${\mathcal Q}_{R+1}(t, \chi, k)=1$. \newline

Furthermore, we define for each $1 \leq j \leq R$ and any real number $\alpha$,
\begin{align*}
{\mathcal N}_j(t, \chi, \alpha) = E_{\ell_j} (\alpha {\mathcal P}_j(t,\chi)) \quad \mbox{and} \quad  \mathcal{N}(t, \chi, \alpha) = \prod_{j=1}^{R} {\mathcal
N}_j(t,\chi,\alpha),
\end{align*}
   where, for any non-negative integer $\ell$ and any real number $x$, 
\begin{equation*}
E_{\ell}(x) = \sum_{j=0}^{\ell} \frac{x^{j}}{j!}.
\end{equation*}

 In what follows, we follow the convention that an empty product equals $1$.  Also, in the remainder of the paper, the implied constants in $\ll$ or the $O$-symbol  depend on $k$ only.  \newline

  Now, arguing as in the proofs of \cite[Lemma 3.1-3.2]{GHH} by applying the lower bounds principle of W. Heap and K. Soundararajanand in \cite{H&Sound} and the
  upper bounds principle of M. Radziwi{\l\l} and K. Soundararajan in \cite{Radziwill&Sound}, we arrive at the following analogues of \cite[Lemmas 3.1, 3.2]{GHH}. 
\begin{lemma}
\label{lem1}
 With notations as above, for $0<k<1$,
\begin{align}
\label{basiclowerbound}
\begin{split}
\sumstar_{\substack{ \chi \shortmod q }} & L(\tfrac{1}{2}+it,f \otimes \chi)  \mathcal{N}(t, \chi, k-1) \mathcal{N}(-t, \overline{\chi}, k) \\
&  \ll \Big ( \sumstar_{\substack{ \chi \shortmod q }}|L(\tfrac{1}{2}+it, f \otimes \chi)|^{2k} \Big )^{1/2}\Big ( \sumstar_{\substack{ \chi \shortmod q
 }}|L(\tfrac{1}{2}+it, f \otimes  \chi)|^2 |\mathcal{N}(t, \chi, k-1)|^2  \Big)^{(1-k)/2} \\
 & \hspace*{2cm} \times \Big ( \sumstar_{\substack{ \chi \shortmod q }}   \prod^R_{j=1}\big ( |{\mathcal N}_j(t, \chi, k)|^2+ |{\mathcal Q}_j(t, \chi,k)|^2 \big )
 \Big)^{k/2}.
\end{split}
\end{align}
 For $k>1$,
\begin{align}
\label{basicboundkbig}
\begin{split}
\sumstar_{\substack{ \chi \shortmod q }} & L(\tfrac{1}{2}+it, f \otimes \chi)  \mathcal{N}(t, \chi, k-1) \mathcal{N}(-t,\overline{\chi}, k) \\
 & \ll  \Big ( \sumstar_{\substack{ \chi \shortmod q }}|L(\tfrac{1}{2}+it,f \otimes  \chi)|^{2k} \Big )^{1/2k}\Big ( \sumstar_{\substack{ \chi
 \shortmod q }} \prod^R_{j=1} \big ( |{\mathcal N}_j(t, \chi, k)|^2+ |{\mathcal Q}_j(t, \chi,k)|^2 \big ) \Big)^{(2k-1)/(2k)}.
\end{split}
\end{align}
\end{lemma}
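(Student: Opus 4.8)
\textbf{Plan of proof for Lemma \ref{lem1}.} The strategy is to adapt the lower bounds principle of Heap--Soundararajan \cite{H&Sound} (for the case $k>1$) and the upper bounds principle of Radziwi{\l\l}--Soundararajan \cite{Radziwill&Sound} (together with the Heap--Soundararajan idea, for $0<k<1$) to our family of twisted modular $L$-functions, exactly as in the proof of \cite[Lemmas 3.1, 3.2]{GHH} for the case $t=0$. The role of $t$ is completely inert in this argument: every inequality used is a pointwise estimate in $\chi$ with $t$ a fixed parameter, so the presence of $\tfrac12+it$ in place of $\tfrac12$ changes nothing structurally. I would state this at the outset and then carry out the two cases.

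\textbf{Case $0<k<1$.} First I would apply H\"older's inequality with exponents $\tfrac1k$, $\tfrac{2}{1-k}$ and $\tfrac2k$ (which are conjugate since $k + \tfrac{1-k}{2}\cdot 2 + \tfrac{k}{2}\cdot 2 = 1 + k$, so one must be careful: the correct split is to write the summand as a product of three pieces and apply H\"older with exponents $2$, $\tfrac{2}{1-k}$, $\tfrac{2}{k}$) to the sum $\sumstar_{\chi} L(\tfrac12+it, f\otimes\chi)\mathcal{N}(t,\chi,k-1)\mathcal{N}(-t,\overline\chi,k)$. Concretely: bound $|L(\tfrac12+it,f\otimes\chi)\mathcal N(t,\chi,k-1)\mathcal N(-t,\overline\chi,k)|$ as $|L|^{k}\cdot\big(|L||\mathcal N(t,\chi,k-1)|^{2/(1-k)}\big)^{(1-k)/2}\cdot\big(|\mathcal N(-t,\overline\chi,k)|^{2/k}\big)^{k/2}$ — no, more simply, split as $|L|^k \cdot \big(|L|\,|\mathcal N(t,\chi,k-1)|^2\big)^{(1-k)/2}\cdot |\mathcal N(-t,\overline\chi,k)|^{k}$ and apply H\"older with exponents $(1/k,\ 2/(1-k),\ 2/k)$. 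The first factor then produces the $2k$-th moment, the second the twisted second moment with $\mathcal N(t,\chi,k-1)$, and the third factor $\sumstar_\chi |\mathcal N(-t,\overline\chi,k)|^{2}$, which I would then dominate by $\sumstar_\chi\prod_{j=1}^R\big(|\mathcal N_j(t,\chi,k)|^2+|\mathcal Q_j(t,\chi,k)|^2\big)$ using the standard pointwise inequality $|\mathcal N(-t,\overline\chi,k)|^2 \le \prod_j\big(|\mathcal N_j|^2+|\mathcal Q_j|^2\big)$ coming from $|E_\ell(x)| \le |E_\ell(x)| + $ the tail correction $\mathcal Q_j$ of \cite{H&Sound} (this is where the $\mathcal Q_j$'s enter: they control the difference between $E_{\ell_j}$ and $\exp$ on the bad set where the Dirichlet polynomial $\mathcal P_j$ is large, and the inequality $|e^z|\le|E_\ell(z)|\cdot\text{(correction)}$ is purely algebraic).

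\textbf{Case $k>1$.} Here I would instead apply H\"older with exponents $2k$ and $\tfrac{2k}{2k-1}$: write the summand as $|L|\cdot|\mathcal N(t,\chi,k-1)\mathcal N(-t,\overline\chi,k)|$, take the $2k$-th power out of the first factor and the $\tfrac{2k}{2k-1}$-th power out of the second. The first gives $\big(\sumstar_\chi|L|^{2k}\big)^{1/2k}$; for the second, use $|\mathcal N(t,\chi,k-1)\mathcal N(-t,\overline\chi,k)|^{2k/(2k-1)} \le \prod_j\big(|\mathcal N_j(t,\chi,k)|^2+|\mathcal Q_j(t,\chi,k)|^2\big)$, again by the elementary Euler-product majorization of \cite{H&Sound}; since $r_k=2$ and $c_k$ is chosen large, the exponents work out so that the product of the two $\mathcal N$-factors is dominated by this single product over $j$. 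The one subtlety I would flag is verifying that the numerical constants $c_k$, $r_k$ and the lengths $\ell_j$ are chosen so these pointwise majorizations hold with the stated exponents; this is precisely the bookkeeping done in \cite[Lemmas 3.1, 3.2]{GHH}, and since it is insensitive to the shift $t$, I would simply cite it rather than reproduce it.

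\textbf{Main obstacle.} There is essentially no real obstacle here: the lemma is a formal consequence of H\"older plus the Heap--Soundararajan algebraic inequalities, all of which are pointwise in $\chi$ and hence unaffected by the fixed shift $t$. The only thing requiring care — and the point one must check to be sure the $t$-dependence is genuinely harmless — is that the Dirichlet polynomials $\mathcal P_j(t,\chi)$ and their mollifier analogues still satisfy the same trivial size bounds (they do, since $|p^{-it}|=1$), so that the partition of $E_{\ell_j}$ into its main part and the tail $\mathcal Q_j$ goes through verbatim. I would therefore present the proof as: "Arguing exactly as in \cite[Lemmas 3.1, 3.2]{GHH}, replacing $\tfrac12$ by $\tfrac12+it$ throughout and noting that all estimates are pointwise in $\chi$ with $t$ fixed, we obtain \eqref{basiclowerbound} and \eqref{basicboundkbig}," with the H\"older splits spelled out as above.
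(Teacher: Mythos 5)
Your overall strategy coincides with the paper's: the paper proves Lemma~\ref{lem1} by invoking \cite[Lemmas 3.1--3.2]{GHH} and noting that the Heap--Soundararajan and Radziwi{\l\l}--Soundararajan arguments are pointwise in $\chi$ with $t$ a fixed parameter, so the shift from $\tfrac12$ to $\tfrac12+it$ is structurally harmless. Your closing sentence, and your $k>1$ split (H\"older with exponents $2k$ and $2k/(2k-1)$, then dominating $|\mathcal N(t,\chi,k-1)\mathcal N(-t,\overline\chi,k)|^{2k/(2k-1)}$ pointwise by $\prod_j(|\mathcal N_j(t,\chi,k)|^2+|\mathcal Q_j(t,\chi,k)|^2)$), are both correct and match the intended argument.

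However, the $0<k<1$ decomposition as you have actually written it is wrong, in two ways. First, the triple of H\"older exponents $(1/k,\,2/(1-k),\,2/k)$ that you finally settle on does not satisfy $1/p_1+1/p_2+1/p_3=1$; the reciprocals sum to $k+\tfrac12$, which equals $1$ only at $k=\tfrac12$. The earlier triple $(2,\,2/(1-k),\,2/k)$ you mention in passing is the correct one. Second, and more substantively, with exponents $(2,\,2/(1-k),\,2/k)$ the first two pieces are forced to be $|L|^{k}$ and $|L|^{1-k}|\mathcal N(t,\chi,k-1)|^{1-k}$ (so that raising them to powers $2$ and $2/(1-k)$ reproduces the first two factors of \eqref{basiclowerbound}); their product is $|L|\,|\mathcal N(t,\chi,k-1)|^{1-k}$, and dividing into the summand $|L|\,|\mathcal N(t,\chi,k-1)|\,|\mathcal N(-t,\overline\chi,k)|$ forces the third piece to be
\[
|\mathcal N(t,\chi,k-1)|^{k}\,|\mathcal N(-t,\overline\chi,k)|,
\]
not $|\mathcal N(-t,\overline\chi,k)|^{k}$. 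Raising this to the power $2/k$ gives $|\mathcal N(t,\chi,k-1)|^{2}\,|\mathcal N(-t,\overline\chi,k)|^{2/k}$, and it is \emph{this} quantity that the Heap--Soundararajan majorization must control by $\prod_j\bigl(|\mathcal N_j(t,\chi,k)|^2+|\mathcal Q_j(t,\chi,k)|^2\bigr)$, using that $\mathcal P_j(-t,\overline\chi)=\overline{\mathcal P_j(t,\chi)}$ so that on the range where $|\mathcal P_j|$ is small one has $|\mathcal N_j(t,\chi,k-1)|^{2}|\mathcal N_j(-t,\overline\chi,k)|^{2/k}\approx e^{2k\Re\mathcal P_j}\approx |\mathcal N_j(t,\chi,k)|^2$, with $|\mathcal Q_j(t,\chi,k)|^2$ absorbing the complementary range. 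Dropping the $\mathcal N(t,\chi,k-1)$ contribution from the third factor, as your two attempted decompositions both do, leaves a product unequal to the original summand (they give $|L|^{(k+1)/2}$ times lower powers of the $\mathcal N$'s), so H\"older never applies. Since your ultimate plan is simply to cite \cite{GHH}, this can be repaired verbatim, but the detailed split you spell out should be corrected as above.
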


\begin{lemma}
\label{lem2}
 With notations as above, for $0<k<1$, we have
\begin{align*}
\begin{split}
\sumstar_{\substack{ \chi \shortmod q }} & |L(\tfrac{1}{2}+it,f \otimes \chi)|^{2k} \\
 & \ll \Big ( \sumstar_{\substack{ \chi \shortmod q }}|L(\tfrac{1}{2}+it,f \otimes \chi)|^2 \sum^{R}_{v=0} \Big (\prod^v_{j=1} |\mathcal{N}_j(t, \chi, k-1)|^2 \Big ) |{\mathcal
 Q}_{v+1}(t, \chi, k)|^{2}
 \Big)^{k} \\
 & \hspace*{2cm}  \times \Big ( \sumstar_{\substack{ \chi \shortmod q }}  \sum^{R}_{v=0}  \Big (\prod^v_{j=1}|\mathcal{N}_j(t, \chi, k)|^2\Big )|{\mathcal
 Q}_{v+1}(t, \chi, k)|^{2} \Big)^{1-k}.
\end{split}
\end{align*}
\end{lemma}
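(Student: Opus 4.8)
The plan is to run the Radziwi{\l\l}--Soundararajan upper bounds principle exactly as in the proof of \cite[Lemma 3.2]{GHH}; the shift $it$ is inert for this lemma, since it intervenes in the proofs of Theorems~\ref{thmlowerbound}--\ref{thmupperbound} only through Corollary~\ref{cortwistedsecondmomenttspecial}, which is invoked afterwards. The lemma will be deduced from a pointwise inequality valid for every primitive $\chi\bmod q$, followed by a single application of H\"older's inequality.

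For brevity put $\Sigma_1(\chi)=\sum_{v=0}^{R}\big(\prod_{j=1}^{v}|\mathcal{N}_j(t,\chi,k-1)|^2\big)|\mathcal{Q}_{v+1}(t,\chi,k)|^2$ and $\Sigma_2(\chi)=\sum_{v=0}^{R}\big(\prod_{j=1}^{v}|\mathcal{N}_j(t,\chi,k)|^2\big)|\mathcal{Q}_{v+1}(t,\chi,k)|^2$, so that the right-hand side of the lemma is $\big(\sumstar_{\chi\shortmod q}|L(\tfrac12+it,f\otimes\chi)|^2\Sigma_1(\chi)\big)^{k}\big(\sumstar_{\chi\shortmod q}\Sigma_2(\chi)\big)^{1-k}$. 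Given $\chi$, let $v(\chi)\in\{0,1,\dots,R\}$ be the largest index for which $|\mathcal{P}_j(t,\chi)|\le \ell_j/c_k$ holds for all $1\le j\le v(\chi)$. I would establish
\[
|L(\tfrac12+it,f\otimes\chi)|^{2k}\ll\big(|L(\tfrac12+it,f\otimes\chi)|^2\,\Sigma_1(\chi)\big)^{k}\,\Sigma_2(\chi)^{1-k},
\]
which, after dividing by $|L(\tfrac12+it,f\otimes\chi)|^{2k}$, amounts to $\Sigma_1(\chi)^{k}\Sigma_2(\chi)^{1-k}\gg1$. Keeping only the term $v=v(\chi)$ in each of $\Sigma_1(\chi)$ and $\Sigma_2(\chi)$, it suffices to prove
\[
|\mathcal{Q}_{v(\chi)+1}(t,\chi,k)|^{2}\ \prod_{j=1}^{v(\chi)}|\mathcal{N}_j(t,\chi,k-1)|^{2k}\,|\mathcal{N}_j(t,\chi,k)|^{2(1-k)}\ \gg\ 1.
\]
If $v(\chi)<R$, then by maximality $|\mathcal{P}_{v(\chi)+1}(t,\chi)|>\ell_{v(\chi)+1}/c_k$, so $|\mathcal{Q}_{v(\chi)+1}(t,\chi,k)|=|c_k\mathcal{P}_{v(\chi)+1}(t,\chi)/\ell_{v(\chi)+1}|^{r_k\ell_{v(\chi)+1}}>1$; if $v(\chi)=R$, then $\mathcal{Q}_{R+1}\equiv 1$. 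In either case the $\mathcal{Q}$-factor is $\ge1$, so it remains to bound the product over $j\le v(\chi)$ from below.

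Here the point is that the constraint $|\mathcal{P}_j(t,\chi)|\le \ell_j/c_k$ with $c_k\ge 64$ makes $E_{\ell_j}(\alpha\mathcal{P}_j(t,\chi))$ an extremely accurate truncation of $e^{\alpha\mathcal{P}_j(t,\chi)}$ for $\alpha=k-1$ and $\alpha=k$, namely $\mathcal{N}_j(t,\chi,\alpha)=e^{\alpha\mathcal{P}_j(t,\chi)}\big(1+O(e^{-c\ell_j})\big)$ for some absolute $c>0$. Since the exponential factors cancel in $|\mathcal{N}_j(t,\chi,k-1)|^{2k}|\mathcal{N}_j(t,\chi,k)|^{2(1-k)}$ — the exponents being $2k(k-1)\Re\mathcal{P}_j(t,\chi)$ and $2(1-k)k\Re\mathcal{P}_j(t,\chi)$ — this quantity is $1+O(e^{-c\ell_j})$. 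Multiplying over $j\le v(\chi)$ and using that $\sum_{j=1}^{R}e^{-c\ell_j}\ll 1$ then yields the desired lower bound; the bound on this sum holds because $\ell_j\ge 10^M$ for every $j$, while the recursion $\ell_{j+1}=2\lceil N\log\ell_j\rceil$ forces the sequence $(\ell_j)$ to grow tower-exponentially as $j$ decreases, so the sum is dominated by its $j=R$ term.

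With the pointwise inequality in hand, I would sum over all primitive $\chi\bmod q$ and apply H\"older's inequality with exponents $1/k$ and $1/(1-k)$ to $\big(|L|^2\Sigma_1(\chi)\big)^{k}\Sigma_2(\chi)^{1-k}$, obtaining
\[
\sumstar_{\chi\shortmod q}|L(\tfrac12+it,f\otimes\chi)|^{2k}\ \ll\ \Big(\sumstar_{\chi\shortmod q}|L(\tfrac12+it,f\otimes\chi)|^2\Sigma_1(\chi)\Big)^{k}\Big(\sumstar_{\chi\shortmod q}\Sigma_2(\chi)\Big)^{1-k},
\]
which is precisely the asserted inequality. The only step demanding real care is the bound for $\prod_{j\le v(\chi)}(1+O(e^{-c\ell_j}))$ uniformly in $q$, which is exactly where the choice of $M$ large relative to $N$ enters; everything else is formal manipulation of the Dirichlet polynomials $\mathcal{N}_j$ and $\mathcal{Q}_j$ and follows \cite{GHH} verbatim.
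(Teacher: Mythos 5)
Your argument is correct and takes essentially the same route as the paper, which defers to the Radziwi{\l\l}--Soundararajan upper-bound principle via \cite[Lemma 3.2]{GHH}: select the single term $v=v(\chi)$ in each of $\Sigma_1(\chi)$ and $\Sigma_2(\chi)$, use $|\mathcal{Q}_{v(\chi)+1}|\ge 1$ together with the cancellation of the exponential factors in $|\mathcal{N}_j(t,\chi,k-1)|^{2k}|\mathcal{N}_j(t,\chi,k)|^{2(1-k)}$ on the range $|\mathcal{P}_j(t,\chi)|\le \ell_j/c_k$, and conclude by H\"older. The error analysis is sound because $\sum_{j\le R}e^{-c\ell_j}$ is dominated by its $j=R$ term and is $O(e^{-c\cdot 10^{M}})$ uniformly in $q$, so the pointwise bound $\Sigma_1(\chi)^k\Sigma_2(\chi)^{1-k}\gg 1$ holds with an absolute implied constant once $M$ is fixed large.
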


Hence from Lemmas \ref{lem1} and \ref{lem2}, in order to prove Theorem \ref{thmlowerbound} and Theorem
   \ref{thmupperbound}, it suffices to establish the following three propositions.
\begin{proposition}
\label{Prop4} With the notation as above, for $k>0$,
\begin{align*}
\sumstar_{\substack{ \chi \shortmod q }}L(\tfrac{1}{2}+it, f \otimes \chi) \mathcal{N}(t, \overline{\chi}, k) \mathcal{N}(t, \chi, k-1) \gg \phis(q)(\log q)^{ k^2
} .
\end{align*}
\end{proposition}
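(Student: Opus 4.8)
We would prove Proposition~\ref{Prop4} by evaluating the first moment of $L(\tfrac12+it,f\otimes\chi)$ twisted by $\mathcal N(t,\chi,k-1)\mathcal N(-t,\overline\chi,k)$. (The case $k=0$ is trivial; for $k=1$ the argument is the same, and in any case $k=1$ is not needed for Theorem~\ref{thmlowerbound}, where it follows from Corollary~\ref{cortwistedsecondmomenttspecial}. We take the polynomial attached to $\overline\chi$ to be $\mathcal N(-t,\overline\chi,k)$, as in \eqref{basiclowerbound}.) First apply the approximate functional equation \eqref{lstapprox} to $L(\tfrac12+it,f\otimes\chi)$ with its free parameter chosen as $X=q^{\theta}$ for a small fixed $\theta>0$; by \eqref{scondition1} we may treat $|t|$ as $q^{o(1)}$, so that, by the decay \eqref{W}, the principal sum has effective length $\ll q^{1+\theta+\varepsilon}$ and the dual sum has length $\ll q^{1-\theta+\varepsilon}$. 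Then expand $\mathcal N(t,\chi,k-1)=\sum_v c_v(t)\chi(v)$ and $\mathcal N(-t,\overline\chi,k)=\sum_u b_u(t)\overline\chi(u)$ as Dirichlet polynomials supported on $u,v$ coprime to $q$ and composed only of primes from $\bigcup_j P_j$, with at most $\ell_j$ prime factors (with multiplicity) from each block $P_j$. Since the recursion $\ell_{j+1}=2\lceil N\log\ell_j\rceil$ together with $\ell_R>10^M$ makes $\sum_j 1/\ell_j$ as small as we like once $M$ is large, one has $u,v\ll q^{\theta'}$ for a fixed, arbitrarily small $\theta'$, and \eqref{lambdafbound} gives $\sum_u(|b_u|+|b_u|\sqrt u)\ll q^{\theta'}$, similarly for $c_v$. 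Finally, multiply everything out and apply the orthogonality relation \eqref{sumchistar}.

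The outcome is a main term, coming from the principal sum on the exact diagonal $nv=u$ (where \eqref{sumchistar} and \eqref{chistar} contribute exactly $\phis(q)$), plus several errors to be discarded. The remaining part of the principal sum splits into wrap-around terms, where $nv\equiv u\pmod q$ but $nv\neq u$ --- here $u,v\ll q^{\theta'}$ forces $n\gg q^{1-\theta'}$, hence a usable factor $n^{-1/2}$ --- and the other terms, where $q\nmid nv-u$ so the lower-order ($c<q$) contributions in \eqref{sumchistar} are $\ll(q,nv-u)$ and are summed using $\sum_{n\le Y}(q,nv-u)\ll Yq^{\varepsilon}$; combined with $\sum_{n\le Y}|\lambda_f(n)|n^{-1/2}\ll q^{\varepsilon}$ and the coefficient bounds above, all of this is $O(q^{1-\delta})$ for some $\delta=\delta(\theta,\theta')>0$. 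For the dual sum one uses the root number: writing $\iota_\chi=i^{\kappa}\tau(\chi)^2/q$ and $\tau(\chi)^2=\sum_{x,y\bmod q}\chi(xy)e\!\big((x+y)/q\big)$, the character sum $\sumstar_{\chi\shortmod q}\iota_\chi\,\chi(v)\overline\chi(nu)$ collapses, by \eqref{sumchistar} and additive orthogonality, to $i^{\kappa}\varphi(q)q^{-1}$ times a Kloosterman sum $S(1,\overline{nu}\,v,q)\ll q^{1/2+\varepsilon}$ (Weil's bound, \cite[Corollary~11.12]{iwakow}); since the dual sum has length $\ll q^{1-\theta+\varepsilon}$, its total contribution is $\ll q^{1/2+\theta'+\varepsilon}$. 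As $\phis(q)\gg q^{1-o(1)}$, every error term is $o(\phis(q))$.

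It thus remains to bound the main term $\phis(q)\sum_{v\mid u}b_u(t)c_v(t)\,\lambda_f(u/v)\,(u/v)^{-1/2-it}\,W_t\!\big((u/v)q^{-1-\theta}\big)$ from below; on the diagonal $u/v\ll q^{\theta'}$ we may replace $W_t(\cdot)$ by $1$ with negligible error. Because the blocks $P_j$ use disjoint prime sets and $\lambda_f$ is multiplicative, this sum factors as $\phis(q)\prod_{j=1}^R\mathcal L_j$, where $\mathcal L_j$ is the corresponding local quantity over $P_j$. Following the lower-bounds principle of Heap and Soundararajan \cite{H&Sound} and its implementation in \cite{GHH}: the Dirichlet coefficients of $L_{P_j}(\tfrac12+it,f\otimes\chi)\,\mathcal N_j(t,\chi,k-1)$ agree, up to absolutely convergent corrections governed by the Euler factorizations in \eqref{Lphichi} and \eqref{Lff}, with those of $\mathcal N_j(t,\chi,k)$; consequently $\mathcal L_j$ differs by a factor $1+o(1)$ from the manifestly nonnegative diagonal of $|\mathcal N_j(-t,\overline\chi,k)|^2$, which a direct computation evaluates as $\sum_{r\le\ell_j}(k^2S_j)^r/r!+(\text{lower order})$ with $S_j:=\sum_{p\in P_j,\,p\nmid q}\lambda_f(p)^2/p$. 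Since $S_j\ll_N\ell_j$, truncation at $r=\ell_j$ costs only an exponentially small amount, so $\mathcal L_j\gg\exp(k^2S_j)$. Finally \eqref{pcondition} permits discarding the primes dividing $q$, and $\sum_{p\le x}\lambda_f(p)^2/p=\log\log x+O(1)$ --- a consequence of \eqref{Lff} and the regularity and non-vanishing of $L(s,\operatorname{sym}^2 f)$ at $s=1$ --- gives $\sum_j S_j=\log\log q-O_k(1)$; hence $\prod_j\mathcal L_j\gg_k(\log q)^{k^2}$ and the main term is $\gg\phis(q)(\log q)^{k^2}$, which completes the proof.

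The step I expect to be the main obstacle is the last one: making rigorous the assertion that $L_{P_j}\,\mathcal N_j(t,\chi,k-1)$ and $\mathcal N_j(t,\chi,k)$ have essentially the same diagonal, i.e. carefully tracking the error caused by replacing the full Euler factor $L_{P_j}$ and the truncated exponentials $E_{\ell_j}$ by their idealized counterparts, and checking that the error accumulated over the $R$ blocks stays well below $\exp(k^2\sum_j S_j)$; this is exactly where the largeness of $N$ (hence the form of the recursion defining the $\ell_j$) is used, and it is carried out as in \cite{H&Sound,GHH}. A lesser but genuine technical point is the uniform control of the dual term, for which the decisive device is taking $X$ to be a small positive power of $q$, shortening the dual sum enough that Weil's bound by itself suffices.
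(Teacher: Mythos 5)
Your proposal follows the paper's route: expand the mollifiers as Dirichlet polynomials, apply the approximate functional equation with a free parameter $X$, use \eqref{sumchistar} to split the principal sum into a diagonal, wrap-around terms from moduli $c<q$, and a dual term, and extract the diagonal lower bound via the Heap--Soundararajan mechanism exactly as in the proof of \cite[Proposition~3.3]{GHH}, to which the paper defers; you also correctly read the mollifier attached to $\overline\chi$ as $\mathcal N(-t,\overline\chi,k)$, consistent with \eqref{basiclowerbound} and \eqref{Lfirstmoment}. Two quantitative slips are worth flagging: with a dual sum of length $q^{1-\theta}$ and Weil's $q^{1/2+\varepsilon}$ bound, the dual contribution is $\ll q^{1-\theta/2+\theta'+\varepsilon}(|t|+1)^{1/2+\varepsilon}$, not $q^{1/2+\theta'+\varepsilon}$, and $|t|$ is only constrained to be $\ll q^{1/8-\varepsilon_0}$ (resp. $q^{19\eta/10-\varepsilon_0}$), not $q^{o(1)}$ --- so a ``small fixed $\theta$'' does not by itself make the dual term $o(\phis(q))$. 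The paper instead balances the wrap-around error $\ll(|t|+1)^{1/2+\varepsilon}X^{-1/2+\varepsilon}q^{1/2+\varepsilon}$ against the dual error $\ll(|t|+1)^{1/2+\varepsilon}q^{1/2+\varepsilon}\sqrt{qX}$, i.e.\ takes the effective lengths to be about $q^{3/2}$ and $q^{1/2}$ (its $X=q^{-1/2}$), producing total error $O((|t|+1)^{1/2+\varepsilon}q^{3/4+\varepsilon})$, which is $o(\phis(q))$ in the allowed $t$-range; in your normalization the optimal choice is $\theta\asymp 1/2$, not a small power. Finally, collapsing $\sumstar_{\chi\bmod q}\iota_\chi\chi(v)\overline\chi(nu)$ to a single Kloosterman sum is exact only for prime $q$; for composite $q$ in case~\eqref{case1} a M\"obius sum over conductors intervenes, though the bound $\ll v\,q^{1/2+\varepsilon}$ that is actually needed still holds (and is precisely what \cite{GHH} supplies).
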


\begin{proposition}
\label{Prop5} With the notation as above, for $0<k < 1$,
\begin{align*}
\max \Big (  \sumstar_{\substack{ \chi \shortmod q }} & |L(\tfrac{1}{2}+it,f \otimes \chi)\mathcal{N}(t, \chi, k-1)|^2, \\
& \sumstar_{\substack{ \chi \shortmod q }}|L(\tfrac{1}{2}+it,f \otimes \chi)|^2 \sum^{R}_{v=0}\Big (\prod^v_{j=1}|\mathcal{N}_j(t, \chi, k-1)|^{2}\Big ) |{\mathcal
 Q}_{v+1}(t, \chi, k)|^2 \Big ) \ll \phis(q)(\log q)^{ k^2 }.
\end{align*}
\end{proposition}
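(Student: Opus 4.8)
The plan is to derive both mean values from the twisted second moment of Corollary~\ref{cortwistedsecondmomenttspecial}, after expanding the mollifiers into Dirichlet polynomials. Multiplying out $\mathcal{N}(t,\chi,\alpha)=\prod_{j=1}^{R}E_{\ell_j}(\alpha\,\mathcal{P}_j(t,\chi))$ one obtains $\mathcal{N}(t,\chi,\alpha)=\sum_{a}x_\alpha(a)\,a^{-1/2-it}\chi(a)$, where $x_\alpha$ is supported on integers $a$ whose prime factors all lie in $P_1\cup\cdots\cup P_R$ and which satisfy the block-wise constraint $\Omega(a_j)\le\ell_j$, with $x_\alpha(p^e)=\alpha^e\lambda_f(p)^e/e!$; in particular $|x_\alpha(a)|\le C_k^{\Omega(a)}$ and, since $\{\ell_j\}$ is decreasing with $\ell_R>10^M$ so that $\sum_j \ell_j^{-1}\ll\ell_R^{-1}$, the support obeys $a\le q^{\delta}$ with $\delta=\delta(M)\to 0$ as $M\to\infty$. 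Likewise $\prod_{j\le v}|\mathcal{N}_j(t,\chi,k-1)|^2\,|\mathcal{Q}_{v+1}(t,\chi,k)|^2$ expands as $\sum_{a,b}c_v(a,b)\,a^{-1/2-it}b^{-1/2+it}\chi(a)\overline\chi(b)$, with $c_v$ multiplicative across the (pairwise disjoint) blocks $P_j$ and with $ab\le q^{\delta}$; the restriction $(ab,q)=1$ may be imposed for free since $\chi(a)\overline\chi(b)$ vanishes otherwise. As the $v=R$ term of the second mean value reproduces the first, it suffices to bound the second. For $M$ large, $\max(a,b)\le q^{\delta}\le q^{1/4}$, and \eqref{scondition1} forces $(|\tfrac12+it|+1)^2$ to lie below the thresholds in \eqref{scondition}, so Corollary~\ref{cortwistedsecondmomenttspecial} applies to every pair $(a,b)$ occurring, and I insert \eqref{twistedsecondmomenttspecial} term by term.

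After this substitution the factors $a^{\pm it},\,b^{\pm it}$ cancel identically, so the main term is independent of $t$ and equals
\[
\phis(q)\,L(1,\operatorname{sym}^2 f)\sum_{v=0}^{R}\ \sum_{(ab,q)=1}\frac{c_v(a,b)\,H(1;q,a,b)}{ab}\Big(2\log\tfrac{q}{2\pi}-\log(ab)+O(1)\Big).
\]
I evaluate the inner double sum by completing it to an Euler product over $p\in\bigcup_j P_j$—the truncation tails $\Omega>\ell_j$ being negligible by the super-exponential decay of $1/\Omega!$—and then, using the formula \eqref{Hp} for $H_p$ and the Hecke relation $\lambda_f(p^2)=\lambda_f^2(p)-1$, expanding the local factor as $E_p=1+\tfrac{2(k-1)\lambda_f^2(p)}{p}+O(p^{-2})$. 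Here the sign $k-1<0$ is decisive, for it makes $0<E_p<1$. Invoking the Rankin--Selberg factorisation \eqref{Lff} and the simple pole of $L(s,f\times f)$ at $s=1$ (whence $\sum_{p\le X}\lambda_f^2(p)/p=\log\log X+O(1)$ and $\sum_{p\le X}\lambda_f^2(p)\log p/p=\log X+O(1)$), the product is $\asymp(\log q)^{2(k-1)}$; the $-\log(ab)$ term contributes only $O(\ell_R^{-2}\log q)$ relative to $2\log(q/2\pi)$ and is absorbed; the local factors at $p\mid q$ are $\asymp 1$ by hypothesis \eqref{pcondition}; and the factors coming from $|\mathcal{Q}_{v+1}|^2$ carry the normalisation $(c_k/\ell_{v+1})^{2r_k\ell_{v+1}}$, which renders $\sum_v(\cdots)\ll 1$ (this bookkeeping being exactly as in \cite{GHH}). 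Hence the main term is $\ll\phis(q)(\log q)^{1+2(k-1)}$, and since $2k-1\le k^2$ for $0<k<1$ this is $\ll\phis(q)(\log q)^{k^2}$.

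For the error term, each application of \eqref{twistedsecondmomenttspecial} contributes $O\big((q^2/ab)^{-1/4+\varepsilon}q(ab)^{-1/2}+q^{\varepsilon}\mathcal{R}\big)$. Summing against $|c_v(a,b)|(ab)^{-1/2}$ and using, in place of the genuine $L^1$-norm of the coefficients (which is far too large), the divisor bound $\sum_{a\le q^{\delta}}|x_\alpha(a)|a^{-\sigma}\ll\sum_{a\le q^{\delta}}d_z(a)\,a^{-\sigma}\ll q^{\delta(1-\sigma)+\varepsilon}$ for $\sigma\in\{\tfrac12,\tfrac34\}$ and $d_z$ the $z$-fold divisor function with $z=z(k)$, one finds the total error to be $\ll q^{1/2+\delta/2+\varepsilon}+q^{1-\eta/100+\delta+\varepsilon}(\log q)^{O(1)}$. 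Taking $M$ large and $\varepsilon$ small makes this $o\big(\phis(q)(\log q)^{k^2}\big)$, using $\phis(q)=\sum_{c\mid q}\mu(q/c)\varphi(c)\gg q^{1-\varepsilon}$ under \eqref{pcondition}. Combining this with the main-term bound gives Proposition~\ref{Prop5}.

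The principal obstacle is the main-term evaluation. A crude bound—replacing $c_v(a,b)$ and $H$ by their absolute values and the bracket by $O(\log q)$—only yields $\ll\phis(q)(\log q)^{3-2k}$, which for $0<k<1$ exceeds the target $(\log q)^{k^2}$; one is therefore forced to retain the signs of the mollifier coefficients throughout the Euler-product computation (equivalently, to use $0<E_p<1$), and to account honestly for the $-\log(ab)$ correction and for the Euler factors at the primes dividing $q$, where \eqref{pcondition} is indispensable. Everything else—the truncation estimates, the disjoint-block combinatorics for the $v$-sum and the $\mathcal{Q}$-factors, and the divisor bounds in the error term—is routine and parallels \cite{GHH}.
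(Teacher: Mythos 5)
Your overall architecture---expand the mollifier as a Dirichlet polynomial, apply Corollary~\ref{cortwistedsecondmomenttspecial} term by term, observe that the factors $a^{\pm it}, b^{\pm it}$ cancel against the Corollary's main term so the result is $t$-independent, evaluate the resulting diagonal sum as an Euler product, and sum the error trivially using the divisor bounds---is exactly the route the paper takes (following the proof of \cite[Proposition 3.4]{GHH}). However, there is a concrete computational error in your main-term Euler factor that you should fix.

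You claim the local factor is $E_p = 1 + \frac{2(k-1)\lambda_f^2(p)}{p} + O(p^{-2})$. This omits a leading-order term. At a prime $p$ in a block $P_j$ with $j\le v$, the $p$-part of the diagonal sum $\sum_{a,b}\frac{u_a u_b\,(a,b)}{ab}\,H(1;q,a/(a,b),b/(a,b))$ is
\[
\sum_{i,j\ge 0}\frac{u(p^i)u(p^j)\,p^{\min(i,j)}}{p^{i+j}}\,H_p\bigl(1;q,p^{|i-j|},1\bigr),
\qquad u(p^i)=\frac{(k-1)^i\lambda_f(p)^i}{i!},
\]
and since $H_p(1;q,p,1)=\lambda_f(p)+O(p^{-1})$ while $H_p(1;q,1,1)=1$, the contributions of order $p^{-1}$ come from $(i,j)=(1,0),(0,1)$ \emph{and} $(1,1)$:
\[
E_p = 1 + \underbrace{\frac{2(k-1)\lambda_f^2(p)}{p}}_{(1,0)\text{ and }(0,1)} + \underbrace{\frac{(k-1)^2\lambda_f^2(p)}{p}}_{(1,1)} + O(p^{-2}) = 1 + \frac{(k^2-1)\lambda_f^2(p)}{p} + O(p^{-2}).
\]
You dropped the $(k-1)^2$ term (the pure mollifier-squared piece, with $H_p=1$), which is of the same order as the cross terms. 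Consequently $\prod_p E_p \asymp (\log q)^{k^2-1}$, not $(\log q)^{2(k-1)}$; since $2(k-1)<k^2-1$ for $k\ne 1$, your claimed asymptotic is strictly smaller than the truth, so the displayed ``$\asymp$'' is false and the subsequent inequality chain (``Euler product $\ll (\log q)^{2(k-1)}$, hence main term $\ll (\log q)^{2k-1}\le (\log q)^{k^2}$'') is not established as written. With the corrected $E_p$ the Euler product is $\asymp(\log q)^{k^2-1}$ and, after absorbing the $2\log(q/2\pi)$ bracket, the main term is $\asymp\phis(q)(\log q)^{k^2}$ \emph{exactly}, without any need for the $2k-1\le k^2$ patch---that step is an artifact of the omission. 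Your remaining bookkeeping (the $-\log(ab)$ correction is $O(\ell_R^{-2}\log q)$ and absorbed, the normalization $(c_k/\ell_{v+1})^{2r_k\ell_{v+1}}$ rendering $\sum_v e^{-\ell_{v+1}/2}\ll 1$, the error-term estimate, and the observation that a crude absolute-value bound overshoots) is sound and parallels the paper.
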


\begin{proposition}
\label{Prop6} With the notation as above, for $k>0$,
\begin{align*}
\max \Big ( \sumstar_{\substack{ \chi \shortmod q }}\prod^R_{j=1}\big ( |{\mathcal N}_j(t,\chi, k)|^2+ |{\mathcal Q}_j(t,\chi,k)|^2 \big ),  \sumstar_{\substack{ \chi \shortmod q }} \sum^{R}_{v=0} \Big ( \prod^v_{j=1}|\mathcal{N}_j(t,\chi, k)|^{2}\Big )|{\mathcal
 Q}_{v+1}(t,\chi, k)|^2 \Big )   \ll \phis(q)(\log q)^{ k^2 }.
\end{align*}
\end{proposition}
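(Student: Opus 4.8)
The plan is to reduce, via character orthogonality, both quantities in Proposition \ref{Prop6} to $\phis(q)$ times a sum of squares of Dirichlet coefficients, and then to evaluate that sum as an Euler product over the disjoint prime ranges $P_j$. Concretely, I would start from \eqref{sumchistar}: for any Dirichlet polynomial $D(\chi)=\sum_m b_m\chi(m)$ supported on $(m,q)=1$,
\[
\sumstar_{\chi\bmod q}|D(\chi)|^2=\sum_{c\mid q}\mu(q/c)\phi(c)\sum_{\substack{m\equiv n\,(c)\\(mn,q)=1}}b_m\overline{b_n},
\]
whose $m=n$ diagonal contributes $\phis(q)\sum_{(m,q)=1}|b_m|^2$ by \eqref{chistar}, and whose remaining part is $\ll L^2\sum_m|b_m|^2$ when $D$ has length $\le L$, since $\bigl|\sum_{c\mid(q,m-n)}\mu(q/c)\phi(c)\bigr|\le\sum_{c\mid(q,m-n)}\phi(c)=\gcd(q,m-n)\le|m-n|$. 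The point is that every Dirichlet polynomial in the statement has length $q^{o(1)}$: $\mathcal N_j(t,\chi,\cdot)$ is supported on integers $\le q^{1/\ell_j}$, $\mathcal Q_j(t,\chi,k)$ on integers $\le q^{r_k/\ell_j}$, and the recursion $\ell_{j+1}=2\lceil N\log\ell_j\rceil$ with $\ell_R>10^M\ge\ell_{R+1}$ makes $\ell_{j-1}$ exponentially larger than $\ell_j$, so (for suitable $N$) $\ell_{j-1}/\ell_j\ge2$, whence $\sum_{j=1}^R\ell_j^{-1}\ll10^{-M}$. Thus $L\le q^{O(r_k/10^M)}$, the off-diagonal is $\ll\phis(q)\sum_m|b_m|^2$ for $M$ large (using $\phis(q)\gg q^{1-o(1)}$), and the task is reduced to estimating sums $\sum_m|b_m|^2$.

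Next I would exploit that the $P_j$ are pairwise disjoint — and primes dividing $q$ contribute nothing since $\chi(p)=0$ — so that a product $\prod_j F_j(\chi)$ with $F_j$ supported on integers built from primes of $P_j$ satisfies $\sum_n\bigl|(\prod_j F_j)_n\bigr|^2=\prod_j\sum_a|(F_j)_a|^2$; it then remains to bound the local factors. Expanding $\mathcal N_j=E_{\ell_j}(k\mathcal P_j)$ and using $|\lambda_f(p)p^{-1/2-it}|^2=\lambda_f^2(p)/p$ (so the argument is insensitive to $t$ and matches the $t=0$ case of \cite{GHH}) together with \eqref{lambdafbound}, I expect
\[
\sum_a|(\mathcal N_j)_a|^2\le\exp\Bigl(k^2\sum_{p\in P_j}\frac{\lambda_f^2(p)}{p}+O\Bigl(\sum_{p\in P_j}\frac1{p^2}\Bigr)\Bigr),
\]
the truncation of $E_{\ell_j}$ at degree $\ell_j$ being harmless because $\sum_{p\in P_j}\lambda_f^2(p)/p\ll\ell_j/N$ lies well below $\ell_j$. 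For $\mathcal Q_j=(c_k\mathcal P_j/\ell_j)^{r_k\ell_j}$ the same expansion should give
\[
\sum_a|(\mathcal Q_j)_a|^2\le\Bigl(\frac{c_k}{\ell_j}\Bigr)^{2r_k\ell_j}(r_k\ell_j)!\Bigl(\sum_{p\in P_j}\frac{\lambda_f^2(p)}{p}\Bigr)^{r_k\ell_j}\le\Bigl(\frac{Cc_k^2r_k}{N}\Bigr)^{r_k\ell_j}\le 2^{-r_k\ell_j}
\]
once $N$ is large in terms of $k$; this smallness is exactly the design purpose of the $\mathcal Q_j$.

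Multiplying these over $1\le j\le R$, the first quantity in Proposition \ref{Prop6} becomes
\[
\ll\phis(q)\prod_{j=1}^R\Bigl(\sum_a|(\mathcal N_j)_a|^2+\sum_a|(\mathcal Q_j)_a|^2\Bigr)\ll\phis(q)\exp\Bigl(k^2\sum_{\substack{p\le q^{1/\ell_R^2}\\p\nmid q}}\frac{\lambda_f^2(p)}{p}\Bigr)\prod_{j=1}^R\bigl(1+2^{-r_k\ell_j}\bigr),
\]
and I would finish using $\sum_{p\le x}\lambda_f^2(p)/p=\log\log x+O(1)$, which follows from \eqref{Lff} and the fact that $L(s,f\times f)=\zeta(s)L(s,\operatorname{sym}^2 f)$ has a simple pole at $s=1$ (so $L(1,\operatorname{sym}^2 f)$ is finite and non-zero): hypothesis \eqref{pcondition} discards the primes dividing $q$ at the cost of $O(1)$ (note $q^{1/\ell_R^2}\le q^{\varepsilon}$ for $M$ large), and passing from $x$ to $q^{1/\ell_R^2}$ with $\ell_R$ bounded costs a further $O(1)$, so the exponential is $\ll(\log q)^{k^2}$ while $\prod_j(1+2^{-r_k\ell_j})\ll1$. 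For the second quantity I would write $\prod_{j\le v}|\mathcal N_j|^2|\mathcal Q_{v+1}|^2=|\mathcal E_v(\chi)|^2$ with $\mathcal E_v$ again of length $q^{o(1)}$, obtain $\sumstar_\chi|\mathcal E_v|^2\ll\phis(q)(\log q)^{k^2}2^{-r_k\ell_{v+1}}$ for $v<R$ and $\ll\phis(q)(\log q)^{k^2}$ for $v=R$ (where $\mathcal Q_{R+1}=1$), and sum the convergent series over $v$. I expect the main obstacle to be the verification that the recursive choice of the $\ell_j$ collapses the iterated logarithms fast enough to keep $\sum_j\ell_j^{-1}\ll10^{-M}$, so that the off-diagonal in \eqref{sumchistar} is genuinely negligible, together with the combinatorial bookkeeping simultaneously controlling the $E_{\ell_j}$-truncation error and the summability of the tails $\mathcal Q_j$; this is where the required sizes of $N$ and $M$ in terms of $k$ get pinned down, and it is carried out for $t=0$ in \cite{GHH} following \cite{H&Sound} and \cite{Radziwill&Sound}.
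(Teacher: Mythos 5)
The paper itself omits the proof of Proposition \ref{Prop6}, stating only that it is ``similar to that of \cite[Proposition 3.5]{GHH}'' (which in turn follows the Radziwi{\l\l}--Soundararajan and Heap--Soundararajan template); your reconstruction is exactly that standard argument — orthogonality of primitive characters with the off-diagonal controlled via $\gcd(q,m-n)\le|m-n|\le L$, factorization of $\sum_n|b_n|^2$ over the disjoint prime blocks $P_j$, the local bounds $\sum_a|(\mathcal N_j)_a|^2\le\exp(k^2\sum_{p\in P_j}\lambda_f^2(p)/p)$ and $\sum_a|(\mathcal Q_j)_a|^2\le(c_k/\ell_j)^{2r_k\ell_j}(r_k\ell_j)!(\sum_{p\in P_j}\lambda_f^2(p)/p)^{r_k\ell_j}\ll2^{-r_k\ell_j}$, and Mertens-type evaluation together with hypothesis \eqref{pcondition} to discard primes dividing $q$ — and it is correct. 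This is essentially the same approach as the cited source, with no meaningful deviation.
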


As the proof of Proposition \ref{Prop6} is similar to that of \cite[Proposition 3.5]{GHH}, we omit it here and focus on Propositions \ref{Prop4} and \ref{Prop5} in what follows. 

\subsection{Proof of Proposition \ref{Prop4}}
\label{sec 4}

We proceed in a way similar to the proof of \cite[Proposition 3.3]{GHH}.  Upon taking $M$ large enough, we may write for simplicity that
\begin{align*}
 {\mathcal N}(t, \chi, k-1)= \sum_{a  \leq q^{2/10^{M}}} \frac{x_a}{a^{1/2+it}} \chi(a) \quad \mbox{and} \quad \mathcal{N}(-t, \overline{\chi}, k) = \sum_{b  \leq
 q^{2/10^{M}}} \frac{y_b}{b^{1/2-it}}\overline{\chi}(b),
\end{align*}
    where for any $\varepsilon>0$,
\begin{align}
\label{xybounds}
 x_a, y_b \ll q^{\varepsilon}.
\end{align}
 From, the approximate functional equation given in Lemma \ref{PropDirpoly}, emerges the equality
\begin{align}
\label{Lfirstmoment}
\begin{split}
\sumstar_{\substack{ \chi \shortmod q }} & L(\tfrac{1}{2}+it,f \otimes \chi) \mathcal{N}(t, \chi, k-1)\mathcal{N}(-t,\overline{\chi}, k) \\
&=  \sumstar_{\substack{ \chi \shortmod q }}\sum_{m}\frac {\lambda_f(m)\chi(m)}{m^{1/2+it}}\mathcal{N}(t, \chi,
k-1)\mathcal{N}(-t\overline{\chi}, k)  W_t\left(\frac {mX}{q}\right) \\
& \hspace*{2cm} +\sumstar_{\substack{ \chi \shortmod q }}\iota_{\chi}\frac {(2\pi)^{2it}}{q^{2it}}\frac {\Gamma(\frac{\kappa}{2}-it)}{\Gamma(\frac{\kappa}{2}+it)}\sum_{m}\frac {\lambda_f(m)\overline\chi(m)}{m^{1/2-it}}\mathcal{N}(t, \chi,
k-1)\mathcal{N}(-t,\overline{\chi}, k)  W_{-t}\left(\frac {m}{qX}\right).
\end{split}
\end{align}

Now \eqref{sumchistar} gives that the right-hand side of \eqref{Lfirstmoment} is equal to
\begin{align*}
 \sum_{c | q}\mu\Big( \frac{q}{c} \Big) \phi(c) &  \sum_{(a,q)=1} \sum_{(b,q)=1}  \sum_{\substack{\substack{ (m,q)=1 \\ am \equiv b \bmod c}}}\frac {\lambda_f(m) x_a y_b}{(am)^{1/2+it}b^{1/2-it}}W_t\left(\frac {mX}{q}\right) \\
 & +\frac {(2\pi)^{2it}}{q^{2it}}\frac {\Gamma(\frac{\kappa}{2}-it)}{\Gamma(\frac{\kappa}{2}+it)}\sum_{a} \sum_{b} \sum_{\substack{m}}\frac {\lambda_f(m)x_ay_b}{a^{1/2+it}(bm)^{1/2-it}}W_{-t}\left(\frac {m}{qX}\right)\sumstar_{\substack{ \chi \shortmod q }}\iota_{\chi}\chi(a)\overline \chi(mb).
\end{align*}

  Note that as shown in the proof of \cite[Proposition 3.3]{GHH},
\begin{align*}
 \sumstar_{\substack{ \chi \shortmod q }}\iota_{\chi}\chi(a)\overline \chi(mb) \leq \frac {1}{q}\sum_{c | q}\phi(c)\frac {q(a,c)}{c} d(q)q^{1/2} \ll aq^{\half+\varepsilon}.
\end{align*}

Now Stirling's formula (see \cite[(5.113)]{iwakow}) reveals that
\begin{align*}
  \frac {\Gamma(\frac{\kappa}{2}-it)}{\Gamma(\frac{\kappa}{2}+it)} \ll 1.
\end{align*}

Utilizing the above, we get
\begin{align}
\label{Firstmomentsum2}
\begin{split}
\frac {(2\pi)^{2it}}{q^{2it}}\frac {\Gamma(\frac{\kappa}{2}-it)}{\Gamma(\frac{\kappa}{2}+it)} & \sum_{a} \sum_{b} \sum_{\substack{m}}\frac {\lambda_f(m)x_ay_b}{a^{1/2+it}(bm)^{1/2-it}}W_{-t}\left(\frac {m}{qX}\right)\sumstar_{\substack{ \chi \shortmod q }}\iota_{\chi}\chi(a)\overline \chi(mb). \\
& \ll q^{1/2+\varepsilon}X^{\varepsilon} \sum_{a} \sum_{b} \sum_{\substack{m}}\frac {a}{\sqrt{abm}}W_{-t}\left(\frac {m}{qX}\right)  \ll (|t|+1)^{1/2+\varepsilon}q^{4/10^{M}}q^{1/2+\varepsilon}X^{\varepsilon}\sqrt{qX},
\end{split}
\end{align}
  where the last bound follows by noting that due to the rapid decay of $\Wf_{-t}(x)$ given in \eqref{W}, we may take that $m \leq (qX(|t|+1))^{1+\varepsilon}$ in the summations over $m$ above. \newline

  It remains to evaluate
\begin{align}
\label{Firstmomentsum1}
 \sum_{c | q}\mu \Big( \frac{q}{c} \Big) \phi(c) \sum_{(a,q)=1} \sum_{(b,q)=1}  \sum_{\substack{\substack{ (m,q)=1 \\ am \equiv b \bmod c}}}\frac {\lambda_f(m) x_a y_b}{(am)^{1/2+it}b^{1/2-it}}W_t\left(\frac {mX}{q}\right) .
\end{align}

  We first consider the contribution from the terms $am=b+l c$ with $l \geq 1$ above. 
  Again the rapid decay of $W_t(x)$ (see \eqref{W}) enables us to restrict $m$ to $m \leq  (|t|+1)^{1+\varepsilon}(q/X)^{1+\varepsilon}$ and
this translates to $l \leq  (|t|+1)^{1+\varepsilon}q^{1+2/10^{M}+\varepsilon}/(Xc)$.  Also note that $am \geq lc$ so that we deduce together with \eqref{xybounds} that the total contribution from these terms is
\begin{align}
\label{Firstmomentsum1nondiag1}
 \ll &  \sum_{c | q} \phi(c)  q^{\varepsilon}X^{\varepsilon} \sum_{b  \leq q^{2/10^{M}}}  \sum_{l \leq (|t|+1)^{1+\varepsilon}q^{1+2/10^{M}+\varepsilon}/(Xc)}\frac {d(b+l c)}{\sqrt{blc}}
\ll (|t|+1)^{1/2+\varepsilon}X^{-1/2+\varepsilon}q^{1/2+2/10^{M}+\varepsilon}.
\end{align}

  Similarly, the contribution from the terms $b=am+l c$ with $l \geq 1$ in \eqref{Firstmomentsum1} is (by noting that
$l \leq  q^{2/10^{M}}/c$ and $b \geq lc$)
\begin{align}
\label{Firstmomentsum1nondiag2}
 \ll &  \sum_{c | q} \phi(c)  q^{\varepsilon}X^{\varepsilon} \sum_{a  \leq q^{2/10^{M}}}\sum_{m \leq (|t|+1)^{1+\varepsilon}(q/X)^{1+\varepsilon}}
  \sum_{l \leq q^{2/10^{M}}/c}\frac {1}{\sqrt{amlc}}
\ll (|t|+1)^{1/2+\varepsilon}X^{-1/2+\varepsilon}q^{1/2+2/10^{M}+\varepsilon}.
\end{align}

Now setting $X=q^{-1/2}$ to see from \eqref{Lfirstmoment}, \eqref{Firstmomentsum2}, \eqref{Firstmomentsum1nondiag1} and \eqref{Firstmomentsum1nondiag2}, we arrive at
\begin{equation} \label{Aestmation}
\begin{split}
\sumstar_{\substack{ \chi \shortmod q }} L(\tfrac{1}{2}+it, & f \otimes \chi) \mathcal{N}(t, \chi, k-1)\mathcal{N}(-t, \overline{\chi}, k)  \\
\gg & \phis(q)  \sum_{(a,q)=1} \sum_{(b,q)=1}  \sum_{\substack{ (m,q)=1 \\ m \leq (|t|+1)^{1+\varepsilon}(q/X)^{1+\varepsilon} \\ am = b }}
\frac {\lambda_f(m) x_a y_b}{\sqrt{abm}}+O((|t|+1)^{1/2+\varepsilon}q^{3/4+\varepsilon})\\
=& \phis(q) \sum_{(b,q)=1}  \frac {y_b}{b} \sum_{\substack{a, m \\ am = b }}\lambda_f(m)x_a+O((|t|+1)^{1/2+\varepsilon}q^{3/4+\varepsilon}),
\end{split}
\end{equation}
where the last equality above follows from the observation that $b \leq q^{2/10^{M}}<(|t|+1)^{1+\varepsilon}(q/X)^{1+\varepsilon}$. \newline

Note that the first term in the last expression of \eqref{Aestmation} is independent of $t$.  Proceeding as in the proof of \cite[Proposition 3.3]{GHH},  the last expression in \eqref{Aestmation} is $\gg \phis(q)(\log q)^{k^2}$ for $|t| \leq q^{1/8-\varepsilon_0}$.  This completes the proof of the proposition.

\subsection{Proof of Proposition \ref{Prop5}}
\label{sec 5}

  As the arguments are analogue, it suffices to show that
\begin{align}
\label{sumLsquareNQ} 
\sum^{R}_{v=0} \ \sumstar_{\substack{ \chi \shortmod q }}|L(\tfrac{1}{2}+it,f \otimes \chi)|^2 \Big (\prod^v_{j=1}|\mathcal{N}_j(t, \chi, k-1)|^{2}\Big ) |{\mathcal
 Q}_{v+1}(t, \chi, k)|^2  \ll \phis(q)(\log q)^{ k^2 }.
\end{align}

  We then argue as in the proof of \cite[Proposition 3.4]{GHH} to see that we may write for simplicity 
\begin{align}
\label{Lsquarepoly} 
 \Big (\prod^v_{j=1}|\mathcal{N}_j(t, \chi, k-1)|^2 \Big )|{\mathcal
 Q}_{v+1}(t,\chi, k)|^{2} = \Big( \frac{c_k  }{\ell_{v+1}}\Big)^{2r_k\ell_{v+1}}((r_k\ell_{v+1})!)^2 \sum_{\substack{a,b \leq q^{2r_k/10^{M}} \\ (ab, q)=1}} \frac{u_a u_b}{a^{1/2+it}b^{1/2-it}}\chi(a)\overline{\chi}(b),
\end{align}
 where
\begin{align}
\label{ubounds}
 \Big( \frac {c_k }{\ell_{v+1}}\Big)^{2r_k\ell_{v+1}}((r_k\ell_{v+1})!)^2, u_a, u_b \ll q^{\varepsilon}.
\end{align}
 Here we note that we may restrict the sums over $a,b$ in \eqref{Lsquarepoly} to be over those satisfying $(ab, q)=1$ for otherwise we have $\chi(a)=0$ or $\chi(b)=0$. \newline

   Upon writing $a=(a,b)\cdot a/(a,b), b=(a,b)\cdot b/(a,b)$, we see that $\chi(a) \overline{\chi}(b)=\chi(a/(a,b)) \overline{\chi}(b/(a,b))$.  Note that $(a/(a,b), b/(a,b))=1$.  We further take $M$ large enough so that $a, b \leq q^{2r_k/10^{M}} \leq q^{1/4}$.  We are therefore able to apply \eqref{twistedsecondmomenttspecial} to evaluate the inner sum on the right-hand side of \eqref{Lsquarepoly}.  This leads to
\begin{align}
\label{LNsquaresum}
\begin{split}
\sumstar_{\chi \shortmod q} |L&(\half + it,  f \otimes \chi)|^2\Big (\prod^v_{j=1}|\mathcal{N}_j(t, \chi, k-1)|^2 \Big )|{\mathcal
 Q}_{v+1}(t, \chi, k)|^{2} \\
&= \Big( \frac{c_k  }{\ell_{v+1}}\Big)^{2r_k\ell_{v+1}}((r_k\ell_{v+1})!)^2 \sum_{\substack{a,b \leq q^{2r_k/10^{M}} \\ (ab, q)=1}}\frac{u_a u_b}{a^{1/2+it}b^{1/2-it}} {\sumstar_{\chi \shortmod q}} |L(\half+it, f \otimes \chi)|^2\chi\Big( \frac {a}{(a,b)} \Big) \overline{\chi} \Big(\frac {b}{(a,b)}\Big) \\
&=   \phis(q)\Big( \frac {c_k }{\ell_{v+1}}\Big)^{2r_k\ell_{v+1}}((r_k\ell_{v+1})!)^2\sum_{\substack{a,b \leq q^{2r_k/10^{M}} \\ (ab, q)=1}}\frac{u_a u_b(a,b)}{\sqrt{ab}} L(1, \operatorname{sym}^2 f)H\Big(1; q, \frac {a}{(a,b)},\frac {b}{(a,b)}\Big) \\
& \hspace*{1cm} \times \Big (2\log (\frac{q}{2\pi})+2L'(1, \operatorname{sym}^2 f)+2H'\Big(1; q, \frac {a}{(a,b)},\frac {b}{(a,b)}\Big) \\
& \hspace*{4cm} + \frac {\Gamma'(\frac{\kappa}{2}+it_1)}{\Gamma(\frac{\kappa}{2}+it_1)}+\frac {\Gamma'(\frac{\kappa}{2}-it_1)}{\Gamma(\frac{\kappa}{2}-it_1)}-\log \Big(\frac {ab}{(a,b)^2}\Big) \Big )\\
& \hspace*{1cm} +O\Big( \Big( \frac{c_k  }{\ell_{v+1}}\Big)^{2r_k\ell_{v+1}}((r_k\ell_{v+1})!)^2 \sum_{\substack{a,b \leq q^{2r_k/10^{M}} \\ (ab, q)=1}}\frac{u_a u_b}{\sqrt{ab}} \Big( \Big(\frac {q^2(a,b)^2}{ab}\Big)^{-1/4+\varepsilon} \frac{q(a,b)}{\sqrt{ab}}+ q^{\varepsilon}\mathcal{R} \Big)\Big ).
\end{split}
\end{align}

  Applying \eqref{ubounds} and summing trivially, we see that upon taking $M$ large enough, 
  the error term in the last expression of \eqref{LNsquaresum} is $\ll  q^{1-\varepsilon}$. 
  Note that the main term in the last expression of \eqref{LNsquaresum} is again independent of $t$.  Proceeding as in the proof of \cite[Proposition 3.4]{GHH} yields 
\begin{align*}
\begin{split}
 \sumstar_{\substack{ \chi \shortmod q }}|L(\tfrac{1}{2}+it,f \otimes \chi)|^2 \Big (\prod^v_{j=1}|\mathcal{N}_j(t, \chi, k-1)|^2 \Big )|{\mathcal
 Q}_{v+1}(t, \chi, k)|^{2}
 \ll & \phis(q)e^{-\ell_{v+1}/2}(\log q)^{k^2}.
\end{split}
\end{align*}
 As the sum over $e^{-\ell_j/2}$ converges, we deduce \eqref{sumLsquareNQ} readily from the above, completing the proof of the proposition.

\vspace*{.5cm}

\noindent{\bf Acknowledgments.} P. G. is supported in part by NSFC grant 12471003 and L. Z. by the FRG Grant PS75536 at the University of New South Wales.

\bibliography{biblio}
\bibliographystyle{amsxport}

\end{document}